
\documentclass[11pt,onecoulme]{article}
\usepackage{amsfonts}
\usepackage{mathrsfs}
\usepackage{amssymb}
\usepackage{color, amsmath,amssymb, amsfonts, amstext,amsthm, latexsym}

\usepackage{amssymb, epsfig, amssymb, latexsym}
\usepackage{amsmath}
\usepackage{graphicx}
\usepackage{longtable}

%
\textwidth  165mm \textheight  230mm \topmargin  -10mm
\oddsidemargin  -2mm

\numberwithin{equation}{section}

\allowdisplaybreaks

\newtheorem{definition}{Definition}[section]
\newtheorem{theorem}[definition]{Theorem}
\newtheorem{lemma}[definition]{Lemma}
\newtheorem{remark}[definition]{Remark}

\newtheorem{hyp}[definition]{Hypothesis}

  \renewcommand\appendix{\par
    \setcounter{section}{0}
    \setcounter{subsection}{0}
    \gdef\thesection{ Appendix \Alph{section}}}
\title{
                 Viscosity Solutions to Second Order Elliptic Hamilton-Jacobi-Bellman Equation with infinite delay
}
\date{}
\author{
Jianjun Zhou \thanks{ College of Science,
             Northwest A\&F University, Yangling 712100, Shaanxi, P. R.
             China. Partially supported by  the National Natural Science Foundation of China  (Grant No. 11401474),  the  Natural Science Foundation of Shaanxi Province (Grant No. 2021JM-083) and the Fundamental Research Funds for the Central Universities (Grant No. 2452019075, 2452021063). Email: zhoujianjun@nwsuaf.edu.cn}
                }
\begin{document}
\maketitle
\par\noindent
{\bf Abstract}

\par
This paper introduces a notion of viscosity solutions  for second order elliptic Hamilton-Jacobi-Bellman (HJB) equations with infinite delay associated with infinite-horizon optimal control problems for stochastic differential equations with infinite delay.
We identify the value functional of optimal control problems as  unique viscosity solution to  associated second order elliptic HJB equation with infinite delay. We also show that our notion of viscosity solutions is consistent with the corresponding notion of classical solutions, and satisfies a stability property.

\par\vskip3mm
\par\noindent
{\bf Keywords}: Second order elliptic Hamilton-Jacobi-Bellman equations; Infinite delay; Viscosity solutions; Optimal control;
                 Stochastic differential equations
                \par \vskip2mm
                 \par\noindent{\bf AMS subject classifications.} 93C23; 93E20; 60H30; 49L20; 49L25.
\section{ Introduction}

      Let $\{W(t),t\geq0\}$ be an $n$-dimensional
      standard Wiener process on a  complete probability  space $(\Omega,{\cal {F}}, P)$, and $\{{\mathcal {F}}_{s}\}_{s\geq 0}$  its natural filtration, augmented with the family ${\cal{N}}$ of $P$-null of $\cal{F}$. The process  $u(\cdot)=(u(s))_{s\in [0,\infty)}$ is  ${\mathcal {F}}_{s}$-progressively measurable  and take values in some Polish space $(U,d)$ (subsequently called  $u(\cdot)\in {\cal{U}}_0$).
      ${\cal{C}}_0$  is the totality of all
continuous $\mathbb{R}^d$-valued functions defined on  $(-\infty,0]$ with $\lim_{\theta\rightarrow-\infty}y(\theta)=0$.
    Define a   norm on ${\cal{C}}_0$  as follows:
\begin{eqnarray*}
   |x|_C:=\sup_{-\infty< \theta\leq 0}|x(\theta)|, \ \  \ x\in {\cal{C}}_0.
\end{eqnarray*}
 We assume the coefficients $b:{\cal{C}}_0\times U\rightarrow \mathbb{R}^d$ and  $\sigma:{\cal{C}}_0\times U\rightarrow \mathbb{R}^{d\times n}$   satisfy  Lipschitz condition under $|\cdot|_C$
                         with respect to  the continuous function.

            Let us  consider a controlled  stochastic  differential
                 equation with infinite delay:
\begin{eqnarray}\label{state1}
            \begin{cases}
            dX^{x,u}(s)=
            b(X^{x,u}_s,u(s))ds+\sigma(X^{x,u}_s,u(s))dW(s),  \ \ s>0,\\
            ~~~~~X^{x,u}_0=x\in {\cal{C}}_0,
\end{cases}
\end{eqnarray}
        where
$$
        X^{x,u}_s(\theta)=X^{x,u}(s+\theta),\ \theta\in(-\infty,0].
$$
In the above equation,   the unknown $X^{x,u}(s)$, representing the state of the system, is an $\mathbb{R}^d$-valued process.
    \par
              One tries to minimize a utility  functional of the form:
\begin{eqnarray}\label{cost1}
                     J(x,u(\cdot))=\mathbb{E}\int_{0}^{\infty}e^{-\lambda s}q(X^{x,u}_s,u(s))ds, \  \ x\in {\cal{C}}_0,
\end{eqnarray}
over  ${\cal{U}}_0$.
            Here, the term $q$ is a given real  function on ${\cal{C}}_0\times U$, and the constant $\lambda$ is sufficiently large. 
                           We   introduce  value functional of the  optimal
                  control problem as follows:
\begin{eqnarray}\label{eq3}
                  V(x):=\inf_{u(\cdot)\in{{\mathcal
                  {U}}_0}}J(x,u(\cdot)), \ \ x\in {\cal{C}}_0.
\end{eqnarray}
 We aim at characterizing this value functional $V$.   We   consider the following second order elliptic Hamilton-Jacobi-Bellman
                     (HJB) equation with infinite delay:
  \begin{eqnarray}\label{hjb1}
-\lambda V(x)+\partial_tV(x)+{\mathbf{H}}(x,\partial_xV(x),\partial_{xx}V(x))= 0,\ \ \  x\in  {\cal{C}}_0,
\end{eqnarray}
      where
\begin{eqnarray*}
                                {\mathbf{H}}(x,p,l)=\inf_{u\in{
                                         {U}}}[
                        (p,b(x,u))_{\mathbb{R}^d}+\frac{1}{2}\mbox{tr}[ l \sigma(x,u)\sigma^\top(x,u)]
                        +q(x,u)],  \ (x,p,l)\in  {\cal{C}}_0\times \mathbb{R}^d\times {\cal{S}}(\mathbb{R}^d).
\end{eqnarray*}
Here,   $\sigma^\top$ is the transpose of the matrix $\sigma$,  ${\cal{S}}(\mathbb{R}^d)$  the set of all $(d\times d)$ symmetric matrices, $(\cdot,\cdot)_{\mathbb{R}^d}$ the scalar product of $\mathbb{R}^d$, and
                         $\partial_t$, $\partial_{x}$ and $\partial_{xx}$ the so-called pathwise  (or functional or Dupire;
                          see \cite{dupire1, cotn0, cotn1})  derivatives, where $ \partial_t$  is known as horizontal derivative,
                          while
 $\partial_x$  and $\partial_{xx}$ are first and  second order vertical derivatives, respectively. 
\par
             The type of problem described above arises in many  fields (for an overview on
their applications see  Kolmanovskii and Shaikhet \cite{kol}). We refer to Elsanousi,  ${\O}$ksendal and Sulem \cite{els}, {\O}ksendal and  Sulem  \cite{oks} and {\O}ksendal,  Sulem and Zhang \cite{oks1}
for applications to   mathematical finance, to  Gozzi and Marinell \cite{gozz1} and Gozzi,  Marinelli and Savin \cite{gozz2} for  advertising models with
              delayed  effects and  to Federico \cite{fed1} and Gabay and Grasselli \cite{gab} for  Pension funds.
              \par
             Crandall and Lions \cite{cra1} first
           introduced the notion of viscosity solutions for first order HJB equations in finite dimension, and later Lions \cite{liojia,liojia1,lio} extended the notion to  second order case.
            Reader can also refer to the survey paper of Crandall, Ishii and  Lions \cite{cran2} and  the monographs of Fleming
      and Soner \cite{fle1} and Yong and Zhou \cite{yong} for a detailed account
for the theory of viscosity solutions. For viscosity solutions of second order HJB equations in infinite dimensional space, we refer to
   Fabbri, Gozzi and  \'{S}wi\c{e}ch \cite{fab1},  Gozzi,  Rouy and \'{S}wi\c{e}ch \cite{gozz3},  Lions \cite{lio1, lio2, lio3}  and \'{S}wi\c{e}ch \cite{swi,swi1}. One of the structural assumption is
that the state space has to be a Hilbert space or certain Banach
space with smooth norm, not including the continuous function space.  For the delay  case, the theory of viscosity solutions is more difficult.
  Our paper \cite{zhou3} studied  a class of infinite-horizon optimal control problems for stochastic differential equations with finite delay, in which the term
              $X(\cdot-\tau)$ 
              only occurs in the drift term in the state equation.
 \par
                         In this paper, we want to develop a concept of viscosity solutions to second order elliptic
                         HJB equations with infinite delay. 
We adopt the natural generalization of the well-known Crandall-Lions definition  in terms of test functions and then show that the value functional
                         $V$  defined in  (\ref{eq3}) is the  unique viscosity solution to HJB
                         equation  given in  (\ref{hjb1}) when  coefficients $b,\sigma$ and $q$ only satisfy Lipschitz conditions under $|\cdot|_C$.
 \par
          The standard approach to the treatment of  delay optimal control problems
          is to reformulate them as  optimal control problems of the evolution equation
                in a Hilbert space  by inserting  the delay term in the linear unbounded operator of the evolution equation (see, e.g.,
                Carlier and Tahraoui \cite{car1} and Federico,  Goldys and  Gozzi \cite{fed} for deterministic cases, and  Gozzi and Marinelli \cite{gozz1} for stochastic cases).
                 Since coefficients $b$ and $\sigma$ are genuinely
              nonlinear functions about $X^{x,u}_{\cdot}$,   the standard techniques  are not applicable in
                 our case.
                 By  reformulating
the original control problem as  infinite dimensional stochastic control
problem in a suitable Banach space,  Federico \cite{fed1} studied the optimal control of a stochastic  differential equation with delay
arising in the management of a pension fund with surplus,  in which the delay in the state equation concentrates on a point of the past and appears in a nonlinear way. However,
as the author said  that the uniqueness of viscosity solutions is
a  difficult topic and it is not clear whether their  definition of solution is strong enough
to guarantee such a result or not.
\par
 Our paper \cite{zhou5} introduced a Crandall-Lions  definition of viscosity solutions for second order parabolic path-dependent HJB equations associated with optimal control problems for path-dependent stochastic differential equations (PSDEs), and
      identified the value functional of optimal control problems as unique viscosity solution to associated path-dependent HJB equations when
       coefficients  only satisfy $d_{\infty}$-Lipschitz conditions with respect to
        path function. However,   this path-dependent case does not include our delay case since  the coefficients of  PSDEs depend essentially on time $t$.
 \par

          Our main results are as follows.  For every $m\in \mathbb{N}^+$ and $M\in \mathbb{R}$, we define  functional $\Upsilon^{m,M}:[0,\infty)\times {\cal{C}}_0\times [0,\infty)\times {\cal{C}}_0\rightarrow \mathbb{R}$ by
          $$
          \Upsilon^{m,M}(t,x,s,y)=S_m(t,x,s,y)+M|x(0)-y(0)|^{2m}, \  ~~ (t,x), (s,y)\in [0,\infty)\times {\cal{C}}_0,
          $$
          and
           \begin{eqnarray*}
S_m(t,x,s,y)=\begin{cases}
            \frac{(|x_{(s-t)\vee0}-y_{(t-s)\vee0}|_C^{2m}-|x(0)-y(0)|^{2m})^3}{|x_{(s-t)\vee0}-y_{(t-s)\vee0}|^{4m}_{C}}, \
         ~~ |x_{(s-t)\vee0}-y_{(t-s)\vee0}|_{C}\neq0; \\
0, ~~~~~~~~~~~~~~~~~~~~~~~~~~~~~~~~~~~~~~~~~~~~~ |x_{(s-t)\vee0}-y_{(t-s)\vee0}|_{C}=0,
\end{cases}
\end{eqnarray*}
 where  $x_h\in {\cal{C}}_0$ and
$
  x_h(\theta):=x(0){\mathbf{1}}_{[-h,0]}(\theta)+x(\theta+h){\mathbf{1}}_{(-\infty,-h)}(\theta), \ \ \theta\in (-\infty,0]
$ for every $(h,x)\in [0,\infty)\times {\cal{C}}_0$.
\par
This key functional  is the starting point for the proof of uniqueness result. 
 First, we define
   $$
                 \overline{\Upsilon}^{3,3}(t,x,s,y):=\Upsilon^{3,3}(t,x,s,y)+|s-t|^2, \ \ (t,x), (s,y)\in [0,\infty)\times {\cal{C}}_0.
                 $$
 Unfortunately, $\overline{\Upsilon}^{3,3}$ is not a  gauge-type function in space $[0,\infty)\times {\cal{C}}_0$ with usual norm $|\cdot|_1$, where $|(t,x)|_1=|t|+|x|_C$ for every $(t,x)\in [0,\infty)\times {\cal{C}}_0$ (see Remark \ref{remarks} (iv)). To overcome this difficulty,  we define a metric on $[0,\infty)\times {\cal{C}}_0$  as follows: for any $ 0\leq t\leq s<\infty$ and $(t,x), (s,y)\in [0,\infty)\times {\cal{C}}_0$,
\begin{eqnarray}\label{2.1}
    d_{\infty}((t,x),(s,y))=d_{\infty}((s,y),(t,x)):=|s-t|+|x_{s-t}-y|_C,
\end{eqnarray}
 and show that $\overline{\Upsilon}^{3,3}$ is  a  smooth  gauge-type function in space $([0,\infty)\times {\cal{C}}_0, d_{\infty})$. Then
                 a modification of Borwein-Preiss variational principle (see Theorem 2.5.2 in  Borwein \& Zhu  \cite{bor1}) can be used  to get a maximum of a perturbation of the auxiliary function $\Psi$.
\par
 Second,
 for every fixed $(t,x)\in [0,\infty)\times{\cal{C}}_0$,   define $f:[t,\infty)\times {\cal{C}}_0\rightarrow \mathbb{R}$ by
 $$
 f(s,y):=\Upsilon^{3,3}(t,x,s,y), \ \  (s,y)\in[t,\infty)\times {\cal{C}}_0.
 $$
 We  study its  regularity in the horizontal/vertical sense and show that it satisfies  a functional It\^o formula. Then the test function in our definition of viscosity solutions and the auxiliary function $\Psi$ in the proof of uniqueness  can include $f$. More importantly, $f(s,y)$  is equivalent to $|y-x_{s-t}|_C^6$.  By this,  the uniqueness  result is  established
            when the coefficients only satisfy   Lipschitz assumption under $|\cdot|_C$.
                 \par
                 Finally, notice that the key  functional includes time term $t$, different from the definition of viscosity solutions to
                  classic  elliptic HJB equations (see Definition 2.2 in Crandall,  Ishii  and  Lions \cite{cran2}), our definition of viscosity solutions includes time term $t$ and
                   horizontal derivative $\partial_t$. However, we show that our definition of viscosity solutions  is a natural  extension of viscosity
                    solutions to  classic  elliptic HJB equations.
\par
Regarding existence, we  show that the value functional  $V$  defined in  (\ref{eq3}) is  a viscosity solution to  HJB
                         equation with infinite delay given in  (\ref{hjb1}) by functional It\^o  formula  and dynamic programming principle (DPP).
                         Such a formula was firstly provided  in
                          Dupire  \cite{dupire1}
   (see also Cont and  Fourni\'e \cite{cotn0}, \cite{cotn1}). 
   We point out that the functional $\Upsilon^{1,3}$ is also the key in the proof of existence.
In  infinite-horizon optimal control problem, it is an important problem to find a suitable $\lambda$ such that (\ref{cost1}) is well-defined. Applying functional It\^o formula to $\Upsilon^{1,3}$, there exists a constant  $\Theta>0$, such that (\ref{cost1}) is well-defined when $\lambda> \Theta$ (see Theorem \ref{theoremv}). This constant is smaller than that obtained by It\^o formula.

 \par
  We also mention that a new concept of viscosity solutions for semi-linear path-dependent partial differential equations
was introduced by Ekren, Keller, Touzi
 and Zhang \cite{ekren1} in terms of a nonlinear expectation, and further extended to
 fully nonlinear parabolic
equations by Ekren, Touzi, and Zhang \cite{ekren3, ekren4},  elliptic
equations by Ren \cite{ren}, obstacle problems by Ekren \cite{ekren0}   when the  Hamilton function $\mathbf{H}$ is uniformly nondegenerate, and degenerate second-order
equations by Ren, Touzi, and Zhang \cite{ren1} and Ren and Rosestolato \cite{ren2} when  the nonlinearity  $\mathbf{H}$ is $d_p$-uniformly continuous in the path function.
However, none of the results we know are directly applicable to our situation as in our case the Hamilton function $\mathbf{H}$ may be degenerate and is only required to have continuity properties under supremum norm $|\cdot|_C$.
             \par
                 The outline of this article is  as follows. In the following
              section, we introduce the framework of \cite{cotn1} and \cite{dupire1} and a modification of Borwein-Preiss variational principle.
           We  present  the smooth functionals $S_m$ which are the key to proving the stability and uniqueness results of viscosity solutions in Section 3.
           In Section 4, we introduce  preliminary results on  stochastic delay optimal control problems. 
           In Section 5, we define classical and viscosity solutions to our
             HJB equations and  prove  that the value functional $V$ defined by (\ref{eq3}) is a viscosity solution
               to equation (\ref{hjb1}).   We also show
             the consistency with the notion of classical solutions and the stability result.
                 A maximum principle for delay case is given and the uniqueness of viscosity solutions for  (\ref{hjb1}) is proved in Section 6.

\section{ Preliminaries}
\par
2.1. ${Notations \ and \ Spaces}$.
 We list some notations that are used in this paper.
 For the vectors $x,y\in \mathbb{R}^d$, the scalar product is denoted by $(x,y)_{\mathbb{R}^d}$ and the
         Euclidean norm $(x,x)^{\frac{1}{2}}_{\mathbb{R}^d}$ is denoted by $|x|$ (we use the same symbol $|\cdot|$ to denote the Euclidean norm on ${\mathbb{R}}^k$, for any $k\in \mathbb{N}$). If $A$ is a vector or matrix, its transpose is denoted by $A^\top$; For  a matrix $A$, denote its operator norm and
          Hilbert-Schmidt norm    by $|A|$ and $|A|_2$, respectively.
          Let
         ${\cal{D}}:=D((-\infty,0],\mathbb{R}^d)$,  the set of c\`adl\`ag $\mathbb{R}^d$-functions on $(-\infty,0]$, and ${\cal{C}}:=C((-\infty,0],\mathbb{R}^d)$  the family of continuous functions from $(-\infty,0]$ to $\mathbb{R}^d$.
         Let ${\cal{D}}_0=\{f\in {\cal{D}}: \lim_{\theta\rightarrow-\infty}f(\theta)=0 \}$ and ${\cal{C}}_0=\{f\in {\cal{C}}: \lim_{\theta\rightarrow-\infty}f(\theta)=0 \}$.
  We define a norm on ${\cal{D}}_0$  as
         follows:
   $$
                      |x|_{C}= \sup_{\theta\in
                      (-\infty,0]}|x(\theta)|,\ \ x \in {\cal{D}}_0.
   $$
               Then, $({\cal{D}}_0,|\cdot|_{C})$  and $({\cal{C}}_0,|\cdot|_{C})$ are  Banach spaces.
               Following Dupire \cite{dupire1},  for $ x\in {\cal{D}}_0$, $h\geq0$, we define $x_h\in {\cal{D}}_0$ as
\begin{eqnarray*}
  x_h(\theta):=x(0){\mathbf{1}}_{[-h,0]}(\theta)+x(\theta+h){\mathbf{1}}_{(-\infty,-h)}(\theta), \ \ \theta\in (-\infty,0].
\end{eqnarray*}
               \par
                For each  $t\in[0,\infty)$,
          define
         $\hat{\Lambda}^t:=[t,\infty)\times {\cal{D}}_0$ and  ${\Lambda}^t:=[t,\infty)\times {\cal{C}}_0$. Let $\hat{\Lambda}$ and ${\Lambda}$ denote $\hat{\Lambda}^0$ and $\hat{\Lambda}^0$, respectively.
We define a metric on $\hat{\Lambda}^t$  as follows: for any $ t\leq s\leq l<\infty$ and $(s,x), (l,y)\in \hat{\Lambda}^t$,
\begin{eqnarray}\label{2.1}
    d_{\infty}((s,x),(l,y))=d_{\infty}((l,y),(s,x)):=|l-s|+|x_{l-s}-y|_C.
\end{eqnarray}
Then  $(\hat{\Lambda}^t, d_{\infty})$ and $({\Lambda}^t, d_{\infty})$ are complete metric spaces. 
\par
Now we define the pathwise  derivatives of Dupire \cite{dupire1}.
 \begin{definition}\label{definitionc0} (Pathwise derivatives)
       Let $t\in [0,\infty)$ and  $f:\hat{\Lambda}^t\rightarrow \mathbb{R}$.
\begin{description}
        \item{(i)}  Given $(s,x)\in \hat{\Lambda}^t$, the horizontal derivative of $f$ at $(s,x)$ (if the corresponding limit exists and is finite) is defined as
        \begin{eqnarray}\label{2.3}
               \partial_tf(s,x):=\lim_{h\rightarrow0,h>0}\frac{1}{h}\left[f(s+h,x_h)-f(s,x)\right].
\end{eqnarray}
If the above limit exists and is finite for every $(s,x)\in \hat{\Lambda}^t$, the map $\partial_tf:\hat{\Lambda}^t\rightarrow \mathbb{R}$ is called the horizontal derivative of $f$ with domain $\hat{\Lambda}^t$.
       \par
      \item{(ii)}  Given $(s,x)\in \hat{\Lambda}^t$,  the vertical derivatives of first and second order of $f$ at $(s,x)$
(if the corresponding limit exists and is finite) are defined as
      \begin{eqnarray}\label{2.20jia}
 \partial_{x}f(s,x):=(\partial_{x_1}f(s,x),\partial_{x_2}f(s,x),\ldots, \partial_{x_d}f(s,x)),
\end{eqnarray}
and
\begin{eqnarray}\label{2.20jia}
 \partial_{xx}f(s,x):=(\partial_{x_ix_j}f(s,x))_{i,j=1,2,\ldots,d},
\end{eqnarray}
where
        \begin{eqnarray}\label{2.2}
 \partial_{x_i}f(s,x):=\lim_{h\rightarrow0}\frac{1}{h}\bigg{[}f(s,x+he_i\mathbf{1}_{\{0\}})-f(s,x)\bigg{]},\
 i=1,2,\ldots,d,
 \end{eqnarray}
  \begin{eqnarray}\label{2.20204}
 \partial_{x_ix_j}f(s,x)&:=&\partial_{x_i}(\partial_{x_j}f)(s,x),\
 i,j=1,2,\ldots,d,
\end{eqnarray}
with $e_1,e_2,\ldots,e_d$ is the standard  orthonormal basis of $\mathbb{R}^d$.
If the above limits in (\ref{2.2}) exist and are finite for every $(s,x)\in \hat{\Lambda}^t$, the map $\partial_xf:=(\partial_{x_1}f,\partial_{x_2}f,\ldots, \partial_{x_d}f)^\top:\hat{\Lambda}^t\rightarrow \mathbb{R}^d$
is called the first order vertical  derivative of $f$ with domain $\hat{\Lambda}^t$,
and if the above limits in (\ref{2.20204}) exist and are finite for every $(s,x)\in \hat{\Lambda}^t$, the map  $\partial_{xx}f:=(\partial_{x_ix_j}f)_{i,j=1,2,\ldots,d}:\hat{\Lambda}^t\rightarrow {\cal{S}}(\mathbb{R}^d)$ is called the second order vertical  derivative of $f$ with domain $\hat{\Lambda}^t$,
where ${\cal{S}}(\mathbb{R}^d)$ is the
space of all $d\times d$ symmetric matrices.
\end{description}
\end{definition}
\begin{definition}\label{definitionc}
       Let $t\in[0,\infty)$ and $f:\hat{\Lambda}^t\rightarrow \mathbb{R}$ be given.
\begin{description}
        \item{(i)}
                 We say $f\in C(\hat{\Lambda}^t)$ if $f$ is continuous in $(s,x)$  on $(\hat{\Lambda}^t,d_\infty)$;
\par
       \item{(ii)}  We say
        $f\in C^{1,2}(\hat{\Lambda}^t)\subset C(\hat{\Lambda}^t)$ if   $\partial_{t}f$, $\partial_{x}f$ and $\partial_{xx}f$ exist and are continuous in $(s,x)$  on $(\hat{\Lambda}^t,d_\infty)$;
       \par
       \item{(iii)} We say $f\in C^{1,2}_p(\hat{\Lambda}^t)\subset C^{1,2}(\hat{\Lambda}^t)$ if $f$, $\partial_{t}f$, $\partial_{x}f$ and $\partial_{xx}f$  grow  in a polynomial way.
\end{description}
\end{definition}
 For every $t\in [0,\infty)$,  $f:{\Lambda}^t\rightarrow \mathbb{R}$ and $\hat{f}:\hat{\Lambda}^t\rightarrow \mathbb{R}$ are called consistent
  on ${\Lambda}^t$ if $f$ is the restriction of $\hat{f}$ on ${\Lambda}^t$.
\begin{definition}\label{definitionc2}
      Let $t\in[0,\infty)$ and $f:{\Lambda}^t\rightarrow \mathbb{R}$ be given.
\par
       \item{(i)} We say $f\in C({\Lambda}^t)$ if $f$ is continuous in $(s,x)$ on $({\Lambda}^t,d_\infty)$. 
\par
       \item{(ii)}  We say $f\in C_p^{1,2}({\Lambda}^t)\subset C({\Lambda}^t)$ if
                     there exists $\hat{f}\in C_p^{1,2}(\hat{\Lambda}^t)$ which is consistent with $f$ on ${\Lambda}^t$.
\end{definition}
\begin{definition}\label{1120}
 Let $f: {\cal{C}}_0\rightarrow \mathbb{R}$ be given. Define $\hat{f}:\Lambda\rightarrow \mathbb{R}$ by $\hat{f}(t,x):=f(x),  \  (t,x)\in \Lambda$.
  \begin{description}
 \item{(i)}  We say $f\in C( {\cal{C}}_0)$ if $f$ is continuous in $x$ on $({\cal{C}}_0, |\cdot|_C)$.
 \item{(ii)}  We say $f\in C(\Lambda)$ if $\hat{f}\in C(\Lambda)$. 
  \item{(iii)} We say $f\in C_p^{1,2}( {\cal{C}}_0)$ if $\hat{f}\in C_p^{1,2}(\Lambda)$; and we define  $\partial_tf$, $\partial_{x}f$ and $\partial_{xx}f$ by
  $$\partial_tf(x):=\partial_t\hat{f}(0,x), \ \ \ \partial_xf(x):=\partial_{x}\hat{f}(0,x), \ \ \  \partial_{xx}f(x):=\partial_{xx}\hat{f}(0,x),\ \ \ x\in  {\cal{C}}_0.$$
 \end{description}
\end{definition}
                   We also have
 \begin{lemma}\label{theoremv1202}   $f\in C({\cal{C}}_0)$ if and only if  $f\in C(\Lambda)$.
\end{lemma}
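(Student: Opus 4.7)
The plan is to prove the two implications directly from the definitions. Let $\hat{f}(t,x):=f(x)$ for $(t,x)\in\Lambda$.

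For the implication $f\in C(\Lambda)\Rightarrow f\in C({\cal{C}}_0)$, suppose $x_n\to x$ in $({\cal{C}}_0,|\cdot|_C)$. Since $x_0=x$ for every $x\in{\cal{C}}_0$ (directly from the definition of $x_h$), the $d_\infty$ distance on the slice $\{0\}\times{\cal{C}}_0$ reduces to
\begin{eqnarray*}
d_\infty((0,x_n),(0,x))=|(x_n)_0-x|_C=|x_n-x|_C\to 0.
\end{eqnarray*}
Hence $(0,x_n)\to (0,x)$ in $(\Lambda,d_\infty)$, and continuity of $\hat{f}$ gives $f(x_n)=\hat{f}(0,x_n)\to\hat{f}(0,x)=f(x)$.

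For the harder implication $f\in C({\cal{C}}_0)\Rightarrow f\in C(\Lambda)$, I would take a sequence $(s_n,x_n)\to (s,x)$ in $(\Lambda,d_\infty)$ and prove $f(x_n)\to f(x)$. Passing to a subsequence we may assume $s_n\geq s$ for every $n$ (the other case is symmetric). Then $|s_n-s|\to 0$ and $|x_{s_n-s}-x_n|_C\to 0$, so the triangle inequality
\begin{eqnarray*}
|x_n-x|_C\leq |x_n-x_{s_n-s}|_C+|x_{s_n-s}-x|_C
\end{eqnarray*}
reduces the problem to showing $|x_h-x|_C\to 0$ as $h\to 0^+$ for any fixed $x\in{\cal{C}}_0$.

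The key observation, and the main technical step, is that every $x\in{\cal{C}}_0$ is uniformly continuous on $(-\infty,0]$: since $\lim_{\theta\to-\infty}x(\theta)=0$, the function $x$ extends continuously to the one-point compactification of $(-\infty,0]$, and continuity on a compact set yields uniform continuity. Let $\omega_x$ denote the modulus of continuity of $x$. For $\theta\in[-h,0]$ we have $|x_h(\theta)-x(\theta)|=|x(0)-x(\theta)|\leq\omega_x(h)$, and for $\theta<-h$ we have $|x_h(\theta)-x(\theta)|=|x(\theta+h)-x(\theta)|\leq\omega_x(h)$. Taking the supremum over $\theta$ yields $|x_h-x|_C\leq\omega_x(h)\to 0$. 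Combining this with the triangle inequality above gives $x_n\to x$ in $|\cdot|_C$, and the assumed $|\cdot|_C$-continuity of $f$ then yields $f(x_n)\to f(x)$, i.e.\ $\hat{f}(s_n,x_n)\to\hat{f}(s,x)$, as desired.
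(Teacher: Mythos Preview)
Your proof is correct and follows essentially the same strategy as the paper: both implications reduce to showing that $d_\infty$-convergence $(s_n,x_n)\to(s,x)$ forces $|x_n-x|_C\to 0$, and both of you obtain this from a triangle inequality together with the uniform continuity of the fixed path $x\in{\cal{C}}_0$. Your explicit justification of uniform continuity via the one-point compactification is a nice touch; the paper simply asserts $|x-x_{s-t}|_C\to 0$ without comment.

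One point deserves care: your claim that the case $s_n<s$ is ``symmetric'' is not quite literal. When $s_n<s$, the definition of $d_\infty$ gives $|(x_n)_{s-s_n}-x|_C\to 0$, and a naive swap of roles in your triangle inequality would lead to $|x_n-x|_C\le |(x_n)_{s-s_n}-x|_C+|(x_n)_{s-s_n}-x_n|_C$, where the second term now involves the modulus of continuity of the \emph{varying} $x_n$, not of the fixed $x$. The paper avoids this by instead introducing the backward shift $z_n(\theta):=x(\theta+s_n-s)$ of the \emph{fixed} path and writing $|x_n-x|_C\le |x_n-z_n|_C+|z_n-x|_C$; one checks $|x_n-z_n|_C\le |(x_n)_{s-s_n}-x|_C$ directly, and $|z_n-x|_C\le\omega_x(s-s_n)$ as before. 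With this adjustment your argument goes through, and the two proofs are the same in substance.
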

{\bf  Proof}. \ \
For every $(t,x), (s,y)\in \Lambda$, if $t\leq s$,
$$
              |x-y|_C\leq |x_{s-t}-y|_C+|x-x_{s-t}|_C\leq d_\infty((t,x),(s,y))+|x-x_{s-t}|_C;
$$
if $t>s$,
$$
              |x-y|_C\leq |z-y|_C+|x-z|_C\leq d_\infty((t,x),(s,y))+\sup_{\theta\in (-\infty,0]}|x(\theta)-x(\theta+s-t)|,
$$
where $z(\theta)=x(\theta+s-t)$ for all $\theta\in (-\infty,0]$.
Letting $(s,y)\rightarrow(t,x)$ in $(\Lambda,d_\infty)$, we have  $y\rightarrow x$ in $({\cal{C}}_0,|\cdot|_C)$ and $s\rightarrow t$.  Then if $f\in C({\cal{C}}_0)$, we get
$\hat{f}(s,y)=f(y)\rightarrow f(x)=\hat{f}(t,x)$ as $(s,y)\rightarrow(t,x)$ in $(\Lambda,d_\infty)$.
Thus,  $f\in C(\Lambda)$.  It is clear that $f\in C({\cal{C}}_0)$ if  $f\in C(\Lambda)$. The proof is now complete. \ \ $\Box$
\par
 Let $\{W(t),t\geq0\}$ be a $n$-dimensional standard Wiener process defined on  a  complete probability  space
        $(\Omega,{\cal {F}},\mathbb{P})$.  Let $\{{\mathcal {F}}_{t}\}_{t\geq0}$ be the natural filtration of $W(t)$, augmented with the family ${\cal{N}}$ of $\mathbb{P}$-null of $\cal{F}$.
      The filtration
           $\{{\mathcal {F}}_{t}\}_{t\geq0}$  satisfies the usual condition.
   For every $[t,s]\subset[0,\infty)$, we also use the
       notations:
       $$
              {\mathcal{F}}_{t}^{s,0}=\sigma(W(l)-W(t):l\in[t,s]),\  \ \  \ {\mathcal{F}}_{t}^{s}={\mathcal{F}}_{t}^{s,0}\vee \mathcal
             {N}.
       $$
        We also write ${\cal{F}}^{t,0}$ for $\{{\cal{F}}^{s,0}_t \}_{s\geq t}$ and ${\cal{F}}^t$ for $\{{\cal{F}}^s_t\}_{ s\geq t}$.

Next we define several classes of random variables or stochastic processes with values in a Banach space $(K,|\cdot|_K)$.

  $\bullet L^p(\Omega,{\cal{F}}_t;{K})$  defined for all  $t\geq0$ and $p\geq 1$, denotes   the space  of all ${\cal{F}}_t$-measurable maps $\xi: \Omega\rightarrow{K}$ satisfying $\mathbb{E}|\xi|^p_K<\infty$.

 $\bullet L^{2}_{\mathcal{P}}(\Omega\times [t,T];K)$  defined for all $0\leq t<T<\infty$, denotes   the space of equivalence classes of  processes $y\in L^2(\Omega\times [t,T];K)$,  admitting a predictable version.  $ L^{2}_{\mathcal{P}}(\Omega\times [t,T];K)$
  is endowed with the norm
$$|{y}|^{2}=\mathbb{E}\int_{t}^{T}|{y}(s)|_K^{2}ds.$$

$\bullet L^{2}_{\mathcal{P},l}(\Omega\times [t,\infty);K)$  defined for all $t\geq0$, denotes   the space of equivalence classes of  processes $\{y(s),s\geq t\}$,  with values in $K$ such that
 $y|_{[t,T]}\in L^{2}_{\mathcal{P}}(\Omega\times [t,T];K)$ for all $T>t$, where $y|_{[t,T]}$ denotes the restriction of  $y$ to the interval
$[t,T]$.

\vbox{}
2.2. \emph{Functional   It\^o formula}.
Assume that $\vartheta\in L^{2}_{\mathcal{P},l}(\Omega\times [0,\infty);\mathbb{R}^d)$,
 $\varpi\in L^{2}_{\mathcal{P},l}(\Omega\times [0,\infty);\mathbb{R}^{d\times n})$ and $(\tau,x)\in {\Lambda}$, 
then the following process
\begin{eqnarray}\label{formular1}
                 X(s)=x(0)+\int^{s}_{\tau}\vartheta(\sigma)d\sigma+\int^{s}_{\tau}\varpi(\sigma)dW(\sigma),\ s\geq \tau\geq0,
\end{eqnarray}
and  $X(s)=x(s-\tau),\ s\in (-\infty,\tau)$,
is a continuous semi-martingale on $[\tau, \infty)$.
 The following
               functional It\^o formula  is needed to prove the existence  of viscosity solutions.
\begin{lemma}\label{theoremito}
\ \
Suppose  $f\in C_p^{1,2}(\hat{\Lambda}^{t})$
for some $t\in[0,\infty)$. Then, under the above conditions, $\mathbb{P}$-a.s.,  for all $\tau\vee t\leq \hat{t}\leq s<\infty $:
\begin{eqnarray}\label{statesop0}
                 f(s,X_s)&=&f(\hat{t},X_{\hat{t}})+\int_{\hat{t}}^{s}[\partial_tf(\sigma,X_\sigma)+(\partial_xf(\sigma,X_\sigma),\vartheta(\sigma))_{\mathbb{R}^d}
                 +\frac{1}{2}\mbox{tr}(\partial_{xx}f(\sigma,X_\sigma)\varpi(\sigma)\varpi^\top(\sigma))]d\sigma\nonumber\\
                 &&+\int^{s}_{\hat{t}}\partial_xf(\sigma,X_\sigma)\varpi(\sigma)dW(\sigma).
\end{eqnarray}
 Here and in the following, for every $s\in \mathbb{R}$, $X(s)$ denotes  the value of $X$  at
 time $s$, and  $X_s$ the function from $(-\infty,0]$ to $\mathbb{R}^d$ by
 $
                          X_s(\theta)=X(s+\theta),\ \ \theta\in (-\infty,0].
 $
\end{lemma}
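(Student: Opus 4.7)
The plan is to follow the Dupire / Cont-Fourni\'e strategy (\cite{dupire1,cotn0,cotn1}) adapted to the infinite delay framework, exploiting $d_\infty$-continuity and the polynomial growth built into $C_p^{1,2}(\hat{\Lambda}^t)$. First I fix $T > \hat{t}$ and, via the stopping times $\tau_k := \inf\{s \geq \hat{t} : |X(s)| + \int_{\hat{t}}^s(|\vartheta(\sigma)| + |\varpi(\sigma)|^2)d\sigma > k\}$, reduce to the case in which $X$, $\vartheta$, and $\varpi$ are bounded on $[\hat{t}, T]$. Combined with the polynomial growth of $f$ and its derivatives, this supplies the uniform integrability needed to send $k \to \infty$ at the end. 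It also guarantees $\sup_{\sigma \in [\hat{t},T]}|X_\sigma|_C < \infty$ a.s., using that $X_\sigma$ inherits its $\mathcal{C}_0$-tail from $x$.

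Next I introduce a partition $\pi_n = \{\hat{t} = t_0^n < \cdots < t_{N_n}^n = T\}$ with $|\pi_n| \to 0$, together with the piecewise constant c\`adl\`ag approximation $X^n$ defined by $X^n(\sigma) := X(t_i^n)$ for $\sigma \in [t_i^n, t_{i+1}^n)$ and $X^n(\sigma) := X(\sigma)$ for $\sigma < \hat{t}$. A direct check gives $X^n_\sigma = (X^n_{t_i^n})_{\sigma - t_i^n}$ for $\sigma \in [t_i^n, t_{i+1}^n)$, so the definition of $\partial_t$ and the fundamental theorem of calculus yield
\begin{eqnarray*}
f(t_{i+1}^n, X^n_{t_{i+1}^n -}) - f(t_i^n, X^n_{t_i^n}) = \int_{t_i^n}^{t_{i+1}^n}\partial_t f(\sigma, X^n_\sigma)\,d\sigma.
\end{eqnarray*}
At $t_{i+1}^n$ the approximation jumps only at $\theta = 0$, by $\Delta_i X := X(t_{i+1}^n) - X(t_i^n)$, and Taylor's theorem applied to $z \mapsto f(t_{i+1}^n, X^n_{t_{i+1}^n -} + z\mathbf{1}_{\{0\}})$ gives
\begin{eqnarray*}
f(t_{i+1}^n, X^n_{t_{i+1}^n}) - f(t_{i+1}^n, X^n_{t_{i+1}^n -}) = (\partial_x f(t_{i+1}^n, X^n_{t_{i+1}^n -}), \Delta_i X)_{\mathbb{R}^d} + \frac{1}{2}\Delta_i X^\top \partial_{xx}f(t_{i+1}^n, X^n_{t_{i+1}^n -})\Delta_i X + R_i^n,
\end{eqnarray*}
with $|R_i^n| = o(|\Delta_i X|^2)$ uniformly in $i$. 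Telescoping from $\hat{t}$ to $T$ and substituting $\Delta_i X = \int_{t_i^n}^{t_{i+1}^n}\vartheta(\sigma)d\sigma + \int_{t_i^n}^{t_{i+1}^n}\varpi(\sigma)dW(\sigma)$ assembles the right-hand side of (\ref{statesop0}) modulo error terms.

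The main step is then the limit $|\pi_n| \to 0$. Almost surely, $X^n_\sigma \to X_\sigma$ in $d_\infty$ uniformly in $\sigma \in [\hat{t}, T]$, by uniform continuity of the sample path on compact intervals. Continuity of $f$, $\partial_t f$, $\partial_x f$, $\partial_{xx} f$ on $(\hat{\Lambda}^t, d_\infty)$ combined with polynomial domination transforms the horizontal integrals into $\int_{\hat{t}}^T\partial_t f(\sigma, X_\sigma)d\sigma$. The first-order vertical Riemann sums converge in $L^2(\Omega)$ to $\int_{\hat{t}}^T (\partial_x f(\sigma, X_\sigma), \vartheta(\sigma))_{\mathbb{R}^d}d\sigma + \int_{\hat{t}}^T\partial_x f(\sigma, X_\sigma)\varpi(\sigma)dW(\sigma)$ via It\^o isometry; the quadratic terms converge to $\frac{1}{2}\int_{\hat{t}}^T\mbox{tr}(\partial_{xx}f(\sigma, X_\sigma)\varpi(\sigma)\varpi^\top(\sigma))d\sigma$ via the quadratic variation of $X$ and an $L^2$ estimate; the remainders vanish in $L^1$ by Burkholder-Davis-Gundy and the bounds $\mathbb{E}|\Delta_i X|^{2+\delta} \leq C(t_{i+1}^n - t_i^n)^{1+\delta/2}$. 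The hard part will be the uniform $d_\infty$-control of the derivatives along the approximating sequence: one must use the $\lim_{\theta\to-\infty} = 0$ structure of $\mathcal{C}_0$ to confine $(X^n_\sigma)$ to a compact set on which the continuous derivatives $\partial_x f$ and $\partial_{xx} f$ are uniformly continuous, a step that is automatic in finite dimension but requires the $\mathcal{C}_0$-specific decay here.
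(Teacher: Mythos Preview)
Your approach is correct and is precisely the Cont--Fourni\'e/Dupire strategy that the paper itself invokes: the paper does not give an independent proof but simply writes ``The proof is similar to Theorem~4.1 in Cont \& Fourni\'e \cite{cotn1} (see also Dupire \cite{dupire1}). Here we omit it.'' You have in fact supplied more detail than the paper, including the localization, the piecewise-constant c\`adl\`ag approximation, the horizontal/vertical telescoping decomposition, and the passage to the limit; the one genuinely new point in the infinite-delay setting---that the relevant family $\{X^n_\sigma\}$ lies in a $d_\infty$-compact set because all paths share the fixed tail $x(\cdot+\sigma-\tau)$ for $\theta$ sufficiently negative---is exactly the adaptation the paper is tacitly deferring to the reader, and you have identified it correctly.
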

The proof is  similar to Theorem 4.1 in  Cont \& Fournie \cite{cotn1} (see also Dupire \cite{dupire1}). Here we omit it.

\par
By the above Lemma, we have the following important results.
\begin{lemma}\label{0815lemma}
             Let $f\in C_p^{1,2}(\Lambda^t)$ and $\hat{f}\in C_p^{1,2}(\hat{\Lambda}^t)$ such that $\hat{f}$ is consistent with $f$ on $\Lambda^t$, then the following definition
             $$
             \partial_tf:=\partial_t\hat{f}, \ \ \ \partial_xf:=\partial_x\hat{f}, \ \ \ \partial_{xx}f:=\partial_{xx}\hat{f} \ \ \mbox{on} \ \Lambda^t
             $$
              is independent of the choice of $\hat{f}$. Namely, if there is another $\hat{f}'\in C_p^{1,2}(\hat{\Lambda}^t)$ such that $\hat{f}'$ is consistent with $f$ on $\Lambda^t$, then the derivatives of $\hat{f}'$
              coincide with those of $\hat{f}$ on $\Lambda^t$.
\end{lemma}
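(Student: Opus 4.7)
The plan is to argue separately for the horizontal derivative $\partial_t$ and for the vertical derivatives $\partial_x, \partial_{xx}$, since the former is almost immediate while the latter forces an evaluation of the extensions at discontinuous perturbations --- precisely where $\hat{f}$ and $\hat{f}'$ may a priori disagree. Fix $f\in C_p^{1,2}(\Lambda^t)$ together with two consistent extensions $\hat{f},\hat{f}'\in C_p^{1,2}(\hat{\Lambda}^t)$.

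For the horizontal derivative the key point is that the shift $x_h$ of a continuous path is again continuous. Indeed, given $(s,x)\in \Lambda^t$ and $h>0$, the definition $x_h(\theta)=x(0)\mathbf{1}_{[-h,0]}(\theta)+x(\theta+h)\mathbf{1}_{(-\infty,-h)}(\theta)$ matches at $\theta=-h$ because $x(-h+h)=x(0)$, and $\lim_{\theta\to-\infty}x_h(\theta)=0$, hence $x_h\in {\cal{C}}_0$. Therefore $\hat{f}(s+h,x_h)=f(s+h,x_h)=\hat{f}'(s+h,x_h)$ and $\hat{f}(s,x)=\hat{f}'(s,x)$, so dividing by $h$ and letting $h\downarrow 0$ yields $\partial_t\hat{f}(s,x)=\partial_t\hat{f}'(s,x)$ on $\Lambda^t$.

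For the vertical derivatives the comparison is routed through the functional It\^o formula (Lemma \ref{theoremito}), because that formula only probes $\hat{f}$ along continuous semimartingales, where the two extensions must coincide. Fix $(\tau,x)\in \Lambda^t$ and arbitrary deterministic constants $\vartheta_0\in \mathbb{R}^d$ and $\varpi_0\in \mathbb{R}^{d\times n}$, and set $X(s)=x(0)+\vartheta_0(s-\tau)+\varpi_0(W(s)-W(\tau))$ for $s\geq \tau$ and $X(s)=x(s-\tau)$ for $s<\tau$. Then $X_s\in {\cal{C}}_0$ for every $s\geq \tau$, so $\hat{f}(\sigma,X_\sigma)=\hat{f}'(\sigma,X_\sigma)$ $\mathbb{P}$-a.s. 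Applying Lemma \ref{theoremito} to $\hat{f}$ and to $\hat{f}'$ on $[\tau,s]$ and subtracting yields, $\mathbb{P}$-a.s.,
\begin{eqnarray*}
0 &=& \int_\tau^s \Big[(\partial_t\hat{f}-\partial_t\hat{f}')(\sigma,X_\sigma) + ((\partial_x\hat{f}-\partial_x\hat{f}')(\sigma,X_\sigma),\vartheta_0)_{\mathbb{R}^d} \\
&& \qquad +\frac{1}{2}\mbox{tr}\big((\partial_{xx}\hat{f}-\partial_{xx}\hat{f}')(\sigma,X_\sigma)\varpi_0\varpi_0^\top\big)\Big]d\sigma \\
&& + \int_\tau^s (\partial_x\hat{f}-\partial_x\hat{f}')(\sigma,X_\sigma)\varpi_0\,dW(\sigma).
\end{eqnarray*}
The $\partial_t$-difference already vanishes along $\{X_\sigma\}\subset {\cal{C}}_0$ by the previous step, so the remaining identity is a null continuous semimartingale whose bounded-variation and martingale parts must separately vanish. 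From the martingale part and It\^o isometry, $\mathbb{E}\int_\tau^s|(\partial_x\hat{f}-\partial_x\hat{f}')(\sigma,X_\sigma)\varpi_0|^2d\sigma=0$; continuity of the integrand in $\sigma$ and $X_\tau=x$ then give $(\partial_x\hat{f}(\tau,x)-\partial_x\hat{f}'(\tau,x))\varpi_0=0$, and since $\varpi_0$ is arbitrary, $\partial_x\hat{f}(\tau,x)=\partial_x\hat{f}'(\tau,x)$. Plugging this back, the finite-variation part also vanishes; differentiating at $s=\tau$ gives $\mbox{tr}((\partial_{xx}\hat{f}(\tau,x)-\partial_{xx}\hat{f}'(\tau,x))\varpi_0\varpi_0^\top)=0$, and letting $\varpi_0\varpi_0^\top$ sweep out the positive semidefinite cone together with the symmetry of $\partial_{xx}$ forces $\partial_{xx}\hat{f}(\tau,x)=\partial_{xx}\hat{f}'(\tau,x)$.

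The only genuine obstacle is the vertical case: the definition of $\partial_x$ demands evaluation at $x+he_i\mathbf{1}_{\{0\}}$, which has a jump at $\theta=0$ and therefore lies outside ${\cal{C}}_0$, exactly where $\hat{f}$ and $\hat{f}'$ need not agree. The device above replaces this discontinuous test perturbation by a continuous Brownian-driven perturbation and then lets the freedom in $\varpi_0$ recover the full values of the first- and second-order vertical derivatives; modulo this maneuver the proof is essentially a uniqueness-of-semimartingale-decomposition argument.
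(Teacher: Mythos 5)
Your proof is correct and rests on the same key ingredient as the paper's: because the vertical perturbation $x+he_i\mathbf{1}_{\{0\}}$ is discontinuous, the two extensions cannot be compared directly, so both proofs instead probe $\hat{f}$ and $\hat{f}'$ along continuous semimartingales via the functional It\^o formula (Lemma~\ref{theoremito}) and read off the derivatives. The only organizational difference is that you use a single test process carrying both a drift $\vartheta_0$ and a diffusion $\varpi_0$, and then pull apart $\partial_x$ and $\partial_{xx}$ via uniqueness of the semimartingale decomposition together with the It\^o isometry; the paper instead runs the argument twice, first with $\varpi=\mathbf{0}$ and $\vartheta\equiv h$ (so the $\partial_x$-step is entirely deterministic, with no stochastic integral at all), and then with $\vartheta=\mathbf{0}$, $\varpi\equiv a$ to recover $\partial_{xx}$, using the already-established agreement of $\partial_t$ and $\partial_x$ to kill the remaining terms. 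Both routes are valid; the paper's split makes the first-order step slightly more elementary, while your version is more compact. One small point worth being explicit about in your write-up: the identity $(\partial_x\hat{f}-\partial_x\hat{f}')(\tau,x)\varpi_0=0$ for the \emph{fixed} point $(\tau,x)$ must be promoted to all of $\Lambda^t$ before you invoke it inside the finite-variation integral, since that integral evaluates the difference along the moving path $(\sigma,X_\sigma)$, not just at $\sigma=\tau$; this is immediate because $(\tau,x)$ and $\varpi_0$ were arbitrary, but the logical order should be stated.
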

\par
{\bf  Proof}. \ \
 By the definition of the horizontal derivative, it is clear that $\partial_t\hat{f}(l,x)=\partial_t\hat{f}'(l,x)$ for every $(l,x)\in \Lambda^t$.
%
%
  Next,  for every $(l,x)\in \Lambda^t$, let   $\varpi=\mathbf{0}$,  $\tau=l$
   and $\vartheta\equiv h\in \mathbb{R}^d$ in (\ref{formular1}),  by  Lemma \ref{theoremito},
$$
                        \int^{s}_{l}(\partial_x\hat{f}(\sigma,X_\sigma),h)_{\mathbb{R}^d}d\sigma=\int^{s}_{l}(\partial_x\hat{f}'(\sigma, X_\sigma),h)_{\mathbb{R}^d}d\sigma, \ \ s\in [l,\infty),
$$
where $X_\sigma(r)=x((\sigma+r-l)\wedge 0)+(\sigma+r-l)h{\mathbf{1}}_{(l-\sigma,0]}(r), r\in (-\infty,0]$.
Here and
in the sequel, for notational simplicity, we use $\mathbf{0}$ to denote elements, or fucntions  which are
identically equal to zero. 
 By the continuity of $\partial_x\hat{f},\partial_x\hat{f}'$  and the arbitrariness of $h\in {\mathbb{R}^d}$, we  have  $\partial_x\hat{f}(l,x)=\partial_x\hat{f}'(l,x)$ for every $(l,x)\in \Lambda^t$.
 Finally,  let   $\vartheta=\mathbf{0}$,  $\tau=l$ and $\varpi\equiv a\in {\cal{S}}(\mathbb{R}^d)$ in (\ref{formular1}),  by  Lemma \ref{theoremito},
 $$
                        \int^{s}_{l}\mbox{tr}(\partial_{xx}\hat{f}(\sigma,X_\sigma)aa^*)d\sigma =\int^{s}_{l}\mbox{tr}(\partial_{xx}\hat{f}'(\sigma, X_\sigma)aa^*)d\sigma, \ \ s\in [l,\infty).
$$
 By the continuity of $\partial_{xx}\hat{f},\partial_{xx}\hat{f}'$ and the arbitrariness of $a\in {\Gamma}(\mathbb{R}^d)$, we also have  $\partial_{xx}\hat{f}(l,x)=\partial_{xx}\hat{f}'(l,x)$ for every $(l,x)\in \Lambda^t$. \ \ $\Box$

\vbox{}
2.3. \emph{Borwein-Preiss variational principle}.
In this subsection we introduce a modification of  Borwein-Preiss variational principle, which
 plays a crucial role in the proof of the comparison Theorem.
We
firstly recall the definition of gauge-type function for the space $(\Lambda^t,d_\infty)$.
\begin{definition}\label{gaupe}
              Let $t\in [0,\infty)$ be fixed.  We say that a  continuous  functional $\rho:\Lambda^t\times \Lambda^t\rightarrow [0,\infty)$ is a {gauge-type function}
              under $d_\infty$
              provided that:
             \begin{description}
        \item{(i)} $\rho((s,x),(s,x))=0$ for all $(s,x)\in \Lambda^t$,
        \item{(ii)} for any $\varepsilon>0$, there exists $\delta>0$ such that, for all $(s,x), (l,y)\in \Lambda^t$, we have $\rho((s,x),(l,y))\leq \delta$ implies that
        $d_\infty((s,x),(l,y))<\varepsilon$.
        \end{description}
\end{definition}
\begin{lemma}\label{theoremleft} 
Let $t\in [0,\infty)$ be fixed and
 $f:\Lambda^t\rightarrow \mathbb{R}$ be an upper semicontinuous functional 
and  bounded from above.
Suppose that $\rho$ is a gauge-type function  under $d_\infty$
and
 $\{\delta_i\}_{i\geq0}$ is a sequence of positive number, and suppose that $\varepsilon>0$ and $(t_0,x^0)\in \Lambda^t$ satisfy
 $$
f({t_0},x^0)\geq \sup_{(s,x)\in \Lambda^t}f(s,x)-\varepsilon.
 $$
 Then there exist $(\hat{t},\hat{x})\in \Lambda^t$ and a sequence $\{(t_i,x^i)\}_{i\geq1}\subset \Lambda^t$ such that
  \begin{description}
        \item{(i)} $\rho((\hat{t},\hat{x}),({t_0},x^0))\leq \frac{\varepsilon}{\delta_0}$,  $\rho((\hat{t},\hat{x}),({t_i},x^i))\leq \frac{\varepsilon}{2^i\delta_0}$ and $t_i\uparrow \hat{t}$ as $i\rightarrow\infty$,
        \item{(ii)}  $f(\hat{t},\hat{x})-\sum_{i=0}^{\infty}\delta_i\rho((\hat{t},\hat{x}),({t_i},x^i))\geq f({t_0},x^0)$, and
        \item{(iii)}  $f(s,x)-\sum_{i=0}^{\infty}\delta_i\rho((s,x),({t_i},x^i))
            <f(\hat{t},\hat{x})-\sum_{i=0}^{\infty}\delta_i\rho((\hat{t},\hat{x}),({t_i},x^i))$ for all $(s,x)\in \Lambda^{\hat{t}}\setminus \{(\hat{t},\hat{x})\}$.

        \end{description}
\end{lemma}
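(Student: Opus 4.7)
The plan is to adapt the classical inductive proof of the Borwein--Preiss variational principle (Theorem 2.5.2 in \cite{bor1}) to the complete metric space $(\Lambda^t,d_\infty)$. The only substantive modification is that, at the $(n+1)$-st step of the induction, the search for the next iterate will be restricted to $\Lambda^{t_n}$ rather than the full space $\Lambda^t$; this single device automatically forces the selected times to form a non-decreasing sequence, places their limit at $\hat t$, and makes the strict inequality in (iii) apply on $\Lambda^{\hat t}$.

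I would set $F_{-1}:=f$ and, for $n\ge 0$,
$$F_n(s,x):=f(s,x)-\sum_{i=0}^{n}\delta_i\rho\big((s,x),(t_i,x^i)\big),$$
and inductively define the nested sets
$$S_{n+1}:=\big\{(s,x)\in\Lambda^{t_n}:F_n(s,x)\ge F_{n-1}(t_n,x^n)\big\},$$
which are closed (upper semicontinuity of $f$, continuity of $\rho$) and nonempty (each contains $(t_n,x^n)$, the defining inequality reducing to $0\ge 0$). At each step I would pick $(t_{n+1},x^{n+1})\in S_{n+1}$ with $F_n(t_{n+1},x^{n+1})\ge \sup_{S_{n+1}}F_n-\mu_{n+1}$, where $\mu_{n+1}:=\varepsilon\delta_{n+1}/(2^{n+1}\delta_0)$; this specific scale is calibrated to reproduce exactly the bounds $\varepsilon/(2^i\delta_0)$ in~(i). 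A short rearrangement of the defining inequality of $S_{n+1}$, combined with the near-maximality of $(t_n,x^n)$ in $S_n$, produces both the inclusion $S_{n+1}\subseteq S_n$ and the a priori bound $\delta_n\rho((s,x),(t_n,x^n))\le\mu_n$ valid for every $(s,x)\in S_{n+1}$. The gauge-type property of $\rho$ then forces the $d_\infty$-diameter of $S_n$ to shrink to zero, so completeness of $(\Lambda^t,d_\infty)$ and Cantor's intersection theorem yield $\bigcap_n S_n=\{(\hat t,\hat x)\}$.

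From here, items (i) and (ii) fall out quickly. The $\rho$-bound applied at $(s,x)=(\hat t,\hat x)$ gives $\rho((\hat t,\hat x),(t_i,x^i))\le\varepsilon/(2^i\delta_0)$; the monotonicity $t_n\uparrow\hat t$ is automatic since $(t_{n+1},x^{n+1})\in\Lambda^{t_n}$ by construction and the gauge property forces $\{t_n\}$ to be Cauchy; the bound $\delta_0\rho((\hat t,\hat x),(t_0,x^0))\le f(\hat t,\hat x)-f(t_0,x^0)\le\varepsilon$ coming from $(\hat t,\hat x)\in S_1$ completes (i). Item~(ii) follows by telescoping $F_{n-1}(\hat t,\hat x)\ge F_{n-2}(t_{n-1},x^{n-1})\ge\cdots\ge F_{-1}(t_0,x^0)=f(t_0,x^0)$ and passing $n\to\infty$; the series converges because its partial sums are non-negative and dominated by $f(\hat t,\hat x)-f(t_0,x^0)$.

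The main difficulty is item~(iii), specifically preserving strictness across an infinite summation. For $(s,x)\in\Lambda^{\hat t}\setminus\{(\hat t,\hat x)\}$, note that $(s,x)\in\Lambda^{t_n}$ for every $n$ (since $t_n\le\hat t$), so by uniqueness of the intersection there exists $N$ with $(s,x)\notin S_N$, giving the strict inequality $F_{N-1}(s,x)<F_{N-2}(t_{N-1},x^{N-1})$. The key device for pushing this to the limit is to use the anchor $F_{N-2}(t_{N-1},x^{N-1})$ as a bridge. On one side, the monotonicity $F_m(s,x)\le F_{N-1}(s,x)$ for $m\ge N-1$ yields
$$f(s,x)-\sum_{i=0}^{\infty}\delta_i\rho\big((s,x),(t_i,x^i)\big)\le F_{N-1}(s,x)<F_{N-2}(t_{N-1},x^{N-1}).$$
On the other side, the telescoping chain used for (ii), applied for all indices $m\ge N$, gives $F_{m-1}(\hat t,\hat x)\ge F_{N-2}(t_{N-1},x^{N-1})$, and letting $m\to\infty$ produces
$$f(\hat t,\hat x)-\sum_{i=0}^{\infty}\delta_i\rho\big((\hat t,\hat x),(t_i,x^i)\big)\ge F_{N-2}(t_{N-1},x^{N-1}).$$
Concatenating the two inequalities through this bridge delivers the strict conclusion (iii).
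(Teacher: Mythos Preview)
Your proposal is correct and follows exactly the approach the paper intends: the paper omits the proof and simply refers to Lemma~2.13 in \cite{zhou5} and Theorem~2.5.2 in \cite{bor1}, i.e., the standard Borwein--Preiss construction with the single twist of restricting each successive search to $\Lambda^{t_n}$ so as to force $t_i\uparrow\hat t$ and to obtain the strict maximum on $\Lambda^{\hat t}$. Your handling of (iii)---freezing a finite index $N$ at which $(s,x)\notin S_N$ and then sandwiching through the bridge value $F_{N-2}(t_{N-1},x^{N-1})$---is precisely the classical device, and the remaining details (closedness of $S_n$ in the complete space $(\Lambda^t,d_\infty)$, Cantor's intersection, convergence of the penalty series via boundedness of partial sums) are all sound.
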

The proof is completely  similar to Lemma 2.13 in our paper \cite{zhou5} (see also Theorem 2.5.2 in  Borwein \& Zhu  \cite{bor1}). Here we omit it.
\section{Smooth  gauge-type functions.}
In this section we introduce the  functionals $S_m$, which are the key to proving  the uniqueness and  stability  of viscosity solutions.
\par
 For every $m\in \mathbb{N}^+$, we define $S_m:\hat{\Lambda}\times \hat{\Lambda}\rightarrow \mathbb{R}$ as follows: for every $(t,x), (s,y)\in \hat{\Lambda}$,
 \begin{eqnarray*}
S_m(t,x,s,y)=\begin{cases}
            \frac{(|x_{(s-t)\vee0}-y_{(t-s)\vee0}|_C^{2m}-|x(0)-y(0)|^{2m})^3}{|x_{(s-t)\vee0}-y_{(t-s)\vee0}|^{4m}_{C}}, \
         ~~ |x_{(s-t)\vee0}-y_{(t-s)\vee0}|_{C}\neq0; \\
0, ~~~~~~~~~~~~~~~~~~~~~~~~~~~~~~~~~~~~~~~~~~~~~ |x_{(s-t)\vee0}-y_{(t-s)\vee0}|_{C}=0.
\end{cases}
\end{eqnarray*}
For every $M\in \mathbb{R}$, define $\Upsilon^{m,M}$ and $\overline{\Upsilon}^{m,M}$ by
\begin{eqnarray*}
         \Upsilon^{m,M}(t,x,s,y)=S_m(t,x,s,y)+M|x(0)-y(0)|^{2m}, \ \ \ (t,x), (s,y)\in \hat{\Lambda},
\end{eqnarray*}
and
\begin{eqnarray*}
         \overline{\Upsilon}^{m,M}(t,x,s,y)= \Upsilon^{m,M}(t,x,s,y)+|s-t|^2 \ \ \ (t,x), (s,y)\in \hat{\Lambda}.
\end{eqnarray*}
For notational simplicity, we let $S_m(x,y)$, $\Upsilon^{m,M}(x,y)$ and $\overline{\Upsilon}^{m,M}(x,y)$ denote $S_m(t,x,t,y)$,\\
 $\Upsilon^{m,M}(t,x,t,y)$ and $\overline{\Upsilon}^{m,M}(t,x,t,y)$ for all $(t,x,y)\in [0,\infty)\times{\cal{D}}_0\times{\cal{D}}_0$, respectively.
  Moreover, we let  $S_m(x)$ and $\Upsilon^{m,M}(x)$ denote $S_m(x,y)$ and $\Upsilon^{m,M}(x,y)$ respectively when $y(\theta)\equiv{\mathbf{0}}$ for all $\theta\in (-\infty,0]$.  we also let $S$, $\Upsilon$ and $\overline{\Upsilon}$ denote $S_3$, $\Upsilon^{3,3}$ and $\overline{\Upsilon}^{3,3}$, respectively.
\par
Now we study the   regularity  of $S_m$.
\begin{lemma}\label{theoremS}
For every fixed $(\hat{t}, a) \in \hat{\Lambda}$, define   $S_m^{\hat{t},a}:\hat{\Lambda}^{\hat{t}}\rightarrow \mathbb{R}$ by
 $$
 S_m^{\hat{t},a}(t,x):=S_m(t,x,\hat{t},a),\ \ (t,x)\in \hat{\Lambda}^{\hat{t}}.
 $$
 Then
 $S_m^{\hat{t},a}(\cdot,\cdot)\in C^{1,2}_{p}(\hat{\Lambda}^{\hat{t}})$. Moreover, for every $M\geq3$,
\begin{eqnarray}\label{s0}
                |x|_{C}^{2m}\leq   \Upsilon^{m,M}(x)\leq M|x|_{C}^{2m}, \ \ x\in {\cal{D}}_0.
\end{eqnarray}
\end{lemma}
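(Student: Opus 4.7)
The plan is to split the argument into the smoothness of $S_m^{\hat t,a}$ and the two-sided bounds on $\Upsilon^{m,M}(x)$. On $\hat{\Lambda}^{\hat t}$ we have $t\geq\hat t$, so $(\hat t-t)\vee 0=0$, and I would abbreviate $A(t,x):=|x-a_{t-\hat t}|_C^{2m}$ and $B(t,x):=|x(0)-a(0)|^{2m}$, so that $S_m^{\hat t,a}(t,x)=(A-B)^3/A^2$ on $\{A>0\}$ and $0$ elsewhere. The cubic numerator $(A-B)^3$ is precisely what provides the regularity, because $A-B$ degenerates exactly where $|x-a_{t-\hat t}|_C$ fails to be smooth.

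For the horizontal derivative, the key identity is $(a_h)_{h'}=a_{h+h'}$, easily verified from the definition of the $h$-shift. Combined with $|f_h|_C=|f|_C$ for every $f\in{\cal D}_0$, this shows $A(t+h,x_h)=A(t,x)$ and $B(t+h,x_h)=B(t,x)$ for all $h\geq 0$, hence $\partial_t S_m^{\hat t,a}\equiv 0$. For the vertical derivatives, I would fix $(t,x)$ and write $v:=x(0)-a(0)$ and $\alpha:=\sup_{\theta<0}|(x-a_{t-\hat t})(\theta)|$, so that $A=\max(|v|,\alpha)^{2m}$ and $B=|v|^{2m}$. The perturbation $x\mapsto x+he_i\mathbf{1}_{\{0\}}$ only moves $v$, leaving $\alpha$ fixed, and I would perform a case analysis: if $|v|>\alpha$ then $S_m^{\hat t,a}\equiv 0$ locally and all derivatives vanish; if $|v|<\alpha$ then $A\equiv\alpha^{2m}$ is locally constant and $S_m^{\hat t,a}=(\alpha^{2m}-|v|^{2m})^3/\alpha^{4m}$ is a smooth function of $v$ alone, which I differentiate explicitly; if $|v|=\alpha$ then perturbing either moves $|v+he_i|$ above $\alpha$ (where $S_m^{\hat t,a}=0$) or below $\alpha$ (where $A-B=O(h)$ and $S_m^{\hat t,a}=O(h^3)$), which forces both $\partial_x$ and $\partial_{xx}$ to vanish and matches the limit of the interior formulas (each of which contains a factor $A-B$). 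Continuity in $(t,x)$ under $d_\infty$ then follows from the estimate $\big||x^n-a_{t_n-\hat t}|_C-|x-a_{t-\hat t}|_C\big|\leq |x^n-x_{t_n-t}|_C$, valid because $a_{t_n-\hat t}=(a_{t-\hat t})_{t_n-t}$ and $|(\cdot)_h|_C=|\cdot|_C$ on ${\cal D}_0$. Polynomial growth is read off the explicit formulas: $|S_m^{\hat t,a}|\leq A$, $|\partial_x S_m^{\hat t,a}|\leq C_m|v|^{2m-1}$ and $|\partial_{xx}S_m^{\hat t,a}|\leq C_m\alpha^{2m-2}$, all bounded by $C(|x|_C+|a|_C)^{2m}$.

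For the inequality $|x|_C^{2m}\leq\Upsilon^{m,M}(x)\leq M|x|_C^{2m}$, write $\bar a:=|x|_C^{2m}$ and $\bar b:=|x(0)|^{2m}$, so $0\leq\bar b\leq\bar a$ and $\Upsilon^{m,M}(x)=(\bar a-\bar b)^3/\bar a^2+M\bar b$ (the case $\bar a=0$ being trivial). The upper bound follows from $(\bar a-\bar b)^3\leq\bar a^2(\bar a-\bar b)$, which yields $\Upsilon^{m,M}(x)\leq\bar a+(M-1)\bar b\leq M\bar a$. For the lower bound, dividing through by $\bar a$ and setting $\tau:=\bar b/\bar a\in[0,1]$, one needs $\varphi(\tau):=(1-\tau)^3+M\tau\geq 1$; since $\varphi(0)=1$ and $\varphi'(\tau)=M-3(1-\tau)^2\geq M-3\geq 0$ on $[0,1]$, $\varphi$ is non-decreasing, which is precisely where the hypothesis $M\geq 3$ enters.

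The main technical obstacle is the vertical case $|v|=\alpha$: one must verify that the cube in the numerator is strong enough to kill the non-smoothness of the supremum norm at this transition set and that the smooth expressions from the region $|v|<\alpha$ extend continuously to it with the correct boundary values. Without the cubic power one would obtain only a Lipschitz, not $C^{1,2}$, functional.
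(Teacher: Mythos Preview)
Your proposal is correct and follows essentially the same route as the paper: the same case analysis on $|v|$ versus $\alpha$ for the vertical derivatives (exploiting that the cubic factor $(A-B)^3$ absorbs the non-smoothness at the transition $|v|=\alpha$), the same observation that $\partial_t S_m^{\hat t,a}\equiv 0$ via invariance under the $h$-shift, and the same monotonicity argument in $\bar b$ (equivalently in $\tau=\bar b/\bar a$) for the two-sided bound on $\Upsilon^{m,M}$, where $M\geq 3$ is exactly what makes the derivative nonnegative. Your treatment of $d_\infty$-continuity via $a_{t_n-\hat t}=(a_{t-\hat t})_{t_n-t}$ and $|(\cdot)_h|_C=|\cdot|_C$ is a bit more explicit than the paper's, which simply asserts continuity.
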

\par
   {\bf  Proof  }. \ \
   First, by the definition of $S^{\hat{t},a}_m$, it is clear that $S_m^{\hat{t},a}(\cdot,\cdot)\in C(\hat{\Lambda}^{\hat{t}})$ and $\partial_tS_m^{\hat{t},a}(t,x)=0$ for
   $(t,x)\in \hat{\Lambda}^{\hat{t}}$.
  Second, we consider $ \partial_{x_i}S_m^{\hat{t}, a}(\cdot,\cdot)$.
  For every $x\in {\cal{D}}_0$, let $|x|_{C^-}:=\sup_{-\infty< s<0}|x(s)|$ 
   and $x_i(0):=(x(0),e_i)_{\mathbb{R}^d},\ i=1,2,\ldots, d$.  
  If $|x(0)-a(0)|<|x-a_{t-\hat{t}}|_{C^-}$,
   \begin{eqnarray}\label{s1}
   &&\partial_{x_i}S_m^{\hat{t}, a}(t,x)
   =\lim_{h\rightarrow0}\frac{S_m(t,x+{h}e_i{\mathbf{1}}_{\{0\}},{{\hat{t}}},a)-S_m(t,x,{{\hat{t}}},a)}{h}\nonumber\\
   &=&\lim_{h\rightarrow0}\frac{{(|x-a_{t-\hat{t}}|^{2m}_{C}-|x(0)+{he_i}-a(0)|^{2m})^3}
   -{(|x-a_{t-\hat{t}}|^{2m}_{C}-|x(0)-a(0)|^{2m})^3}}{h|x-a_{t-\hat{t}}|^{4m}_{C}}\nonumber\\
   &=&-\frac{6m(|x-a_{t-\hat{t}}|^{2m}_{C}-|x(0)-a(0)|^{2m})^2|x(0)-a(0)|^{2m-2}(x_i(0)-a_i(0))}{|x-a_{t-\hat{t}}|^{4m}_{C}};
   \end{eqnarray}
 if  $|x(0)-a(0)|>|x-a_{t-\hat{t}}|_{C^-}$,
\begin{eqnarray}\label{s2}
  \partial_{x_i}S_m^{\hat{t},a}({t},x)=0;
   \end{eqnarray}
if  $|x(0)-a(0)|=|x-a_{t-\hat{t}}|_{C^-}>0$,
since
\begin{eqnarray}\label{jiaxis}
&&|x-a_{t-\hat{t}}|^{2m}_{C}-|x(0)+{he_i}-a(0))|^{2m}\nonumber
\\
&=&
\begin{cases}
0,\ \ \ \ \ \ \ \ \ \  \ \ \ \ \  \ \  \ \ \ \ \ \ \ \ \ \ \ \ ~~~ ~\ \ \ \ \ \  \ \ \  \ \ \ \ \ \ \ \ \ |x(0)+he_i-a(0)|\geq |x(0)-a(0)|,\\
 |x(0)-a(0)|^{2m}-|x(0)+{he_i}-a(0)|^{2m}, \    \ |x(0)+he_i-a(0)|<|x(0)-a(0)|,\end{cases}
\end{eqnarray}
we have
\begin{eqnarray}\label{s3}
 0&\leq&\lim_{h\rightarrow0}\bigg{|}\frac{S_m^{\hat{t},a}(t,x+{h}e_i{\mathbf{1}}_{\{0\}})-S_m^{\hat{t},a}(t,x)}{h}\bigg{|}\nonumber\\
  &\leq&\lim_{h\rightarrow0}\bigg{|}\frac{(|x(0)-a(0)|^{2m}-|x(0)+{he_i}-a(0)|^{2m})^3}{h|x-a_{t-\hat{t}}+he_i\mathbf{1}_{\{0\}}|_{C}^{4m}} \bigg{|}=0;
   \end{eqnarray}
    if $|x(0)-a(0)|=|x-a_{t-\hat{t}}|_{C^-}=0$,
\begin{eqnarray}\label{ss4}
 \partial_{x_i}S_m^{\hat{t},a}({t},x)=0.
   \end{eqnarray}
From (\ref{s1}), (\ref{s2}), (\ref{s3}) and (\ref{ss4}) we obtain that
\begin{eqnarray}\label{0528a}
    \partial_{x_i}S_m^{\hat{t},a}(t,x)=\begin{cases}-\frac{6m(|x-a_{t-\hat{t}}|^{2m}_{C}-|x(0)-a(0)|^{2m})^2|x(0)-a(0)|^{2m-2}(x_i(0)-a_i(0))}{|x-a_{t-\hat{t}}|^{4m}_{C}}, \  \ \  |x-a_{t-\hat{t}}|_{C}\neq0,\\
    0, ~~~~~~~~~~~~~~~~~~~~~~~~~~~~~~~~~~~~~~~~~~~~~~~~~~~~~~~~~~~~~~~~~~~~~~|x-a_{t-\hat{t}}|_{C}=0.
    \end{cases}
\end{eqnarray}
It is clear that $\partial_{x_i}S_m^{\hat{t},a}(\cdot,\cdot)\in C(\hat{\Lambda}^{\hat{t}})$.
\par
We now consider $\partial_{x_jx_i}S_m^{\hat{t},a}(\cdot,\cdot)$.
If $|x(0)-a(0)|<|x-a_{t-\hat{t}}|_{C^-}$,
\begin{eqnarray}\label{s5}
   &&\partial_{x_jx_i}S_m^{\hat{t},a}(t,x)\nonumber\\
   &=&\lim_{h\rightarrow0}\bigg{[}\frac{{-6m(|x-a_{t-\hat{t}}|_{C}^{2m}-|x(0)+he_j-a(0)|^{2m})^2|x(0)+he_j-a(0)|^{2m-2}}
   }{h{|x-a_{t-\hat{t}}|_{C}^{4m}}}\nonumber\\
    &&~~~~~~~\times
   (x_i(0)-a_i(0)+h\mathbf{1}_{\{i=j\}})\nonumber\\
   &&~~~~~~~+\frac{
   {6m(|x-a_{t-\hat{t}}|_{C}^{2m}-|x(0)-a(0)|^{2m})^2|x(0)-a(0)|^{2m-2}(x_i(0)-a_i(0))}}{h{|x-a_{t-\hat{t}}|_{C}^{4m}}}\bigg{]}\nonumber\\
   &=&\frac{24m^2(|x-a_{t-\hat{t}}|_{C}^{2m}-|x(0)-a(0)|^{2m})|x(0)-a(0)|^{4m-4}(x_i(0)-a_i(0))(x_j(0)-a_j(0))}
   {{|x-a_{t-\hat{t}}|_{C}^{4m}}}\nonumber\\
   &&-\frac{12m(m-1)(|x-a_{t-\hat{t}}|_{C}^{2m}-|x(0)-a(0)|^{2m})^2|x(0)-a(0)|^{2m-4}
   (x_i(0)-a_i(0))}{{|x-a_{t-\hat{t}}|_{C}^{4m}}}\nonumber\\
   &&\times (x_j(0)-a_j(0))\nonumber\\
   &&-\frac{6m(|x-a_{t-\hat{t}}|_{C}^{2m}-|x(0)-a(0)|^{2m})^2|x(0)-a(0)|^{2m-2}{\mathbf{1}}_{\{i=j\}}}{{|x-a_{t-\hat{t}}|_{C}^{4m}}};
   \end{eqnarray}
   if $|x(0)-a(0)|>|x-a_{t-\hat{t}}|_{C^-}$,
\begin{eqnarray}\label{s4}
   \partial_{x_jx_i}S_m^{\hat{t},a}(t,x)=0;
   \end{eqnarray}
 if $|x(0)-a(0)|=|x-a_{t-\hat{t}}|_{C^-}>0$, by (\ref{jiaxis}),
we have
\begin{eqnarray}\label{s6666}
 0&\leq&\lim_{h\rightarrow0}\bigg{|}\frac{\partial_{x_i}S_m^{\hat{t},a}(t,x+{h}e_j{\mathbf{1}}_{\{0\}})-\partial_{x_i}S_m^{\hat{t},a}(t,x)}{h}\bigg{|}\nonumber\\
  &\leq&\lim_{h\rightarrow0}6m\bigg{|}\frac{(|x(0)-a(0)|^{2m}-|x(0)+{he_i}-a(0)|^{2m})^2|x(0)+{he_j}-a(0)|^{2m-2}}{h|x-a_{t-\hat{t}}+he_j\mathbf{1}_{\{0\}}|_{C}^{4m}} \nonumber\\
  &&\times (x_i(0)-a_i(0)+h{\mathbf{1}}_{\{i=j\}})\bigg{|}=0;
   \end{eqnarray}
    if $|x(0)-a(0)|=|x-a_{t-\hat{t}}|_{C^-}=0$,
\begin{eqnarray}\label{ss42}
\partial_{x_jx_i}S_m^{\hat{t},a}(t,x)=0.
   \end{eqnarray}
%
%
   Combining (\ref{s5}), (\ref{s4}), (\ref{s6666}) and (\ref{ss42}) we obtain
   \begin{eqnarray}\label{0528b}
    \partial_{x_jx_i}S_m^{\hat{t},a}(t,x)=\begin{cases}\frac{24m^2(|x-a_{t-\hat{t}}|_{C}^{2m}-|x(0)-a(0)|^{2m})|x(0)-a(0)|^{4m-4}(x_i(0)-a_i(0))(x_j(0)-a_j(0))}
   {{|x-a_{t-\hat{t}}|_{C}^{4m}}}\\
   -\frac{12m(m-1)(|x-a_{t-\hat{t}}|_{C}^{2m}-|x(0)-a(0)|^{2m})^2|x(0)-a(0)|^{2m-4}
   (x_i(0)-a_i(0))(x_j(0)-a_j(0))}{{|x-a_{t-\hat{t}}|_{C}^{4m}}}\\
   -\frac{6m(|x-a_{t-\hat{t}}|_{C}^{2m}-|x(0)-a(0)|^{2m})^2|x(0)-a(0)|^{2m-2}{\mathbf{1}}_{\{i=j\}}}{{|x-a_{t-\hat{t}}|_{C}^{4m}}}, ~~  |x-a_{t-\hat{t}}|_{C}\neq0,\\
    0, ~~~~~~~~~~~~~~~~~~~~~~~~~~~~~~~~~~~~~~~~~~~~~~~~~~~~~~~~~~~~~~|x-a_{t-\hat{t}}|_{C}=0.
    \end{cases}
\end{eqnarray}
It is clear that $\partial_{x_jx_i}S_m^{\hat{t},a}(\cdot,\cdot)\in C(\hat{\Lambda}^{\hat{t}})$.
  By  simple calculation, we can see that  $S_m^{\hat{t},a}(\cdot,\cdot)$ and all of its derivatives grow  in a polynomial way.
 Thus, we have show that $S_m^{\hat{t},a}(\cdot,\cdot)\in C^{1,2}_{p}(\hat{\Lambda}^{\hat{t}})$.
 %
%
%
\par
Now we prove (\ref{s0}).  If $|x|_{C}=0$, it is clear that (\ref{s0}) holds. Then we may assume that
       $|x|_{C}\neq0$.
Letting $\alpha:=|x(0)|^{2m}$, we have
 \begin{eqnarray*}
    \Upsilon^{m,M}(x)=\frac{(|x|_C^{2m}-|x(0)|^{2m})^3}{|x|_C^{4m}}+M|x(0)|^{2m}:=f(\alpha)=\frac{(|x|_C^{2m}-\alpha)^3}{|x|_C^{4m}}+M\alpha.
\end{eqnarray*}
By $$
              f'(\alpha)=-3\frac{(|x|_C^{2m}-\alpha)^2}{|x|_C^{4m}}+M\geq0, \ \ \ M\geq3, \ \  0\leq \alpha\leq |x|_C^{2m},
$$
we get that
$$
                                 |x|_C^{2m} =f(0)\leq\Upsilon^{m,M}(x)=f(\alpha)\leq f(|x|_C^{2m})=M|x|_C^{2m},\ \ \  M\geq3, \ \ x\in {\cal{D}}_0.
$$
Thus, we  have (\ref{s0}) holds true.
 The proof is now complete. \ \ $\Box$
 \begin{remark}\label{remarks}
 \begin{description}
   \item{(i)}   
 For every fixed $m\in {\mathbb{N}}^+$ and $a_0 \in \mathbb{R}^d$, define   $f:{\hat{\Lambda}}\rightarrow \mathbb{R}$ by
 $$f(t,x):=|x(0)-a_0|^{2m},\ \ (t,x)\in {\hat{\Lambda}}.$$
 Notice that
\begin{eqnarray}\label{0612a}
\partial_tf(t,x)=0;
\end{eqnarray}
\begin{eqnarray}\label{0612b}
\partial_{x}f(t,x)=2m|x(0)-a_0|^{2m-2}(x(0)-a_0);
\end{eqnarray}
\begin{eqnarray}\label{0612c}
\partial_{xx}f(t,x)=2m|x(0)-a_0|^{2m-2}I+4m(m-1)|x(0)-a_0|^{2m-4}(x(0)-a_0)(x(0)-a_0)^\top. \
\end{eqnarray}
Then $f\in C_p^{1,2}(\hat{\Lambda})$, and  by the above lemma,  $\Upsilon^{m,M}(\cdot,\cdot,\hat{t},{{a}})\in C_p^{1,2}(\hat{\Lambda}^{\hat{t}})$ for all $m\in {\mathbb{N}}^+$, $M\in {\mathbb{R}}$
and $(\hat{t}, {{a}}) \in \hat{\Lambda}_{\hat{t}}$.
\item{(ii)}      Since $|\cdot|_{C}^6$ does not belong to $C^{1,2}_p(\hat{\Lambda})$, then, for every $(\hat{t},a)\in \Lambda$, $|x-a_{t-\hat{t}}|_C^6$ cannot  appear  as an auxiliary functional in the proof of the
    uniqueness and stability of viscosity solutions. However, by the above lemma and (i) of this remark, we can replace $|x-a_{t-\hat{t}}|_C^6$
   with its equivalent functional  $\Upsilon(t,x,\hat{t},a)$.
 \item{(iii)}
  It follows from (\ref{s0}) that, for all $(l,x),(s,y)\in \hat{\Lambda}$,
 \begin{eqnarray}\label{0612d}
         \overline{\Upsilon}(\gamma_t,\eta_s)={\Upsilon}(\gamma_{t,t\vee s}-\eta_{s,t\vee s})+|s-t|^2\geq |x_{(s-l)\vee0}-y_{(l-s)\vee0}|^6_C+|s-t|^2.
\end{eqnarray}
 Thus $\overline{\Upsilon}$ is a gauge-type function.  We can apply it to Lemma \ref{theoremleft} to
get a maximum of a perturbation of the auxiliary function in the proof of uniqueness.
 %
%
%
\item{(iv)}
        $\overline{\Upsilon}$ is not a  gauge-type function under usual norm $|\cdot|_1$, where $|(t,x)|_1=|t|+|x|_C$ for every $(t,x)\in \Lambda$. As a matter of fact, consider the following example. Take $(t_n,x^n), (s_n,y^n)\in\Lambda$ as follows:
       $$
             t_n=0, \ s_n=\frac{1}{n}, \  \ x^n(\theta)=(1+n\theta)\mathbf{1}_{[-\frac{1}{n},0]}(\theta), \ \ y^n(\theta)=[(2+n\theta)\wedge 1]\mathbf{1}_{[-\frac{2}{n},0]}(\theta),\  \ \theta\in (-\infty,0].
       $$
       It is clear that $\overline{\Upsilon}(t_n,x^n,s_n,y^n)=\frac{1}{n}\rightarrow0$ as $n\rightarrow\infty$. However,
       $$
       |(t_n,x^n)-(s_n,y^n)|_1=|t_n-s_n|+|x^n-y^n|_C=\frac{1}{n}+1\geq1.
       $$
       This shows that $\overline{\Upsilon}$ does not satisfy  condition (ii) of Definition \ref{gaupe} with  norm $|\cdot|_1$.
\end{description}
\end{remark}
\par
In the proof of uniqueness of viscosity solutions, in order to apply Theorem 8.3 in \cite{cran2}, we also need the following lemma.
Its proof is completely similar to the path-dependent  case (see Lemma 3.3 in \cite{zhou5}). Here we omit it.
\begin{lemma}\label{theoremS000} For $m\in \mathbb{N}^+$ and $M\geq3$, we have
\begin{eqnarray}\label{up}
\Upsilon^{m,M}(x+y)\leq 2^{2m-1}( \Upsilon^{m,M}(x)+ \Upsilon^{m,M}(y)) \ \ x,y\in {\cal{D}}_0.
\end{eqnarray}
\end{lemma}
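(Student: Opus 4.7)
The plan is to exploit the $2m$-homogeneity of $\Upsilon^{m,M}$ and reduce the inequality to convexity. Directly from the definition one sees $\Upsilon^{m,M}(\lambda x)=\lambda^{2m}\Upsilon^{m,M}(x)$ for every $\lambda\geq0$, because both the quotient $(|x|_{C}^{2m}-|x(0)|^{2m})^{3}/|x|_{C}^{4m}$ and the term $M|x(0)|^{2m}$ are $2m$-homogeneous (numerator scales as $\lambda^{6m}$, denominator as $\lambda^{4m}$). Applying this with $\lambda=2$ to $(x+y)/2$, the desired inequality is equivalent to the midpoint-convexity statement
\[
\Upsilon^{m,M}\bigl((x+y)/2\bigr)\leq\tfrac{1}{2}\bigl(\Upsilon^{m,M}(x)+\Upsilon^{m,M}(y)\bigr),
\]
so it suffices to show that $\Upsilon^{m,M}$ is convex on ${\cal D}_{0}$.

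Since $\Upsilon^{m,M}(x)$ depends on $x$ only through $|x|_{C}$ and $|x(0)|$, I write $\Upsilon^{m,M}(x)=g(|x|_{C},|x(0)|)$ with $g(A,a)=f(A^{2m},a^{2m})$ and
\[
f(u,v)=\frac{(u-v)^{3}}{u^{2}}+Mv,\qquad 0\leq v\leq u.
\]
The central step is that $f$ is convex on $\{0\leq v\leq u\}$. A short Hessian computation yields the factorization
\[
\mathrm{Hess}\,f(u,v)=\frac{6(u-v)}{u^{2}}\begin{pmatrix}(v/u)^{2} & -v/u\\ -v/u & 1\end{pmatrix},
\]
and the $2\times2$ matrix on the right is positive semidefinite (its determinant vanishes and its trace $1+(v/u)^{2}$ is positive, so it is rank-one PSD), while the scalar prefactor is nonnegative on the region; continuity handles the singular edges $u=0$ and $u=v$.

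It remains to propagate convexity from $f$ to $\Upsilon^{m,M}$. One verifies $\partial_{u}f=(u-v)^{2}(u+2v)/u^{3}\geq0$ and $\partial_{v}f=M-3(u-v)^{2}/u^{2}\geq M-3\geq0$, using precisely the hypothesis $M\geq3$, so $f$ is non-decreasing in each variable. Since $t\mapsto t^{2m}$ is convex and non-decreasing on $[0,\infty)$, the composition $g(A,a)=f(A^{2m},a^{2m})$ inherits convexity and coordinate-wise monotonicity. Finally $|x|_{C}$ and $|x(0)|$ are convex functionals on ${\cal D}_{0}$, so the standard composition rule for a convex non-decreasing outer function with convex inner functions gives convexity of $\Upsilon^{m,M}$. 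Combined with the $2m$-homogeneity this produces
\[
\Upsilon^{m,M}(x+y)=2^{2m}\Upsilon^{m,M}\bigl((x+y)/2\bigr)\leq 2^{2m-1}\bigl(\Upsilon^{m,M}(x)+\Upsilon^{m,M}(y)\bigr),
\]
as required. The main obstacle is the convexity of $f$: the monomial expansion $f=u+(M-3)v+3v^{2}/u-v^{3}/u^{2}$ is unhelpful because the piece $-v^{3}/u^{2}$ is concave in $v$, and the convexity of the whole only emerges once the Hessian is assembled and its rank-one structure is exposed.
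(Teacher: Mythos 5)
Your proof is correct. The paper gives no in-text argument for this lemma (it defers to Lemma 3.3 of the cited path-dependent paper), so there is no written proof to match approaches against; your derivation is a valid self-contained replacement. The reduction via the $2m$-homogeneity $\Upsilon^{m,M}(\lambda x)=\lambda^{2m}\Upsilon^{m,M}(x)$ to a midpoint-convexity statement is right, and the key computation is correct: one has $\partial_{uu}f=6(u-v)v^{2}/u^{4}$, $\partial_{uv}f=-6(u-v)v/u^{3}$, $\partial_{vv}f=6(u-v)/u^{2}$, giving exactly $\mathrm{Hess}\,f=\frac{6(u-v)}{u^{2}}\,w w^{\top}$ with $w=(-v/u,1)^{\top}$, hence PSD on $\{0<v<u\}$; continuity (or equivalently the degree-$1$ homogeneity of $f$) closes the boundary. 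The hypothesis $M\ge 3$ enters precisely where it must, to make $\partial_{v}f=M-3(u-v)^{2}/u^{2}\ge 0$. The two uses of the composition rule (convex, coordinate-wise non-decreasing outer with convex inner) are valid; the only point worth making explicit in a final write-up is that the convex combination $\lambda(|x|_{C},|x(0)|)+(1-\lambda)(|y|_{C},|y(0)|)$ stays inside the wedge $\{0\le a\le A\}$ on which $g$ is convex and monotone, which holds since $|z(0)|\le|z|_{C}$ for every $z\in{\cal D}_{0}$.
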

\section{ A DPP for optimal control problems.}

\par
In this section, we consider the controlled state
             equation (\ref{state1}) and cost functional  (\ref{cost1}).
We introduce the admissible control. Let $t$ be a deterministic time, $0\leq t<\infty$.
\begin{definition}
                An admissible control process $u(\cdot)=\{u(r),  r\in [t,\infty)\}$ on $[t,\infty)$  is an ${\cal{F}}_t^r$-progressing measurable process taking values in some Polish space $(U,d)$. The set of all admissible controls on $[t,\infty)$ is denoted by ${\cal{U}}_t$. We identify two processes $u(\cdot)$ and $\tilde{u}(\cdot)$ in ${\cal{U}}_t$
                and write $u(\cdot)\equiv\tilde{u}(\cdot)$ on $[t,\infty)$, if $\mathbb{P}(u(\cdot)=\tilde{u}(\cdot) \ a.e. \ \mbox{in}\ [t,\infty))=1$.
\end{definition}
 We make the following assumption.
\begin{hyp}\label{hypstate}
$ b: {\cal{C}}_0 \times U\rightarrow \mathbb{R}^d$, $\sigma: {\cal{C}}_0 \times U\rightarrow \mathbb{R}^{d\times n}$ and $
        q: {\cal{C}}_0\times U\rightarrow \mathbb{R}$  are continuous, and
                  there exists a  constant
                   $L>0$ 
                   such that, for all $(x,u)$,  $ (y,u) \in {\cal{C}}_0\times U$,
      \begin{eqnarray}\label{1207}
                &&|b(x,u)|^2\vee|\sigma(x,u)|_2^2\vee|q(x,u)|^2\leq
                 L^2(1+|x|_C^2),\\
                 &&|b(x,u)-b(y,u)|\vee|\sigma(x,u)-\sigma(y,u)|_2\vee|q(x,u)-q(y,u)|\leq
                 L|x-y|_C.\nonumber
\end{eqnarray}
\end{hyp}
For given $t\in[0,\infty)$, ${\cal{F}}_t$-measurable map $\xi:\Omega\rightarrow {\cal{C}}_0$ and admissible control $u(\cdot)\in {\cal{U}}_0$, consider the following stochastic differential equation (SDE) with infinite delay:
\begin{eqnarray}\label{state}
            \begin{cases}{dX^{t,\xi,u}(s)}=
            b{(}X^{t,\xi,u}_s,u(s){)}ds+\sigma(X^{t,\xi,u}_s,u(s))dW(s), \  s\in [t,\infty),\\
            ~~~~~X^{t,\xi,u}_t=\xi.
           \end{cases}
\end{eqnarray}
We first recall a result on the solvability of (\ref{state}) on a bounded interval.
\begin{lemma}\label{lemmaexist}
\ \ Take $p\geq2$ and assume that Hypothesis \ref{hypstate}  holds. Then for every $T>0$ and 
$(t,\xi,u(\cdot))\in [0,T)\times L^p(\Omega,{\cal{F}}_t;{\cal{C}}_0)\times {\cal{U}}_0$, equation (\ref{state}) admits a
unique strong  solution $X^{t,\xi,u}(\cdot)$ on $[t,T]$. 
 Furthermore, let  $X^{t,\xi',u}(\cdot)$  be the solution of equation(\ref{state})
 corresponding $(t,\xi',u(\cdot))\in [0,T)\times L^p(\Omega,{\cal{F}}_t;{\cal{C}}_0)\times {\cal{U}}_0$. Then the following estimates hold:
 \begin{eqnarray}\label{fbjia1}
               \mathbb{E}\left[\sup_{t\leq s\leq T}|X^{t,\xi,u}(s)-X^{t,\xi',u}(s)|^p\right]\leq C_p\mathbb{E}|\xi-\xi'|_C^p;
\end{eqnarray}
\begin{eqnarray}\label{fbjia2}
               \mathbb{E}\left[\sup_{t\leq s\leq T}|X^{t,\xi,u}(s)|^p\right]\leq C_p(1+\mathbb{E}|\xi|_C^p).
                \end{eqnarray}
             The constant $C_p$ depending only on  $p$, $T$ and $L$.
\end{lemma}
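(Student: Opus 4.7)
The plan is to apply the classical contraction-mapping approach adapted to the infinite-delay setting. Let $\mathcal{H}_p$ denote the complete metric space of $\{\mathcal{F}_t^s\}_{s\in[t,T]}$-progressively measurable continuous processes $Y:\Omega\times[t,T]\to\mathbb{R}^d$ with $Y(t)=\xi(0)$ a.s. and $\|Y\|_p^p:=\mathbb{E}\sup_{t\leq s\leq T}|Y(s)|^p<\infty$. Each $Y\in\mathcal{H}_p$ is extended to $(-\infty,T]$ by setting $Y(s)=\xi(s-t)$ for $s<t$, which ensures that $Y_s\in\mathcal{C}_0$ for every $s\in[t,T]$ (the limit at $-\infty$ inherits from $\xi\in\mathcal{C}_0$). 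I would then define the Picard operator
$$
(\Phi Y)(s):=\xi(0)+\int_t^s b(Y_r,u(r))\,dr+\int_t^s\sigma(Y_r,u(r))\,dW(r),\qquad s\in[t,T],
$$
and use the linear-growth bound in Hypothesis \ref{hypstate} together with the Burkholder--Davis--Gundy (BDG) inequality to verify that $\Phi$ maps $\mathcal{H}_p$ into itself.

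The key observation is that, for two elements $Y,Z\in\mathcal{H}_p$ starting from the \emph{same} $\xi$, the paths coincide on $(-\infty,t]$, hence $|Y_r-Z_r|_C=\sup_{t\leq\sigma\leq r}|Y(\sigma)-Z(\sigma)|$ for every $r\in[t,T]$. Combining this identity with the Lipschitz part of Hypothesis \ref{hypstate}, Jensen's inequality on the drift term, and BDG on the diffusion term yields an estimate of the form
$$
\mathbb{E}\sup_{t\leq s\leq\tau}|\Phi Y(s)-\Phi Z(s)|^p\leq K\int_t^{\tau}\mathbb{E}\sup_{t\leq\sigma\leq r}|Y(\sigma)-Z(\sigma)|^p\,dr,\qquad \tau\in[t,T],
$$
for a constant $K=K(p,T,L)$. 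Iterating this inequality $N$ times gives $\|\Phi^N Y-\Phi^N Z\|_p^p\leq K^N(T-t)^N/N!\cdot\|Y-Z\|_p^p$, so $\Phi^N$ is a strict contraction on $\mathcal{H}_p$ for $N$ sufficiently large. Banach's fixed-point theorem then delivers a unique strong solution $X^{t,\xi,u}\in\mathcal{H}_p$.

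For the stability estimate (\ref{fbjia1}), set $U:=X^{t,\xi,u}-X^{t,\xi',u}$. Now the two processes start from different initial segments, so the basic bound must be replaced by $|X^{t,\xi,u}_r-X^{t,\xi',u}_r|_C\leq|\xi-\xi'|_C+\sup_{t\leq\sigma\leq r}|U(\sigma)|$. Inserting this into BDG and applying the integral form of Gronwall's lemma yields (\ref{fbjia1}); the a priori bound (\ref{fbjia2}) is derived in exactly the same way, starting from BDG and the linear-growth bound $|b|^2\vee|\sigma|_2^2\leq L^2(1+|x|_C^2)$ applied to (\ref{state}) itself, and using $|X^{t,\xi,u}_r|_C\leq|\xi|_C+\sup_{t\leq\sigma\leq r}|X^{t,\xi,u}(\sigma)|$ before Gronwall. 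The main subtlety — and the only place where the infinite delay enters non-trivially — is the treatment of the supremum norm $|\cdot|_C$ over the unbounded past; however, because the segment on $(-\infty,t]$ is deterministically pinned down by $\xi$ (or $\xi'$), this norm reduces to an ordinary running supremum on $[t,r]$ (plus the fixed contribution $|\xi-\xi'|_C$ in the stability estimate), which is precisely what Gronwall requires. Once this reduction is made, the remainder of the argument is a routine adaptation of the classical well-posedness proof for Lipschitz SDEs.
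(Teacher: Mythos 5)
Your proof is correct, but the paper itself does not prove this lemma---it simply cites Theorem 3.1 and Lemma 3.2 of Wei \& Wang (2007) for the case $p=2$ and asserts that the case $p>2$ is analogous. Your argument is therefore a self-contained reconstruction of what the reference supplies, and it is the right one. The two-step reduction you isolate is exactly where the infinite delay enters: for two candidate solutions sharing the same initial segment $\xi$ the norm $|Y_r-Z_r|_C$ collapses to the running supremum $\sup_{t\le\sigma\le r}|Y(\sigma)-Z(\sigma)|$, while for different initial segments one pays only an additive $|\xi-\xi'|_C$; once this is in place the Picard/Banach iteration, BDG and Gronwall steps go through as for Lipschitz SDEs without delay and deliver (\ref{fbjia1})--(\ref{fbjia2}) uniformly for all $p\ge2$, which is cleaner than the paper's ``similar for $p>2$''. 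One cosmetic correction: the fixed-point space should consist of processes adapted to $\{{\cal F}_s\}_{s\ge t}$ rather than $\{{\cal F}_t^s\}_{s\in[t,T]}$, since both the admissible control $u(\cdot)\in{\cal U}_0$ and the initial datum $\xi\in L^p(\Omega,{\cal F}_t;{\cal C}_0)$ may depend on ${\cal F}_t$ and are thus in general not measurable with respect to the smaller filtration generated by the increments of $W$ after time $t$; with that change the contraction argument is unaffected.
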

{\bf  Proof}. \ \
                   In the case  of $p=2$, we refer to Theorem 3.1 in  \cite{wei} for existence and uniqueness of the solution to equation (\ref{state}) on $[t,T]$ and to Lemma 3.2 in   \cite{wei} for the Linear growth
                    of this solution on the initial datum. By the similar proving process of Lemma 3.2 in   \cite{wei}, we obtain (\ref{fbjia1}). 
                  The proof in the case of $p>2$ can be performed in a similar way.
                   \ \ $\Box$\\
                   Now we study equation (\ref{state}) and consider certain continuities for the solution $X^{t,\xi,u}$.
                      These
properties will be used in the proof of Theorem \ref{theoremv}.
                  \begin{theorem}\label{theoremexists1013} Take $p\geq2$ and suppose that Hypothesis \ref{hypstate} is satisfied,
                    then for every $(t,\xi,u(\cdot))\in [0,\infty)\times L^p(\Omega,{\cal{F}}_t;{\cal{C}}_0)\times {\cal{U}}_0$ and $\beta<-(\frac{5}{2}L^2+L)$,
                  equation (\ref{state}) has a unique  solution  $X^{t,\xi,u}(\cdot)$. Moreover, if we let  $X^{t,\eta,v}(\cdot)$  be the solutions of  (\ref{state})
 corresponding $(t,\eta,v(\cdot))\in [0,\infty)\times L^{p}(\Omega,{\cal{F}}_t;{\cal{C}}_0)\times {\cal{U}}_0$. Then the following estimates hold:
\begin{eqnarray}\label{linear}
\sup_{s\geq t}e^{2\beta s}\mathbb{E}\left[|X^{t,\xi,u}_s|_C^2|{\cal{F}}_t\right]+\int_t^{\infty}e^{2\beta l}\mathbb{E}[|X^{t,\xi,u}_l|_C^2|{\cal{F}}_t]dl
\le C(1+|\xi|_C^2);
\end{eqnarray}
and
\begin{eqnarray}\label{lipsss}
&&\sup_{s\geq t}e^{2\beta s}\mathbb{E}\left[|X^{t,\xi,u}_s-X^{t,\eta,v}_s|_C^2|{\cal{F}}_t\right]+\int_t^{\infty}e^{2\beta l}\mathbb{E}[|X^{t,\xi,u}_l-X^{t,\eta,v}_l|_C^2|{\cal{F}}_t]dl\nonumber\\
&\leq& C|\xi-\eta|_C^2+C\int^{\infty}_{t}e^{2\beta l}\mathbb{E}[|b(X^{t,\eta,v}_l,u(l))-b(X^{t,\eta,v}_l,v(l))|^2|{\cal{F}}_t]dl\nonumber\\
&&+C\int^{\infty}_{t}e^{2\beta l}\mathbb{E}[|\sigma(X^{t,\eta,v}_l,u(l))-\sigma(X^{t,\eta,v}_l,v(l))|_2^2|{\cal{F}}_t]dl.
\end{eqnarray}
The constant $C$ depends only on $\beta$ and $L$. Moreover, for all $s\geq t$, there exists some constant $C_0>0$ depending  only on $\beta$ and $L$ such that
\begin{eqnarray}\label{12071}
                   \mathbb{E}\left|X^{t,\xi,u}_s-\xi_{s-t}\right|_C^2
                   \leq C_0(1+\mathbb{E}|\xi|_C^2)e^{-2\beta s}((s-t)+1)(s-t).
\end{eqnarray}
\end{theorem}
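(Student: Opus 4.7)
The plan is to combine the local existence result of Lemma \ref{lemmaexist} with a weighted energy estimate that exploits the dissipativity built into the hypothesis $\beta<-(\frac{5}{2}L^2+L)$. For existence and uniqueness on $[t,\infty)$, Lemma \ref{lemmaexist} furnishes, for every $T>t$, a unique strong solution on $[t,T]$; by uniqueness, these solutions agree under restriction, so pasting them together yields a unique strong solution on $[t,\infty)$.

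For the a priori bound (\ref{linear}), I would apply It\^o's formula to $e^{2\beta r}|X^{t,\xi,u}(r)|^2$ on $[t,s]$, estimate the drift-diffusion term via Hypothesis \ref{hypstate} and Young's inequality as
\begin{eqnarray*}
2(X(r),b(X_r,u(r)))_{\mathbb{R}^d}+|\sigma(X_r,u(r))|_2^2\le 2L|X(r)|^2+(L+L^2)|X_r|_C^2+(L+L^2),
\end{eqnarray*}
then take $\sup_{t\le s'\le s}$ and conditional expectation given ${\cal F}_t$, and absorb the martingale contribution through the Burkholder-Davis-Gundy inequality combined with Young's inequality, moving a fraction of $\Psi(s):=\mathbb{E}[\sup_{t\le r\le s}e^{2\beta r}|X(r)|^2\mid{\cal F}_t]$ back to the left side. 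The pathwise estimate
\begin{eqnarray*}
e^{2\beta s}|X_s|_C^2\le |\xi|_C^2+\sup_{t\le r\le s}e^{2\beta r}|X(r)|^2\qquad(\beta<0),
\end{eqnarray*}
which holds because $X(r)=\xi(r-t)$ for $r\le t$ and $e^{2\beta s}\le e^{2\beta r}$ when $r\le s$, converts the delay term into an expression in $\Psi$ plus an $|\xi|_C^2$ datum. Careful bookkeeping then shows that the aggregate coefficient multiplying $|X_r|_C^2$ becomes strictly negative precisely under the threshold $\beta<-(\frac{5}{2}L^2+L)$, so a dissipative Gronwall inequality closes the loop and delivers both the supremum and the integrability bounds in (\ref{linear}) simultaneously.

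For (\ref{lipsss}), the same scheme is applied to $Y(r):=X^{t,\xi,u}(r)-X^{t,\eta,v}(r)$ after splitting
\begin{eqnarray*}
b(X^{t,\xi,u}_r,u(r))-b(X^{t,\eta,v}_r,v(r))=[b(X^{t,\xi,u}_r,u(r))-b(X^{t,\eta,v}_r,u(r))]+[b(X^{t,\eta,v}_r,u(r))-b(X^{t,\eta,v}_r,v(r))]
\end{eqnarray*}
and analogously for $\sigma$: the first (Lipschitz) brackets produce exactly the dissipative structure used for (\ref{linear}), while the second (inhomogeneous) brackets contribute the two non-homogeneous integrals on the right-hand side of (\ref{lipsss}). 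Finally, for (\ref{12071}) observe that $X^{t,\xi,u}(r)=\xi(r-t)$ for $r\le t$, so $|X^{t,\xi,u}_s-\xi_{s-t}|_C=\sup_{t\le r\le s}|X^{t,\xi,u}(r)-\xi(0)|$; writing $X(r)-\xi(0)=\int_t^r b\,du+\int_t^r\sigma\,dW$, Cauchy-Schwarz on the drift integral together with Doob's maximal inequality and the It\^o isometry on the stochastic integral yield
\begin{eqnarray*}
\mathbb{E}|X^{t,\xi,u}_s-\xi_{s-t}|_C^2\le 2L^2\bigl((s-t)+4\bigr)\int_t^s\bigl(1+\mathbb{E}|X^{t,\xi,u}_u|_C^2\bigr)du,
\end{eqnarray*}
and substituting the pointwise bound $\mathbb{E}|X^{t,\xi,u}_u|_C^2\le Ce^{-2\beta u}(1+\mathbb{E}|\xi|_C^2)$ coming from (\ref{linear}) (together with $\int_t^s e^{-2\beta u}du\le (s-t)e^{-2\beta s}$) produces the stated bound. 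The main obstacle is the dissipative Gronwall step in (\ref{linear}): one must choose the Young-type absorption parameters so that the effective coefficient on $|X_r|_C^2$ is strictly negative under the precise threshold $\beta<-(\frac{5}{2}L^2+L)$, and carefully track the $|X(r)|^2$ versus $|X_r|_C^2$ interplay through the pathwise dominating inequality so as not to lose the dissipativity when moving from the pointwise to the $\sup$-in-time bound.
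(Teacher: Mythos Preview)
Your treatment of existence and of (\ref{12071}) is essentially the paper's; the difficulty lies in (\ref{linear}) (and hence (\ref{lipsss})).

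There is a genuine gap in the energy argument you sketch. Applying the ordinary It\^o formula to $e^{2\beta r}|X(r)|^2$ gives a dissipative term $2\beta e^{2\beta r}|X(r)|^2$ built from the \emph{point value} $X(r)$, whereas the drift--diffusion bounds coming from Hypothesis~\ref{hypstate} produce $|X_r|_C^2$, the \emph{path} norm. Since $|X_r|_C\ge |X(r)|$ and $\beta<0$, the dissipation cannot absorb the delay term directly. Your proposed pathwise inequality $e^{2\beta r}|X_r|_C^2\le |\xi|_C^2+\sup_{t\le r'\le r}e^{2\beta r'}|X(r')|^2$ only turns $\int_t^s e^{2\beta r}\mathbb{E}|X_r|_C^2\,dr$ into $C|\xi|_C^2+\int_t^s\Psi(r)\,dr$; Gronwall on $\Psi$ then yields a bound growing like $e^{C(s-t)}$, not one uniform in $s$. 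The ``dissipative Gronwall'' step does not close: the negative integral you have is $-2|\beta|\int_t^s e^{2\beta r}\mathbb{E}|X(r)|^2\,dr$, which is dominated by $\int_t^s\Psi(r)\,dr$ rather than dominating it.

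The paper avoids this by replacing $|X(r)|^2$ with the smooth path functional $\Upsilon^{1,3}(X_r)=S_1(X_r)+3|X(r)|^2\in C^{1,2}_p$, which by Lemma~\ref{theoremS} satisfies $|X_r|_C^2\le\Upsilon^{1,3}(X_r)\le 3|X_r|_C^2$. Applying the \emph{functional} It\^o formula (Lemma~\ref{theoremito}) to $e^{2\beta l}\Upsilon^{1,3}(X_l)$ produces the dissipative term $2\beta e^{2\beta l}\Upsilon^{1,3}(X_l)$, which is now directly comparable with $e^{2\beta l}|X_l|_C^2$; together with $\partial_t\Upsilon^{1,3}=0$ and the pointwise bounds $|\partial_x\Upsilon^{1,3}(x)|\le 6|x|_C$, $\partial_{xx}\Upsilon^{1,3}(x)\le CI$, the whole inequality lives in the single scale $|X_l|_C^2$ and closes after one Young inequality, yielding the stated threshold on $\beta$. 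In short, the missing idea is to do It\^o calculus on a smooth surrogate for the sup-norm, not on the point value.
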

\begin{proof}
Existence and uniqueness are satisfied by Lemma \ref{lemmaexist}. We only need to prove that (\ref{linear}), (\ref{lipsss}) and (\ref{12071})
hold true.
	Applying It\^{o} formula  to $3e^{2\beta l}|X^{t,\xi,u}(l)|^2$ and functional It\^{o} formula (\ref{statesop0}) to $e^{2\beta l}S_{1}(X^{t,\xi,u}_l)$ on $[t,s]$, we obtain
	\begin{flalign*}
	&\quad e^{2\beta s}\Upsilon^{1,3}(X^{t,\xi,u}_s)-e^{2\beta t}\Upsilon^{1,3}(\xi)\\
	&= 2\beta\int_t^se^{2\beta l}\Upsilon^{1,3}(X^{t,\xi,u}_l)dl+\int_t^se^{2\beta l}(\partial_x\Upsilon^{1,3}(X^{t,\xi,u}_l),b(X^{t,\xi,u}_l,u(l)))_{\mathbb{R}^d} dl\\
	&\quad+\int_t^se^{2\beta l}(\partial_x\Upsilon^{1,3}(X^{t,\xi,u}_l),\sigma(X^{t,\xi,u}_l,u(l))dW(l))_{\mathbb{R}^d}\\
&\quad+\frac{1}{2}\int_t^se^{2\beta l}\mbox{tr}[\partial_{xx}\Upsilon^{1,3}(X^{t,\xi,u}_l)\sigma(X^{t,\xi,u}_l,u(l))\sigma^\top(X^{t,\xi,u}_l,u(l))]dl.
	\end{flalign*}
Taking conditional expectation with respect to ${\cal{F}}_t$, by  Hypothesis \ref{hypstate} and Lemma \ref{theoremS}, for every $\beta<0$ and $\varepsilon>0$,
\begin{flalign*}
	&\quad e^{2\beta s}\mathbb{E}\left[|X^{t,\xi,u}_s|_C^2|{\cal{F}}_t\right]\leq e^{2\beta s}\mathbb{E}\left[\Upsilon^{1,3}(X^{t,\xi,u}_s)|_C^2|{\cal{F}}_t\right]\\
	&\leq 3e^{2\beta t}|\xi|_C^2+6\beta\int_t^se^{2\beta l}\mathbb{E}[|X^{t,\xi,u}_l|_C^2|{\cal{F}}_t]dl\\
&\quad+6L\int_t^se^{2\beta l}\mathbb{E}[(|X^{t,\xi,u}_l|_C+|X^{t,\xi,u}_l|_C^2)|{\cal{F}}_t] dl
+15L^2\int_t^se^{2\beta l}(1+\mathbb{E}[|X^{t,\xi,u}_l|_C^2|{\cal{F}}_t])dl\\
&\leq 3|\xi|_C^2+(15L^2+6L+6\beta+\varepsilon)\int_t^se^{2\beta l}\mathbb{E}[|X^{t,\xi,u}_l|_C^2|{\cal{F}}_t]dl-\frac{9L^2}{2\beta\varepsilon}-\frac{15L^2}{2\beta}.
	\end{flalign*}
For every $\beta<-(\frac{5}{2}L^2+L)$, we can let $\varepsilon$ be small enough such that $15L^2+6L+6\beta+\varepsilon<0$, then there exists $C>0$ depending only on $\beta, L$ such that
$$
e^{2\beta s}\mathbb{E}\left[|X^{t,\xi,u}_s|_C^2|{\cal{F}}_t\right]+\int_t^se^{2\beta l}\mathbb{E}[|X^{t,\xi,u}_l|_C^2|{\cal{F}}_t]dl
\le C(1+|\xi|_C^2).
$$
Taking the supremum over  $s\in [t,\infty)$, we obtain (\ref{linear}). By the similar  process, we can show (\ref{lipsss})  holds true.
Now let us prove (\ref{12071}). 
By (\ref{1207}) and (\ref{linear}), we
                obtain the following result:
\begin{eqnarray*}
                   \mathbb{E}\left|X^{t,\xi,u}_s-\xi_{s-t}\right|_C^2
                   &\leq&2L^2\mathbb{E}\left(\int_t^s(1+|X^{t,\xi,u}_l|_C) dl\right)^2+8L^2\mathbb{E}\int_t^s(1+|X^{t,\xi,u}_l|_C^2) dl\\
               &\leq& 4L^2((s-t)+2)(s-t)(1+\mathbb{E}|X^{t,\xi,u}_s|_C^2)\\
               &\leq& 4L^2((s-t)+2)(s-t)(1+Ce^{-2\beta s}(1+\mathbb{E}|\xi|_C^2)).
\end{eqnarray*}
Then there exists a suitable constant $C_0>0$ depending  only on $\beta$ and $L$ such that (\ref{12071}) holds true.
 The proof is now complete.
\end{proof}
For the particular case of a deterministic $\xi$, i.e. $\xi=x\in {\cal{C}}_0$, we let $X^{t,x,u}(\cdot)$ denote the solution of equation (\ref{state}) corresponding $(t,x,u(\cdot))\in [0,\infty)\times {\cal{C}}_0\times {\cal{U}}_0$.
 For simplicity, if $t=0$ we denote $X^{0,x,u}(\cdot)$ by $X^{x,u}(\cdot)$.
Our first result for the value functional defined in (\ref{eq3}) includes   the local boundedness  and
                the
                continuity.
\begin{theorem}\label{theoremv}  Suppose that Hypothesis \ref{hypstate}   holds. Then, for every $\lambda> \Theta:=\frac{5}{2}L^2+L$, there exists a constant $C_1>0$  such that
\begin{eqnarray}\label{valuep1}
                         |V(x)|\leq C_1(1+|x|_{{C}}), \ \ x, y\in {\cal{C}}_0,
\end{eqnarray}
and
\begin{eqnarray}\label{valuep2}
                        |V(x)-V(y)|
                        \leq
                        C_1|x-y|_{{C}}, \  \ x, y\in {\cal{C}}_0.
\end{eqnarray}
\end{theorem}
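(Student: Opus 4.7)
The plan is to show that the bounds (\ref{valuep1}) and (\ref{valuep2}) hold uniformly in $u(\cdot)\in\mathcal{U}_0$ for the cost $J(x,u(\cdot))$, and then pass to the infimum. The engine is estimate (\ref{linear}) (linear growth) and (\ref{lipsss}) (continuous dependence) from Theorem \ref{theoremexists1013}, combined with the linear growth and Lipschitz conditions on $q$ in Hypothesis \ref{hypstate}.

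First I would fix any $\beta$ with $-\lambda<\beta<-\Theta$; such a $\beta$ exists precisely because $\lambda>\Theta$, and this is the only place where the hypothesis on $\lambda$ is used. For the linear growth of $V$, apply (\ref{linear}) with $t=0$ and $\xi=x$ to obtain
\begin{equation*}
\mathbb{E}|X^{x,u}_s|_C^2\leq Ce^{-2\beta s}(1+|x|_C^2),\qquad s\geq0.
\end{equation*}
Using $|q(y,u)|\leq L(1+|y|_C)$ from (\ref{1207}) and Cauchy--Schwarz, one gets
\begin{equation*}
|J(x,u(\cdot))|\leq L\int_0^\infty e^{-\lambda s}\bigl(1+(\mathbb{E}|X^{x,u}_s|_C^2)^{1/2}\bigr)ds\leq L\int_0^\infty e^{-\lambda s}ds+LC^{1/2}(1+|x|_C)\int_0^\infty e^{-(\lambda+\beta)s}ds,
\end{equation*}
and since $\lambda+\beta>0$, both integrals are finite, yielding the bound $|J(x,u(\cdot))|\leq C_1(1+|x|_C)$ with a constant independent of $u(\cdot)$. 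Taking the infimum over $\mathcal{U}_0$ gives (\ref{valuep1}).

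For the Lipschitz estimate, apply (\ref{lipsss}) with $t=0$, $\eta=y$, and the \emph{same} control $v(\cdot)=u(\cdot)$, which kills the two $b$- and $\sigma$-difference integrals on the right-hand side. This produces
\begin{equation*}
\mathbb{E}|X^{x,u}_s-X^{y,u}_s|_C^2\leq Ce^{-2\beta s}|x-y|_C^2,\qquad s\geq0.
\end{equation*}
The Lipschitz property of $q$ in (\ref{1207}) then gives
\begin{equation*}
|J(x,u(\cdot))-J(y,u(\cdot))|\leq L\int_0^\infty e^{-\lambda s}\mathbb{E}|X^{x,u}_s-X^{y,u}_s|_C\,ds\leq LC^{1/2}|x-y|_C\int_0^\infty e^{-(\lambda+\beta)s}ds,
\end{equation*}
again with convergent integral because $\lambda+\beta>0$. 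Since the resulting constant is independent of $u(\cdot)$, the elementary inequality $|\inf_u f(u)-\inf_u g(u)|\leq\sup_u|f(u)-g(u)|$ applied to $J(x,\cdot)$ and $J(y,\cdot)$ delivers (\ref{valuep2}).

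The only delicate point is the choice of $\beta$: the moment estimate (\ref{linear}) requires $\beta<-\Theta$, while integrability in $s$ requires $\beta>-\lambda$, and both can be satisfied simultaneously iff $\lambda>\Theta$. Everything else is a routine combination of linear growth, Lipschitz continuity, and Cauchy--Schwarz, so there is no substantial obstacle beyond this balancing.
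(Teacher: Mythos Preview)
Your proof is correct and follows essentially the same approach as the paper's: bound $|J(x,u(\cdot))-J(y,u(\cdot))|$ by $L\int_0^\infty e^{-\lambda l}\mathbb{E}|X^{x,u}_l-X^{y,u}_l|_C\,dl$ via the Lipschitz condition on $q$, then invoke the moment estimates of Theorem~\ref{theoremexists1013} to control the integral, and finally pass to the supremum over controls. The paper's proof is much terser---it simply writes ``According to Theorem~\ref{theoremexists1013}'' without spelling out the choice of $\beta$ or the Cauchy--Schwarz step---but your explicit identification of the window $-\lambda<\beta<-\Theta$ and the resulting convergence of $\int_0^\infty e^{-(\lambda+\beta)s}\,ds$ is exactly the mechanism that makes the argument work and clarifies precisely where the hypothesis $\lambda>\Theta$ enters.
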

{\bf  Proof}. \ \
                   From the definition of $J$ and from Hypothesis \ref{hypstate}, we know that, for all $u(\cdot)\in {\cal{U}}_0$,
\begin{eqnarray*}
                        |J(x,u(\cdot))-J(y,u(\cdot))|&\leq&\int_{0}^{\infty}e^{-\lambda l}{\mathbb{E}}|q(X^{x,u}_l,u(l))-q(X^{y,u}_l,u(l))|dl\\
                        &\leq&L\int_{0}^{\infty}e^{-\lambda
                        l}\mathbb{E}|X^{x,u}_l-X^{y,u}_l|_Cdl.
\end{eqnarray*}
                   According to   Theorem \ref{theoremexists1013}, we obtain that for
                   a constant $C_1>0$,
\begin{eqnarray*}
                        |V(x)-V(y)|\leq \sup_{u(\cdot)\in {\cal{U}}_0}|J(x,u(\cdot))-J(y,u(\cdot))|
                        \leq C_1|x-y|_C.
\end{eqnarray*}
 By a similar procedure, we can show
                   that (\ref{valuep1})  holds true, which completes the proof. \ \ $\Box$
\par
                        Now we present the following result, which is called the  dynamic programming principle  (DPP) for  optimal
                       control problems (\ref{state1}) and (\ref{cost1}).
   \begin{theorem}\label{theoremvalue}
                      Assume that Hypothesis \ref{hypstate}   holds true. Then, for every   $\lambda> \Theta$, we know that
\begin{eqnarray}\label{dpp}
             V(x)=\inf_{u(\cdot)\in {\cal{U}}_0}\bigg{[}\int_{0}^{t}e^{-\lambda l}\mathbb{E}q(X^{x,u}_l,u(l))dl
                   +e^{-\lambda t}\mathbb{E}V(X^{x,u}_t)\bigg{]}, \ \ (t,x)\in \Lambda.
\end{eqnarray}
\end{theorem}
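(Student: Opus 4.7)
The plan is to prove the two inequalities ``$\le$'' and ``$\ge$'' separately; the crucial ingredients are the flow property of (\ref{state}), namely $X^{x,u}(s)=X^{t,X^{x,u}_t,u}(s)$ a.s.\ for every $s\ge t$ (a direct consequence of pathwise uniqueness in Lemma \ref{lemmaexist}), a shift of the driving Brownian motion via $\tilde W(s):=W(s+t)-W(t)$, and the Lipschitz continuity of $V$ in (\ref{valuep2}). Splitting $J(x,u(\cdot))$ at time $t$ gives
\begin{equation*}
J(x,u(\cdot))=\int_0^t e^{-\lambda l}\mathbb{E}q(X^{x,u}_l,u(l))\,dl+e^{-\lambda t}\mathbb{E}\int_0^\infty e^{-\lambda s}q(\tilde X^{X^{x,u}_t,\tilde u}_s,\tilde u(s))\,ds,
\end{equation*}
where $\tilde u(s):=u(s+t)$ and $\tilde X^{y,v}$ denotes the solution of (\ref{state1}) driven by $\tilde W$ with initial datum $y$ and control $v$.

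For the direction $V(x)\ge$ RHS, I fix $u\in\mathcal{U}_0$ and condition on $\mathcal{F}_t$. Since $\tilde W$ is independent of $\mathcal{F}_t$, $X^{x,u}_t$ is $\mathcal{F}_t$-measurable, and after conditioning $\tilde u(\omega,\cdot)$ is progressively measurable with respect to the natural filtration of $\tilde W$, the inner integral becomes $J(\xi,\tilde u(\omega,\cdot))$ evaluated at $\xi=X^{x,u}_t(\omega)$ by the law-invariance of Brownian motion. The definition of $V$ in (\ref{eq3}) then yields $J(\xi,\tilde u(\omega,\cdot))\ge V(\xi)$, hence $\mathbb{E}[\,\cdot\mid\mathcal{F}_t]\ge V(X^{x,u}_t)$ $\mathbb{P}$-a.s. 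Taking expectations and then the infimum over $u$ gives $V(x)\ge$ RHS.

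The direction $V(x)\le$ RHS is where the main obstacle lies, because an $\varepsilon$-optimal control for initial data $X^{x,u}_t(\omega)$ must be selected \emph{measurably} in $\omega$. My plan is the standard countable-partition trick: using separability of $\mathcal{C}_0$, for each $\eta>0$ fix a Borel partition $\{A_i\}$ of $\mathcal{C}_0$ with diameters less than $\eta$, choose representatives $y_i\in A_i$, and pick $u^i\in\mathcal{U}_0$ satisfying $J(y_i,u^i)\le V(y_i)+\varepsilon$. Given any $u^0\in\mathcal{U}_0$, define the concatenated control $u^\varepsilon$ to equal $u^0$ on $[0,t)$ and, on $[t,\infty)$, to equal the shifted $u^i(\cdot-t)$ reinterpreted as a functional of $\tilde W$ on the event $\{X^{x,u^0}_t\in A_i\}$. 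Since $\{X^{x,u^0}_t\in A_i\}\in\mathcal{F}_t$ and each piece involves only $\tilde W$ on $[t,\infty)$, $u^\varepsilon\in\mathcal{U}_0$. Applying the $\mathcal{F}_t$-conditioning argument above in reverse and combining it with (\ref{valuep2}) and the Lipschitz dependence of $J(\cdot,u^i)$ on the initial datum supplied by Theorem \ref{theoremexists1013} produces
\begin{equation*}
J(x,u^\varepsilon)\le\int_0^t e^{-\lambda l}\mathbb{E}q(X^{x,u^0}_l,u^0(l))\,dl+e^{-\lambda t}\mathbb{E}V(X^{x,u^0}_t)+e^{-\lambda t}(\varepsilon+2C_1\eta).
\end{equation*}
Since $V(x)\le J(x,u^\varepsilon)$ and $u^0,\varepsilon,\eta$ are arbitrary, this delivers $V(x)\le$ RHS, completing the proof.
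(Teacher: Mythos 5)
The paper itself omits the proof of this theorem, instead citing Theorem~2.31 of Fabbri, Gozzi and \'{S}wi\c{e}ch \cite{fab1} and noting only that ${\cal{C}}_0$ is a separable metric (rather than Hilbert) space. Your proposal reproduces exactly the standard argument that the cited reference uses: the decomposition of $J$ at time $t$ via the flow property and time-shift of the Brownian motion, the conditioning argument for the ``$\ge$'' direction, and the countable Borel partition of the separable state space combined with the uniform (in $u$) Lipschitz dependence of $J(\cdot,u)$ and $V$ on the initial datum for the ``$\le$'' direction; so it is correct and matches the approach the paper relies on.
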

The proof is completely  similar to Theorem 2.31  in Fabbri, Gozzi and \'{S}wi\c{e}ch \cite{fab1}. The only difference being
that here the value functional $V$ is defined on a  separable metric space $({\cal{C}}_0,|\cdot|_C)$, while  the latter is defined on a  separable Hilbert space. Here we omit it.

             In Theorem \ref{theoremv} we have already seen that the
                       value functional $V(x)$ is Lipschitz
                       continuous in $x$. With the
                       help of Theorem \ref{theoremvalue}  now show that another  continuity
                       property of $V(x)$.
\begin{theorem}\label{theorem3.9}
                          Under Hypotheses \ref{hypstate},  for every $\delta\in [0,\infty), x\in {\cal{C}}_0$ and  $\lambda> \Theta$,
there is a constant $C'>0$ such that 
\begin{eqnarray}\label{hold}
                 |V(x)-V(x_\delta)|\leq
                                    C'(1+|x|_C)(\delta+\delta^{\frac{1}{2}}+1-e^{-\lambda \delta}).
\end{eqnarray}
\end{theorem}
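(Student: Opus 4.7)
My strategy is to apply the DPP from Theorem \ref{theoremvalue} at time $\delta$ to represent $V(x)$ in terms of a running cost on $[0,\delta]$ plus the discounted value at time $\delta$, then compare this representation with $V(x_\delta)$ using the key estimate (\ref{12071}) to control how far $X^{x,u}_\delta$ departs from the shifted initial path $x_\delta$. Concretely, for every $u(\cdot)\in{\cal{U}}_0$ the DPP gives $V(x)\leq \int_0^\delta e^{-\lambda l}\mathbb{E} q(X^{x,u}_l,u(l))\,dl+e^{-\lambda\delta}\mathbb{E} V(X^{x,u}_\delta)$; subtracting $V(x_\delta)$ and writing
$$e^{-\lambda\delta}\mathbb{E} V(X^{x,u}_\delta)-V(x_\delta)=e^{-\lambda\delta}\bigl(\mathbb{E} V(X^{x,u}_\delta)-V(x_\delta)\bigr)-(1-e^{-\lambda\delta})V(x_\delta)$$
reduces the problem to bounding three pieces.

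\textbf{Bounding the three terms.} I would control them using: (i) the linear growth and Lipschitz estimates (\ref{valuep1})--(\ref{valuep2}) for $V$ together with $|x_\delta|_C\leq|x|_C$ (which bounds the constant residual $(1-e^{-\lambda\delta})|V(x_\delta)|$); (ii) the linear growth of $q$ from Hypothesis \ref{hypstate} combined with the moment bound (\ref{linear}) for $\mathbb{E}|X^{x,u}_l|_C$ (which bounds the integral term); and (iii) the estimate (\ref{12071}) applied with $t=0$, $\xi=x$, $s=\delta$, which yields $\mathbb{E}|X^{x,u}_\delta-x_\delta|_C^2\leq C_0(1+|x|_C^2)e^{-2\beta\delta}(1+\delta)\delta$, together with the Lipschitz bound (\ref{valuep2}) for $V$ and Jensen's inequality (which bound the middle term). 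Fixing $\beta\in(-\lambda,-\Theta)$, a nonempty interval precisely because $\lambda>\Theta$, ensures $e^{-(\lambda+\beta)\delta}\leq 1$, so the growing factor $e^{-2\beta\delta}$ is absorbed into the discount $e^{-\lambda\delta}$. Using $\sqrt{(1+\delta)\delta}\leq \sqrt{\delta}+\delta$ and $\int_0^\delta e^{-(\lambda+\beta)l}\,dl\leq\delta$, these bounds combine to $V(x)-V(x_\delta)\leq C'(1+|x|_C)(\delta+\delta^{1/2}+1-e^{-\lambda\delta})$.

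\textbf{Reverse direction and main obstacle.} For the opposite inequality $V(x_\delta)-V(x)\leq\ldots$, for every $\varepsilon>0$ I would choose an $\varepsilon$-minimizer $u^\varepsilon\in{\cal{U}}_0$ in the DPP representation of $V(x)$ so that $V(x)+\varepsilon\geq \int_0^\delta e^{-\lambda l}\mathbb{E} q(X^{x,u^\varepsilon}_l,u^\varepsilon(l))\,dl+e^{-\lambda\delta}\mathbb{E} V(X^{x,u^\varepsilon}_\delta)$, rearrange exactly as above, apply the same three bounds with $u$ replaced by $u^\varepsilon$, and let $\varepsilon\to 0$. The Lipschitz and growth estimates themselves are routine; the genuinely delicate step is the exponential bookkeeping. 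Since $\beta<0$, the factor $e^{-2\beta\delta}$ in (\ref{12071}) grows in $\delta$ and can only be controlled by absorbing it against the DPP discount $e^{-\lambda\delta}$, which forces $\beta\in(-\lambda,-\Theta)$; the non-emptiness of this interval is exactly the role played by the standing hypothesis $\lambda>\Theta$. The H\"older-type $\delta^{1/2}$ term in the conclusion is then dictated by the factor $\sqrt{(1+\delta)\delta}$ coming from (\ref{12071}) rather than being an artefact of the estimation.
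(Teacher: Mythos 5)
Your proof is correct and follows essentially the same route as the paper's: apply the DPP at time $\delta$, decompose $|V(x)-V(x_\delta)|$ into the running-cost integral and the terminal mismatch, split the latter into the discounted Lipschitz difference controlled via (\ref{12071}), Theorem \ref{theoremv} and Jensen, and the $(1-e^{-\lambda\delta})V(x_\delta)$ residual controlled via (\ref{valuep1}) and $|x_\delta|_C\leq|x|_C$, with (\ref{linear}) bounding the moments in the integral term. The only difference is stylistic and in your favor for clarity: you make explicit the choice $\beta\in(-\lambda,-\Theta)$ that absorbs the growing factor $e^{-2\beta\delta}$ into the discount $e^{-\lambda\delta}$ (and the elementary bound $\sqrt{(1+\delta)\delta}\leq\sqrt{\delta}+\delta$), whereas the paper leaves this bookkeeping implicit, and you handle the two inequality directions via arbitrary versus $\varepsilon$-optimal controls rather than using the single $\varepsilon$-optimal control for both.
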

{\bf  Proof}. \ \
                 Let $x\in{\cal{C}}_0$ and $\delta\geq0$ be
                 arbitrarily given.
                 From Theorem \ref{theoremvalue} it follows that for any
                 $\varepsilon>0$ there exists an admissible control
                 $u^\varepsilon(\cdot)\in{\mathcal {U}}_0$ such that
                 $$
                 V(x)\geq\int_{0}^{\delta}e^{-\lambda l}\mathbb{E}q(X^{x,u^\varepsilon}_l,u^\varepsilon(l))dl
                   +e^{-\lambda \delta}\mathbb{E}V(X^{x,u^\varepsilon}_\delta)-\varepsilon.
                 $$
Therefore,
\begin{eqnarray}\label{3.20}
                          |V(x)-V(x_\delta)|\leq
                          |I^1_{\delta}|+|I^2_{\delta}|+\varepsilon,
\end{eqnarray}
                        where
$$
                             I^1_{\delta}=\int_{0}^{\delta}e^{-\lambda l}\mathbb{E}q(X^{x,u^\varepsilon}_l,u^\varepsilon(l))dl,
$$
$$
                                  I^2_{\delta}=e^{-\lambda \delta}\mathbb{E}V(X^{x,u^\varepsilon}_\delta)-
                                                V(x_\delta).
$$
                                 By (\ref{linear}), we
                                 have that
\begin{eqnarray*}
                                |I^1_{\delta}|&\leq&L\delta(1+\sup_{0\leq l\leq \delta}e^{-\lambda l}\mathbb{E}|X^{x,u^\varepsilon}_l|_C)\leq L\delta(1+C^{\frac{1}{2}}(1+|x|_C)).
\end{eqnarray*}
         Now we consider
         second term $I^2_{\delta}$. By (\ref{12071}) and Theorem \ref{theoremv},
\begin{eqnarray*}
                                   |I^2_{\delta}|&\leq& e^{-\lambda \delta}|\mathbb{E}V(X^{x,u^\varepsilon}_\delta)-
                                                V(x_\delta)|+|(e^{-\lambda \delta}-1)V(x_\delta)|\\
                                   &\leq& C_1e^{-\lambda \delta}\mathbb{E}|X^{x,u^\varepsilon}_\delta-x_\delta|_C+C_1(1-e^{-\lambda \delta})(1+|x|_C)\\
                                    &\leq& C_1C_0^{\frac{1}{2}}(1+|x|_C)(1+\delta)^{\frac{1}{2}}\delta^{\frac{1}{2}}+C_1(1-e^{-\lambda \delta})(1+|x|_C).
\end{eqnarray*}
                                   Hence, from (\ref{3.20}), we then have,  for some suitable constant
                                 $C'$ independent of the control
                                 $u^\varepsilon$,
$$
                                   |V(x)-V(x_\delta)|\leq
                                   C'(1+|x|_C)(\delta+\delta^{\frac{1}{2}}+1-e^{-\lambda \delta})
                                   +\varepsilon,
$$
                                   and letting $\varepsilon\downarrow
                                   0$ we get (\ref{hold}) holds true.
                                  The proof is complete.\ \
                                  $\square$

\section{ Viscosity solutions to  HJB equations: Existence theorem.}

\par
                                   In this section, we consider second order elliptic HJB  equation with infinite delay (\ref{hjb1}). As usual, we start with classical solutions.

\par
\begin{definition}\label{definitionccc}     (Classical solution)
              A functional $v\in C_p^{1,2}({\cal{C}}_0)$       is called a classical solution to   equation (\ref{hjb1}) if it satisfies
                equation (\ref{hjb1}) pointwise.
 \end{definition}
                      \par
        We will prove that the value functional $V$ defined by (\ref{eq3}) is a viscosity solution of  equation (\ref{hjb1}).
                     We  give the following definition for the viscosity solutions.
 For every $(t,x)\in \Lambda$ and $w\in C({\cal{C}}_0)$, define
$$
             \mathcal{A}^+(t,x,w):=\bigg{\{}\varphi\in C_p^{1,2}(\Lambda^t):  0={w}(x)-\varphi({t,x})=\sup_{(s,y)\in \Lambda^t}
                         ({w}(y)- \varphi(s,y
                         ))\bigg{\}},
$$
and
$$
             \mathcal{A}^-(t,x,w):=\left\{\varphi\in C_p^{1,2}(\Lambda^t):  0={w}(x)+\varphi({t,x})=\inf_{(s,y)\in \Lambda^t}
                         ({w}(y)+\varphi(
                         s,y))\right\}.
$$
\begin{definition}\label{definition4.1} \ \
 $w\in C({\cal{C}}_0)$ is called a
                             viscosity subsolution (resp.,  supersolution)
                             to  equation (\ref{hjb1}) if whenever  $\varphi\in {\cal{A}}^+(s,x,w)$ (resp.,  $\varphi\in {\cal{A}}^-(s,x,w)$)  with $(s,x)\in \Lambda$,  we have
  %
%
%
\begin{eqnarray*}
                          -\lambda w(x)+\partial_t{\varphi}(s,x)
                           +{\mathbf{H}}(x,\partial_x\varphi(s,x),\partial_{xx}\varphi(s,x))\geq0,
\end{eqnarray*}
\begin{eqnarray*}
                          (\mbox{resp.},\ -\lambda w(x)-\partial_t{\varphi}(s,x)
                           +{\mathbf{H}}(x,-\partial_x\varphi(s,x),-\partial_{xx}\varphi(s,x))
                          \leq0).
\end{eqnarray*}
                                $w\in C({\cal{C}}_0)$ is said to be a
                             viscosity solution to equation (\ref{hjb1}) if it is
                             both a viscosity subsolution and a viscosity
                             supersolution.
\end{definition}
\begin{remark}\label{remarkv}
  Assume that   coefficients $b(x,u)=\overline{b}(x(0),u),
                     \ \sigma(x,u)=\overline{\sigma}(x(0),u)$,  $ q(x,u)=\overline{q}(x(0),u)$ for all
                     $(x,u) \in {\cal{C}}_0\times U$. Then there exists a function $\overline{V}: \mathbb{R}^d\rightarrow \mathbb{R}$ such that  $V(x)=\overline{V}(x(0))$ for all
                     $x \in {\cal{C}}_0$, and    equation (\ref{hjb1}) reduces to the following HJB equation:
 \begin{eqnarray}\label{hjb3}
-\lambda\overline{V}(x)+\overline{{\mathbf{H}}}(x,\nabla_x\overline{V}(x),\nabla_{xx}\overline{V}(x))= 0,\ \ \  x\in
                                \mathbb{R}^d,
\end{eqnarray}
where
 \begin{eqnarray*}
                                \overline{{\mathbf{H}}}(x,p,l)=\sup_{u\in{
                                         {U}}}[(p,\overline{b}(x,u))_{\mathbb{R}^d} +\frac{1}{2}\mbox{tr}[ l \sigma(x,u)\sigma^\top(x,u)]+\bar{q}(x,u)],  \ (x,p,l)\in \mathbb{R}^d\times  \mathbb{R}^d\times {\cal{S}}(\mathbb{R}^d).
\end{eqnarray*}
Here  and in the sequel, 
$\nabla_x$ and $\nabla_{xx}$ denote the standard  first and second order derivatives
with respect to $x$. 
 %
%
%
%
\end{remark}
\par
 The following theorem show that our definition of viscosity solutions to  equation (\ref{hjb1}) is a natural  extension of classical viscosity solution to equation  (\ref{hjb3}).
 \begin{theorem}\label{theoremnatural} \ \ Consider the setting in Remark \ref{remarkv}.
                          Assume that $V$ is a viscosity solution of   equation (\ref{hjb1}) in the sense of Definition \ref{definition4.1}. Then $\overline{V}$ is a
                          viscosity solution of equation (\ref{hjb3}) in the standard sense (see Definition 2.2 in  \cite{cran2}).
\end{theorem}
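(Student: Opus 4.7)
The plan is to lift a finite-dimensional $C^2$ test function for $\overline{V}$ at a point $x_0 \in \mathbb{R}^d$ to an admissible path-space test function for $V$ and then to read off the standard viscosity inequality from Definition \ref{definition4.1}. I only detail the subsolution direction; the supersolution case is entirely analogous via the sign flip built into $\mathcal{A}^-$.

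\emph{Step 1 (globalizing the test function).} Given $\varphi \in C^2(\mathbb{R}^d)$ such that $\overline{V} - \varphi$ attains a local maximum $0$ at $x_0$, I modify $\varphi$ outside a small neighborhood of $x_0$ to obtain $\psi \in C^2(\mathbb{R}^d)$ of polynomial growth with (a) $\overline{V} \leq \psi$ on all of $\mathbb{R}^d$ with equality at $x_0$, and (b) $\psi$ matching $\varphi$ to second order at $x_0$. Concretely, one interpolates between $\varphi$ on $B(x_0, r)$ and a large quadratic $C(1 + |a|^2)$ outside $B(x_0, 2r)$ via a smooth cutoff, taking $r$ small enough that $\overline V \leq \varphi$ on $B(x_0, 2r)$ and $C$ large enough to dominate the linear growth of $\overline V$. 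The linear-growth bound $|\overline{V}(a)| \leq C_1(1 + |a|)$ inherited from Theorem \ref{theoremv} via $V(\cdot) = \overline V(\cdot(0))$ is exactly what makes this globalization possible.

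\emph{Step 2 (lifting to path space).} Define $\tilde \psi : \Lambda \to \mathbb{R}$ by $\tilde \psi(s, y) := \psi(y(0))$. Since the horizontal shift $y \mapsto y_h$ leaves $y(0)$ unchanged, while the vertical bump $y \mapsto y + h e_i \mathbf{1}_{\{0\}}$ translates $y(0)$ by $h e_i$, the pathwise derivatives are
$$\partial_t \tilde \psi(s, y) = 0, \quad \partial_x \tilde \psi(s, y) = \nabla \psi(y(0)), \quad \partial_{xx} \tilde \psi(s, y) = \nabla^2 \psi(y(0)).$$
Continuity in $d_\infty$ follows from $|y(0) - y'(0)| \leq d_\infty((s,y),(s',y'))$, and polynomial growth of these derivatives follows from that of $\psi$, so $\tilde \psi \in C_p^{1,2}(\Lambda)$.

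\emph{Step 3 (conclusion).} Pick any $x \in \mathcal{C}_0$ with $x(0) = x_0$, for instance $x(\theta) = e^{\theta} x_0$. Because $V(y) - \tilde \psi(s, y) = \overline{V}(y(0)) - \psi(y(0)) \leq 0$ for all $(s, y) \in \Lambda$ with equality at $(0, x)$, we have $\tilde \psi \in \mathcal{A}^+(0, x, V)$. Applying the subsolution property of $V$ at $(0,x)$ and using that, under the Markovian structure of $(b, \sigma, q)$, $\mathbf{H}(x, \cdot, \cdot)$ depends on $x$ only through $x(0) = x_0$ and reduces to $\overline{\mathbf{H}}(x_0, \cdot, \cdot)$, we obtain
$$-\lambda \overline{V}(x_0) + \overline{\mathbf{H}}\bigl(x_0, \nabla \varphi(x_0), \nabla^2 \varphi(x_0)\bigr) \geq 0,$$
which is the standard subsolution inequality for (\ref{hjb3}). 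The main (though routine) obstacle is the globalization in Step 1: Definition \ref{definition4.1} demands that the test dominate $V$ over the entire path space, whereas the classical notion only needs a local bound near $x_0$, and the linear growth of $\overline V$ is what bridges the two.
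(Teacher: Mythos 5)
Your proof is correct and follows essentially the same approach as the paper: lift the finite-dimensional test function to a path-space test function $\tilde\psi(s,y)=\psi(y(0))$, check $\tilde\psi\in C_p^{1,2}(\Lambda)$ by computing the Dupire derivatives (which are exactly the classical ones at $y(0)$), verify $\tilde\psi\in\mathcal{A}^+(0,x,V)$ for $x(\theta)=e^\theta x_0$, and read off the classical inequality from the path-space one using that $\mathbf{H}$ reduces to $\overline{\mathbf{H}}$. The only difference is that the paper starts directly with a test function achieving a \emph{global} maximum over $\mathbb{R}^d$ and simply remarks that it can be modified to have polynomial growth, whereas you begin from a local maximum and carry out the globalization explicitly (using the linear growth of $\overline V$ to interpolate with a dominating quadratic); that extra step is routine in the standard Crandall--Lions theory, so the two proofs are the same in substance.
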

 {\bf  Proof}. \ \ Without loss of generality, we shall only prove the viscosity subsolution property.
Let $\overline{\varphi}\in C^{2}(\mathbb{R}^d)$ and $x\in  \mathbb{R}^d$
 such that $$
                         0=(\overline{V}- {\overline{\varphi}})(x)=\sup_{y\in \mathbb{R}^d}
                         (\overline{V}- {\overline{\varphi}})(y).
$$
We can modify $\overline{\varphi}$  such that $\overline{\varphi}$, $\nabla_{x}\overline{\varphi}$ and $\nabla_{xx}\overline{\varphi}$  grow  in a polynomial way.
 Define $\varphi:\hat{\Lambda}\rightarrow \mathbb{R}$ by
 $$
 \varphi(s,\gamma)=\overline{\varphi}(\gamma(0)),\  (s, \gamma)\in \hat{\Lambda},
 $$
 and define $\hat{\gamma}\in {\cal{C}}_0$ by
 $$
   \hat{\gamma}(\theta)= e^{\theta}x,\ \ \theta\in (-\infty,0].$$
 It is clear that,
 $$\partial_t\varphi(s,\gamma)=0,\ \ \partial_x\varphi(s,\gamma)=\nabla_x\overline{\varphi}(s,\gamma(0)),\ \
 \partial_{xx}\varphi(s,\gamma)=\nabla_{xx}\overline{\varphi}(s,\gamma(0)),\ \ (s,\gamma)\in \hat{\Lambda}.
 $$
 Thus we have  $\varphi\in C^{1,2}_p(\Lambda)$.
  Moreover, by the definitions of $V$ and $\varphi$, for every fixed $t\geq0$,
 $$
 0=V(\hat{\gamma})-{{\varphi}}(t,\hat{\gamma})=(\overline{V}-\overline{\varphi})(x)=\sup_{y\in \mathbb{R}^d}
                         (\overline{V}- {\overline{\varphi}})(
                         y)=\sup_{(s,\gamma)\in \Lambda^t}
                         (V(\gamma)-{{\varphi}}(
                         s,\gamma)).
 $$
 Therefore,    $ \varphi \in {\cal{A}}^+(t,\hat{\gamma},V)$ with $(t,\hat{\gamma})\in \Lambda$.  
 Since $V$ is a viscosity subsolution of  equation (\ref{hjb1}), we have
 $$
-\lambda V(\hat{\gamma})+\partial_t{\varphi}(t, \hat{\gamma})
                           +{\mathbf{H}}(\hat{\gamma},\partial_x{\varphi}(t,\hat{\gamma}),\partial_{xx}{\varphi}(t,\hat{\gamma}))]\geq0.
$$
Thus,
$$-\lambda\overline{V}(x)+\overline{{\mathbf{H}}}(x,\nabla_x\overline{\varphi}(x),\nabla_{xx}\overline{\varphi}(x))\geq 0.$$
 By the arbitrariness of   $\overline{\varphi}\in C^{2}(\mathbb{R}^d)$, we see that $\overline{V}$ is a viscosity subsolution of  equation (\ref{hjb3}), and thus completes the proof. \ \ $\Box$
 \par
We are now in a  position  to give  the existence and consistency results 
for the viscosity solutions.
\begin{theorem}\label{theoremvexist} \ \
                          Suppose that Hypothesis \ref{hypstate}  holds. Then, for every $\lambda>\Theta$, the value
                          functional $V$ defined by (\ref{eq3}) is a
                          viscosity solution to equation  (\ref{hjb1}).
\end{theorem}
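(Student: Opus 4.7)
The strategy is the classical one for viscosity solutions of HJB equations: combine the dynamic programming principle of Theorem \ref{theoremvalue}, the functional It\^o formula of Lemma \ref{theoremito}, and the $u$-uniform short-time estimate (\ref{12071}) to pass to the limit as the time step vanishes. The subsolution and supersolution properties are verified in turn.

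\textbf{Subsolution.} Fix $(t,x)\in\Lambda$, $\varphi\in\mathcal{A}^+(t,x,V)$ and an arbitrary $u_0\in U$. Specialising the (time-homogeneous form of the) DPP to the constant control $u\equiv u_0$ and using $V(x)=\varphi(t,x)$ together with $V(y)\leq\varphi(l,y)$ on $\Lambda^t$, we obtain, for every $h>0$,
\[
\varphi(t,x)\leq \int_t^{t+h}e^{-\lambda(l-t)}\mathbb{E}q(X^{t,x,u_0}_l,u_0)\,dl+e^{-\lambda h}\mathbb{E}\varphi(t+h,X^{t,x,u_0}_{t+h}).
\]
Apply Lemma \ref{theoremito} to $(l,\gamma)\mapsto e^{-\lambda(l-t)}\hat\varphi(l,\gamma)$, where $\hat\varphi\in C_p^{1,2}(\hat\Lambda^t)$ is an extension of $\varphi$ furnished by Definition \ref{definitionc2}(ii); take expectation (killing the martingale term), divide by $h$ and send $h\downarrow 0$. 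Estimate (\ref{12071}) together with the uniform continuity of $x\in\mathcal{C}_0$ yields $\mathbb{E}|X^{t,x,u_0}_l-x|_C^2\to 0$, and the continuity of $b,\sigma,q$ and of $\partial_t\varphi,\partial_x\varphi,\partial_{xx}\varphi$ then gives
\[
0\leq q(x,u_0)-\lambda V(x)+\partial_t\varphi(t,x)+(\partial_x\varphi(t,x),b(x,u_0))_{\mathbb{R}^d}+\tfrac12\mbox{tr}\bigl(\partial_{xx}\varphi(t,x)\sigma\sigma^\top(x,u_0)\bigr).
\]
Taking the infimum over $u_0\in U$ produces the required subsolution inequality.

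\textbf{Supersolution.} For $\varphi\in\mathcal{A}^-(t,x,V)$, use the DPP and the definition of infimum to select, for each $h>0$, an admissible control $u^h\in\mathcal{U}_0$ such that
\[
V(x)+h^2\geq \int_t^{t+h}e^{-\lambda(l-t)}\mathbb{E}q(X^{t,x,u^h}_l,u^h(l))\,dl+e^{-\lambda h}\mathbb{E}V(X^{t,x,u^h}_{t+h}).
\]
Exploiting $V(x)=-\varphi(t,x)$ and $V(y)\geq-\varphi(l,y)$, applying the functional It\^o formula to $e^{-\lambda(l-t)}\hat\varphi(l,\gamma)$, and invoking the defining pointwise lower bound
\[
q(z,u)+(-\partial_x\varphi,b(z,u))_{\mathbb{R}^d}+\tfrac12\mbox{tr}\bigl(-\partial_{xx}\varphi\,\sigma\sigma^\top(z,u)\bigr)\geq \mathbf{H}(z,-\partial_x\varphi,-\partial_{xx}\varphi),
\]
we arrive at
\[
h^2\geq \mathbb{E}\int_t^{t+h}e^{-\lambda(l-t)}\bigl[\mathbf{H}(X^{t,x,u^h}_l,-\partial_x\varphi,-\partial_{xx}\varphi)+\lambda\varphi-\partial_t\varphi\bigr]dl.
\]
Dividing by $h$ and letting $h\downarrow 0$ delivers the supersolution inequality
\[
-\lambda V(x)-\partial_t\varphi(t,x)+\mathbf{H}(x,-\partial_x\varphi(t,x),-\partial_{xx}\varphi(t,x))\leq 0.
\]

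The principal obstacle lies in this last passage to the limit, because the near-optimal control $u^h$ depends on $h$ and no compactness for $\{u^h\}$ is available. The decisive point is that estimate (\ref{12071}) is \emph{uniform} in the admissible control, so $\mathbb{E}|X^{t,x,u^h}_l-x|_C^2\to 0$ as $l\downarrow t$ irrespective of the choice of $u^h$; combined with the continuity of $\mathbf{H}$ in $(x,p,l)$ (itself a consequence of the Lipschitz and linear-growth bounds in Hypothesis \ref{hypstate}) and the polynomial growth of the Dupire derivatives of $\varphi$ (guaranteed by $\varphi\in C_p^{1,2}$), dominated convergence identifies the limit of the integrand with its value at $l=t$ and closes the proof.
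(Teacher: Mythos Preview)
Your proposal is correct and follows essentially the same route as the paper: DPP (Theorem \ref{theoremvalue}) combined with the functional It\^o formula (Lemma \ref{theoremito}) and the control-uniform short-time estimate (\ref{12071}) to pass to the limit, with a constant control for the subsolution and an $h$-dependent near-optimal control for the supersolution. The only cosmetic difference is the order of two operations in the supersolution step: the paper first replaces $X^{x,u^\varepsilon}_l$ by $x$ (using the Lipschitz property of $q,b,\sigma$ uniformly in $u$, with error $o(1)$) and then bounds below by $\mathbf{H}$, whereas you bound by $\mathbf{H}$ first at the random state and then pass to the limit; both orderings rely on exactly the same uniformity furnished by (\ref{12071}).
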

 \begin{theorem}\label{theorem3.2}
  Suppose Hypothesis \ref{hypstate}   holds, $\lambda>\Theta$ and $v\in C_p^{1,2}({\cal{C}}_0)$. Then
                 $v$ is a classical solution of  equation (\ref{hjb1}) if and only if it is a viscosity solution.
\end{theorem}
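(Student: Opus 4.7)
The strategy is to prove each implication separately. The direction \emph{viscosity $\Rightarrow$ classical} is straightforward because we may plug $v$ itself in as the test function. The direction \emph{classical $\Rightarrow$ viscosity} requires extracting pointwise comparisons of the derivatives at $(s,x)$ from a maximum/minimum taken over the subspace $\Lambda^s$ of continuous paths, and I will achieve this through the functional It\^o formula (Lemma~\ref{theoremito}).

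For \emph{viscosity $\Rightarrow$ classical}, fix $x \in {\cal{C}}_0$ and set $\hat{v}(t,y):=v(y)$ for $(t,y)\in \Lambda$. By Definition~\ref{1120}(iii), $\hat{v}\in C_p^{1,2}(\Lambda)$ with $\partial_t\hat{v}\equiv 0$ and with $\partial_x\hat{v}(t,y)=\partial_xv(y)$, $\partial_{xx}\hat{v}(t,y)=\partial_{xx}v(y)$. Since $v(y)-\hat{v}(0,y)\equiv 0$ on ${\cal{C}}_0$, the supremum defining ${\cal{A}}^+(0,x,v)$ is attained trivially, so $\hat{v}\in {\cal{A}}^+(0,x,v)$ and the subsolution inequality reduces to $-\lambda v(x)+{\mathbf{H}}(x,\partial_xv(x),\partial_{xx}v(x))\geq 0$. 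Exactly the same argument with $-\hat{v}\in {\cal{A}}^-(0,x,v)$ gives the reverse inequality. Combined with $\partial_tv(x)=0$, this shows that $v$ satisfies equation~(\ref{hjb1}) pointwise.

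For \emph{classical $\Rightarrow$ viscosity}, take $\varphi \in {\cal{A}}^+(s,x,v)$ with $(s,x)\in \Lambda$ and set $\psi:=\varphi-\hat{v}$, so $\psi\in C_p^{1,2}(\Lambda^s)$, $\psi\geq 0$ on $\Lambda^s$, and $\psi(s,x)=0$. For arbitrary constants $b\in \mathbb{R}^d$ and $\sigma\in \mathbb{R}^{d\times n}$, consider the continuous semimartingale $X$ defined by $X(r)=x(r-s)$ for $r\leq s$ and $X(r)=x(0)+b(r-s)+\sigma(W(r)-W(s))$ for $r>s$. Then $X_r\in {\cal{C}}_0$ almost surely for every $r\geq s$, so $\psi(r,X_r)\geq 0$. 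Applying Lemma~\ref{theoremito} to $\psi$ along $X$ and taking expectations,
\begin{eqnarray*}
0\leq \mathbb{E}\psi(r,X_r)=\mathbb{E}\int_s^r\bigl[\partial_t\psi(\tau,X_\tau)+(\partial_x\psi(\tau,X_\tau),b)_{\mathbb{R}^d}+\tfrac{1}{2}\mbox{tr}(\partial_{xx}\psi(\tau,X_\tau)\sigma\sigma^\top)\bigr]d\tau.
\end{eqnarray*}
Dividing by $r-s$ and sending $r\to s^+$ yields, for all $b\in\mathbb{R}^d$ and $\sigma\in\mathbb{R}^{d\times n}$,
$$\partial_t\psi(s,x)+(\partial_x\psi(s,x),b)_{\mathbb{R}^d}+\tfrac{1}{2}\mbox{tr}(\partial_{xx}\psi(s,x)\sigma\sigma^\top)\geq 0.$$
Setting $\sigma=0$ and varying $b$ linearly forces $\partial_x\psi(s,x)=0$ and $\partial_t\psi(s,x)\geq 0$; with $b=0$, rescaling $\sigma$ forces $\partial_{xx}\psi(s,x)\geq 0$ in the PSD order. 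Equivalently, $\partial_x\varphi(s,x)=\partial_xv(x)$, $\partial_{xx}\varphi(s,x)\geq \partial_{xx}v(x)$, and $\partial_t\varphi(s,x)\geq 0$. Since the map $l\mapsto {\mathbf{H}}(x,p,l)$ is monotone in the PSD order (because $\sigma(x,u)\sigma^\top(x,u)$ is PSD inside the infimum), this gives ${\mathbf{H}}(x,\partial_x\varphi(s,x),\partial_{xx}\varphi(s,x))\geq {\mathbf{H}}(x,\partial_xv(x),\partial_{xx}v(x))$. Adding $\partial_t\varphi(s,x)\geq 0$ to the classical identity $-\lambda v(x)+{\mathbf{H}}(x,\partial_xv(x),\partial_{xx}v(x))=0$ yields the subsolution inequality. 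The supersolution side follows by applying the same argument to $\psi':=\varphi+\hat{v}$ for $\varphi\in{\cal{A}}^-(s,x,v)$.

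The main obstacle is that the extremum of $\varphi-\hat{v}$ is guaranteed only on $\Lambda^s$ (continuous paths), whereas the Dupire vertical derivatives are defined through the discontinuous perturbations $x+he_i\mathbf{1}_{\{0\}}\in{\cal{D}}_0\setminus{\cal{C}}_0$, which cannot be approximated in $|\cdot|_C$ by continuous paths. A direct calculus-of-extrema argument therefore fails. The functional It\^o formula resolves this by testing $\psi\geq 0$ along continuous semimartingales $X$ satisfying $X_r\in{\cal{C}}_0$ almost surely, yet producing all three required pieces of derivative information (horizontal, first vertical, second vertical) from the drift and diffusion terms via the arbitrariness of $b$ and $\sigma$.
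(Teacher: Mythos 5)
Your overall strategy matches the paper's almost exactly: viscosity $\Rightarrow$ classical by testing with $\hat v$ itself, and classical $\Rightarrow$ viscosity by applying the functional It\^o formula to $\varphi-\hat v$ along a continuous semimartingale with constant drift $b$ and diffusion $\sigma$, then extracting $\partial_x\varphi(s,x)=\partial_x v(x)$, $\partial_t\varphi(s,x)\geq \partial_t v(x)$ and a second-order comparison. Your only variation is at the end: you derive $\partial_{xx}\varphi(s,x)\geq\partial_{xx}v(x)$ in the PSD order and invoke monotonicity of $l\mapsto{\mathbf{H}}(x,p,l)$, whereas the paper simply substitutes $\sigma=\sigma(x,u)$ into the scalar inequality and infimizes over $u$; the two are equivalent.

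There is, however, a conceptual error you should correct: the claim that $\partial_t\hat v\equiv 0$ (and hence $\partial_t v\equiv 0$) is false. The Dupire horizontal derivative of the time-independent lift $\hat v(t,y)=v(y)$ is
$$\partial_t\hat v(s,y)=\lim_{h\to 0^+}\frac{1}{h}\bigl[v(y_h)-v(y)\bigr],$$
and since $y_h\neq y$ (the path is ``frozen'' at the value $y(0)$ on $[-h,0]$), this limit is generally nonzero; for instance, with $v(y)=\int_{-\infty}^0 e^\theta y(\theta)\,d\theta$ one gets $\partial_t v(y)=y(0)-v(y)$. Indeed, equation~(\ref{hjb1}) carries a genuine $\partial_t V$ term precisely because of this. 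In your first direction, the sub- and supersolution inequalities actually read $-\lambda v(x)+\partial_t v(x)+{\mathbf{H}}(\cdots)\geq 0$ and $\leq 0$, and combining them gives equation~(\ref{hjb1}) directly without any claim that $\partial_t v=0$. In your second direction, $\partial_t\psi(s,x)\geq 0$ yields $\partial_t\varphi(s,x)\geq\partial_t v(x)$, not $\partial_t\varphi(s,x)\geq 0$; combined with the full classical identity $-\lambda v(x)+\partial_t v(x)+{\mathbf{H}}(x,\partial_x v(x),\partial_{xx}v(x))=0$ and the monotonicity of $\mathbf{H}$, this still gives the subsolution inequality. So your conclusion survives because the mistake is applied consistently and cancels, but the assertion $\partial_t\hat v\equiv 0$ should be removed and the $\partial_t v$ terms carried through, as the paper does.
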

The proof of Theorems  \ref{theoremvexist} and \ref{theorem3.2} is rather standard.   For the sake of the completeness of the article and the convenience of readers, we give their proof in the appendix A.
               \par
We conclude this section with   the stability of viscosity solutions.
\begin{theorem}\label{theoremstability}
                      Let $b,\sigma,q$ satisfy  Hypothesis \ref{hypstate}, and $v\in C( {\cal{C}}_0)$. Assume
                       \item{(i)}      for any $\varepsilon>0$, there exist $b^\varepsilon, \sigma^\varepsilon, q^\varepsilon$ and $v^\varepsilon\in C({\cal{C}}_0)$ such that  $b^\varepsilon, \sigma^\varepsilon, q^\varepsilon$ satisfy  Hypothesis \ref{hypstate} and $v^\varepsilon$ is a viscosity subsolution (resp., supersolution) of  equation (\ref{hjb1}) with generators $b^\varepsilon, \sigma^\varepsilon, q^\varepsilon$;
                           \item{(ii)} as $\varepsilon\rightarrow0$, $(b^\varepsilon, \sigma^\varepsilon, q^\varepsilon, v^\varepsilon)$ converge to
                           $(b, \sigma, q,  v)$  uniformly in the following sense: 
\begin{eqnarray}\label{sss}
                          \lim_{\varepsilon\rightarrow0}\sup_{(x,u)\in  {\cal{C}}_0 \times U}[(|b^\varepsilon-b|+|\sigma^\varepsilon-\sigma|_2+|q^\varepsilon-q|)(x,u)
                         +|v^\varepsilon-v|(x)]=0.
\end{eqnarray}
                   Then $v$ is a viscosity subsoluiton (resp., supersolution) of equation  (\ref{hjb1}) with generators $b,\sigma,q$.
\end{theorem}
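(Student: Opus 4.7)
\textbf{Proof proposal for Theorem \ref{theoremstability}.}
By symmetry we treat only the subsolution case. Fix $(t,x)\in\Lambda$ and $\varphi\in{\cal A}^+(t,x,v)$; the goal is to prove
$-\lambda v(x)+\partial_t\varphi(t,x)+{\mathbf H}(x,\partial_x\varphi(t,x),\partial_{xx}\varphi(t,x))\ge 0$ by constructing approximate maximisers for the $v^\varepsilon$, applying their subsolution inequality, and passing to the limit. The natural difficulty is that $\Lambda^t$ is not locally compact, so ordinary compactness arguments fail; this is precisely where Lemma \ref{theoremleft} together with the smooth gauge $\overline{\Upsilon}$ of Section~3 will be used.

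First, I would perturb $\varphi$ to obtain a strict maximiser by setting
\[
\tilde\varphi(s,y):=\varphi(s,y)+\overline{\Upsilon}(t,x,s,y),\qquad (s,y)\in\Lambda^t.
\]
By Lemma \ref{theoremS} and Remark \ref{remarks}(i), $\tilde\varphi\in C_p^{1,2}(\Lambda^t)$. From $\varphi\in{\cal A}^+(t,x,v)$ and $\overline{\Upsilon}\ge 0$ one has $(v-\tilde\varphi)(s,y)\le 0=(v-\tilde\varphi)(t,x)$; moreover, by Remark \ref{remarks}(iii), $\overline{\Upsilon}(t,x,s,y)=0$ forces $(s,y)=(t,x)$, so the maximum is strict. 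Setting $\eta(\varepsilon):=\sup_{{\cal C}_0}|v^\varepsilon-v|\to 0$, one has $v^\varepsilon-\tilde\varphi\le\eta(\varepsilon)$ on $\Lambda^t$ while $(v^\varepsilon-\tilde\varphi)(t,x)\ge-\eta(\varepsilon)$. Now I would apply Lemma \ref{theoremleft} with $f=v^\varepsilon-\tilde\varphi$, gauge $\rho=\overline{\Upsilon}$, initial point $(t,x)$, slack $2\eta(\varepsilon)$, and weights $\delta_i=2^{-i}$, obtaining $(\hat t^\varepsilon,\hat x^\varepsilon)\in\Lambda^t$ and $\{(t_i^\varepsilon,x^{i,\varepsilon})\}_{i\ge 1}$ with
\[
\overline{\Upsilon}\bigl((\hat t^\varepsilon,\hat x^\varepsilon),(t_i^\varepsilon,x^{i,\varepsilon})\bigr)\le\frac{2\eta(\varepsilon)}{2^{i}\delta_0},\qquad t_0^\varepsilon=t,\ \ t_i^\varepsilon\uparrow\hat t^\varepsilon,
\]
and such that the auxiliary functional
\[
\psi^\varepsilon(s,y):=\tilde\varphi(s,y)+\sum_{i=0}^\infty\delta_i\,\overline{\Upsilon}\bigl((s,y),(t_i^\varepsilon,x^{i,\varepsilon})\bigr)
\]
belongs to $C_p^{1,2}(\Lambda^{\hat t^\varepsilon})$ (absolute convergence of the series and of the term-by-term differentiated series will be checked using the polynomial bounds in Lemma \ref{theoremS}) and $\psi^\varepsilon\in{\cal A}^+(\hat t^\varepsilon,\hat x^\varepsilon,v^\varepsilon)$.

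Next, the viscosity subsolution property of $v^\varepsilon$ relative to $(b^\varepsilon,\sigma^\varepsilon,q^\varepsilon)$ yields
\[
-\lambda v^\varepsilon(\hat x^\varepsilon)+\partial_t\psi^\varepsilon(\hat t^\varepsilon,\hat x^\varepsilon)+{\mathbf H}^\varepsilon\bigl(\hat x^\varepsilon,\partial_x\psi^\varepsilon(\hat t^\varepsilon,\hat x^\varepsilon),\partial_{xx}\psi^\varepsilon(\hat t^\varepsilon,\hat x^\varepsilon)\bigr)\ge 0.
\]
I would then pass to the limit using three facts: (a) $\overline{\Upsilon}((\hat t^\varepsilon,\hat x^\varepsilon),(t,x))\to 0$ implies $|\hat t^\varepsilon-t|\to 0$ and $\hat x^\varepsilon\to x$ in $|\cdot|_C$, hence $v^\varepsilon(\hat x^\varepsilon)\to v(x)$ by Lemma \ref{theoremv1202} and uniform convergence; (b) the explicit formulas (\ref{0528a})--(\ref{0528b}) and (\ref{0612a})--(\ref{0612c}) show that each of $\partial_t,\partial_x,\partial_{xx}$ applied to $\overline{\Upsilon}(\cdot,(t_i^\varepsilon,x^{i,\varepsilon}))$ evaluated at $(\hat t^\varepsilon,\hat x^\varepsilon)$ is controlled by a power of $\overline{\Upsilon}((\hat t^\varepsilon,\hat x^\varepsilon),(t_i^\varepsilon,x^{i,\varepsilon}))^{\alpha}$ with $\alpha>0$ (for $m=3$: a factor of order $|\cdot|_C^{5}$ for $\partial_x$ and $|\cdot|_C^{4}$ for $\partial_{xx}$), so the whole perturbation series contributes $O(\eta(\varepsilon)^{\alpha})\to 0$ to each derivative; together with the fact that $\partial_t,\partial_x,\partial_{xx}$ of $\overline{\Upsilon}(t,x,\cdot,\cdot)$ vanish at the diagonal $(t,x)$, one concludes that the three derivatives of $\psi^\varepsilon$ at $(\hat t^\varepsilon,\hat x^\varepsilon)$ converge to the corresponding derivatives of $\varphi$ at $(t,x)$; (c) the uniform convergence (\ref{sss}), together with the Lipschitz bound (\ref{1207}) on $b,\sigma,q$ in $x$ uniformly in $u$, gives ${\mathbf H}^\varepsilon(x^\varepsilon,p^\varepsilon,l^\varepsilon)\to{\mathbf H}(x,p,l)$ whenever $x^\varepsilon\to x$ in $|\cdot|_C$ and $(p^\varepsilon,l^\varepsilon)\to (p,l)$, because the inf over $u\in U$ in both Hamiltonians is taken with integrands differing by $O(\eta(\varepsilon)+|x^\varepsilon-x|_C+|p^\varepsilon-p|+|l^\varepsilon-l|)$ uniformly in $u$. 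Combining (a)--(c) in the displayed inequality yields the desired subsolution inequality at $(t,x)$.

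The technical heart of the argument, and the place where I expect to spend most of the careful bookkeeping, is step (b): verifying that $\psi^\varepsilon\in C_p^{1,2}(\Lambda^{\hat t^\varepsilon})$ and that the series of perturbation derivatives is summable with sum tending to zero. This relies entirely on the algebraic structure singled out in Lemma \ref{theoremS} — namely that $\partial_x S_m$ and $\partial_{xx} S_m$ carry a factor of order $|x-a_{t-\hat t}|_C^{2m-1}$ and $|x-a_{t-\hat t}|_C^{2m-2}$, respectively — combined with the geometric decay $\overline{\Upsilon}((\hat t^\varepsilon,\hat x^\varepsilon),(t_i^\varepsilon,x^{i,\varepsilon}))\le 2\eta(\varepsilon)/(2^i\delta_0)$ supplied by Borwein--Preiss. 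Once these estimates are in hand, passage to the limit in the Hamiltonian is routine given the uniform convergence hypothesis.
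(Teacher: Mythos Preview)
Your proposal is correct and follows essentially the same approach as the paper's proof: perturb $\varphi$ by the smooth gauge $\overline{\Upsilon}$ to force a strict maximum, invoke the Borwein--Preiss variational principle (Lemma~\ref{theoremleft}) to produce perturbed maximisers $(\hat t^\varepsilon,\hat x^\varepsilon)$ for $v^\varepsilon$, apply the subsolution inequality there, and pass to the limit using the explicit derivative bounds~(\ref{0528a})--(\ref{0528b}), (\ref{0612a})--(\ref{0612c}). Your choices differ only in bookkeeping---weights $\delta_i=2^{-i}$ rather than $\delta_i=1$, slack $2\eta(\varepsilon)$ with initial point exactly $(t,x)$, which in fact makes the convergence $(\hat t^\varepsilon,\hat x^\varepsilon)\to(t,x)$ immediate from property~(i) of Lemma~\ref{theoremleft} and avoids the paper's contradiction argument for~(\ref{gamma}).
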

{\bf  Proof }. \ \ Without loss of generality, we shall only prove the viscosity subsolution property.
   Let   $\varphi\in {\cal{A}}^+(\hat{t},\hat{x}, v)$ with
  $(\hat{t},\hat{x})\in \Lambda$. By (\ref{sss}), there exists a constant $\delta>0$ such that for all $\varepsilon\in (0,\delta)$,
  $$\sup_{(t,x)\in \Lambda^{\hat{t}}}(v^\varepsilon(x)-\varphi(t,x))\leq 1.$$
 Denote $\varphi_{1}(t,x):=\varphi(t,x)+\overline{\Upsilon}(t,x,\hat{t}, {\hat{x}})$
 for all
 $(t,x)\in \Lambda$. 
 For every $\varepsilon\in (0,\delta)$, by Lemma \ref{theoremv1202}, it is clear that $v^{\varepsilon}-{{\varphi_{1}}}$ is an  upper semicontinuous functional and bounded from above on $\Lambda^{\hat{t}}$. Define a sequence of positive numbers $\{\delta_i\}_{i\geq0}$  by 
        $\delta_i=1$ for all $i\geq0$. Since $\overline{\Upsilon}$ is a gauge-type function, from Lemma \ref{theoremleft} it follows that,
  for every $\varepsilon\in (0,\delta)$ and  $(t_0,x^0)\in \Lambda^{\hat{t}}$ satisfy
 $$
v^{\varepsilon}(x^0)-{{\varphi_{1}}}({t_0},x^0)\geq \sup_{(s,y)\in \Lambda^{\hat{t}}}(v^{\varepsilon}(y)-{{\varphi_{1}}}(s,y))-\varepsilon,\
\    \mbox{and} \ \ v^{\varepsilon}(x^0)-{{\varphi_{1}}}({t_0},x^0)\geq v^{\varepsilon}(\hat{x})-{{\varphi_{1}}}({\hat{t}},\hat{x}),
 $$
  there exist $({t_{\varepsilon}},{x}^{\varepsilon})\in \Lambda^{\hat{t}}$ and a sequence $\{({t_i},x^i)\}_{i\geq1}\subset \Lambda^{\hat{t}}$ such that
  \begin{description}
        \item{(i)} $\overline{\Upsilon}({t_0},x^0,{t_{\varepsilon}},{x}^{\varepsilon})\leq {\varepsilon}$,  $\overline{\Upsilon}({t_i},x^i,{t_{\varepsilon}},{x}^{\varepsilon})\leq \frac{\varepsilon}{2^i}$ and $t_i\uparrow t_{\varepsilon}$ as $i\rightarrow\infty$,
        \item{(ii)}  $v^{\varepsilon}({x}^{\varepsilon})-{{\varphi_{1}}}({t_{\varepsilon}},{x}^{\varepsilon})-\sum_{i=0}^{\infty}\overline{\Upsilon}({t_i},x^i,{t_{\varepsilon}},{x}^{\varepsilon})\geq v^{\varepsilon}(x^0)-{{\varphi_{1}}}({t_0},x^0)$, and
        \item{(iii)}  $v^{\varepsilon}(y)-{{\varphi_{1}}}(s,y)-\sum_{i=0}^{\infty}\overline{\Upsilon}({t_i},x^i,s,y)
            <v^{\varepsilon}({x}^{\varepsilon})-{{\varphi_{1}}}({t_{\varepsilon}},{x}^{\varepsilon})-\sum_{i=0}^{\infty}\overline{\Upsilon}({t_i},x^i,{t_{\varepsilon}},{x}^{\varepsilon})$ for all $(s,y)\in \Lambda^{t_{\varepsilon}}\setminus \{(t_{\varepsilon},{x}^{\varepsilon})\}$.

        \end{description}
  %
%
%
%
We claim that
\begin{eqnarray}\label{gamma}
|{{t}_{\varepsilon}}-{\hat{t}}|+|{x}^{\varepsilon}-\hat{x}|_C\leq |{{t}_{\varepsilon}}-{\hat{t}}|+|{x}^{\varepsilon}-\hat{x}_{t_{\varepsilon}-\hat{t}}|_C
+|\hat{x}-\hat{x}_{t_{\varepsilon}-\hat{t}}|_C\rightarrow0  \ \ \mbox{as} \ \ \varepsilon\rightarrow0.
\end{eqnarray}
 Indeed, if not,  by (\ref{s0}), we can assume that there exists an $\nu_0>0$
 such
                    that
$$
  \overline{\Upsilon}({{t}_{\varepsilon}},{x}^{\varepsilon},{\hat{t}},\hat{x})
  \geq\nu_0.
$$
 Thus,  we obtain that
\begin{eqnarray*}
   &&0=v(\hat{x})- \varphi({\hat{t}},\hat{x})= \lim_{\varepsilon\rightarrow0}v^\varepsilon(\hat{x})-{{\varphi_{1}}}({\hat{t}},\hat{x})\\
   &\leq & \limsup_{\varepsilon\rightarrow0}\bigg{[}v^{\varepsilon}({x}^{\varepsilon})-{{\varphi_{1}}}({t_{\varepsilon}},{x}^{\varepsilon})-\sum_{i=0}^{\infty}\overline{\Upsilon}({t_i},x^i,{t_{\varepsilon}},{x}^{\varepsilon})\bigg{]}\\
   &\leq&\limsup_{\varepsilon\rightarrow0}\bigg{[}v({x}^{\varepsilon})-\varphi({{t}_{\varepsilon}},{x}^{\varepsilon})+(v^\varepsilon-v)({x}^{\varepsilon})
     -\sum_{i=0}^{\infty}\overline{\Upsilon}({t_i},x^i,{t_{\varepsilon}},{x}^{\varepsilon})\bigg{]}-\nu_0\\
     &\leq& v(\hat{x})-\varphi({\hat{t}},\hat{x})-\nu_0=-\nu_0,
\end{eqnarray*}
 contradicting $\nu_0>0$.  We notice that,  We notice that, by (\ref{0528a}), (\ref{0528b}), (\ref{0612b}), (\ref{0612c}), the definition of $\Upsilon$ and  the property (i) of $({t_{\varepsilon}},{x}^{\varepsilon})$, there exists a generic constant $C>0$ such that
  \begin{eqnarray*}
  2\sum_{i=0}^{\infty}({t_{\varepsilon}}-{t}_{i})
  \leq2\sum_{i=0}^{\infty}\bigg{(}\frac{\varepsilon}{2^i}\bigg{)}^{\frac{1}{2}}\leq C\varepsilon^{\frac{1}{2}};
    \end{eqnarray*}
    \begin{eqnarray*}
    |\partial_x{\Upsilon}({x}^{\varepsilon}-\hat{x}_{t_{\varepsilon}-\hat{t}})|\leq C|\hat{x}(0)-{x}^{\varepsilon}(0)|^5, \ \
    |\partial_{xx}{\Upsilon}({x}^{\varepsilon}-\hat{x}_{t_{\varepsilon}-\hat{t}})|\leq C|\hat{x}(0)-{x}^{\varepsilon}(0)|^4;
    \end{eqnarray*}
     \begin{eqnarray*}
  \bigg{|}\partial_x\sum_{i=0}^{\infty}
                      \Upsilon({x}^{\varepsilon}-x^i_{t_{\varepsilon}-t_i})
                      \bigg{|}
                      \leq18\sum_{i=0}^{\infty}|x^i(0)-{x}^{\varepsilon}(0)|^5
                      \leq18\sum_{i=0}^{\infty}\bigg{(}\frac{\varepsilon}{2^i}\bigg{)}^{\frac{5}{6}}\leq C{\varepsilon}^{\frac{5}{6}};
                        \end{eqnarray*}
                        and
                         \begin{eqnarray*}
  &&\bigg{|}\partial_{xx}\sum_{i=0}^{\infty}
                      \Upsilon({x}^{\varepsilon}-x^i_{t_{\varepsilon}-t_i})\bigg{|}
                      \leq306\sum_{i=0}^{\infty}|x^i(0)-{x}^{\varepsilon}(0)|^4
                      \leq306\sum_{i=0}^{\infty}\bigg{(}\frac{\varepsilon}{2^i}\bigg{)}^{\frac{2}{3}}\leq C\varepsilon^{\frac{2}{3}}.
  \end{eqnarray*}
 Then for any $\varrho>0$, by (\ref{sss}) and (\ref{gamma}), there exists $\varepsilon>0$ small enough such that
$$
             2|{t}_{\varepsilon}-\hat{t}|+2\sum_{i=0}^{\infty}({t_{\varepsilon}}-{t}_{i})\leq \frac{\varrho}{4}, $$
             and
             $$
                     \lambda|v^\varepsilon({x}^{\varepsilon})-v(\hat{x})|+|\partial_t{\varphi}({{t}_{\varepsilon}},{x}^{\varepsilon})-\partial_t{\varphi}({\hat{t}},\hat{x})|\leq \frac{\varrho}{4}, \ |I|\leq \frac{\varrho}{4}, \ |II|\leq \frac{\varrho}{4},
$$
where
\begin{eqnarray*}
I&=&{\mathbf{H}}^{\varepsilon}({x}^{\varepsilon},
                           \partial_x{\varphi_{2}}({{t}_{\varepsilon}},{x}^{\varepsilon}),\partial_{xx}{\varphi_{2}}({{t}_{\varepsilon}},{x}^{\varepsilon}))
                           -{\mathbf{H}}({x}^{\varepsilon},
                           \partial_x{\varphi_{2}}({{t}_{\varepsilon}},{x}^{\varepsilon}),\partial_{xx}{\varphi_{2}}({{t}_{\varepsilon}},{x}^{\varepsilon})),\\
II&=&{\mathbf{H}}({x}^{\varepsilon},
                           \partial_x{\varphi_{2}}({{t}_{\varepsilon}},{x}^{\varepsilon}),\partial_{xx}{\varphi_{2}}({{t}_{\varepsilon}},{x}^{\varepsilon}))
                       -{\mathbf{H}}(\hat{x},\partial_x{\varphi}({\hat{t}},\hat{x}),\partial_{xx}{\varphi}({\hat{t}},\hat{x})),\\
{\varphi_{2}}({{t}_{\varepsilon}},{x}^{\varepsilon})&=&{\varphi_{1}}({{t}_{\varepsilon}},{x}^{\varepsilon})+\sum_{i=0}^{\infty}\overline{\Upsilon}({t_i},x^i,{t_{\varepsilon}},{x}^{\varepsilon}),
\end{eqnarray*}
and
\begin{eqnarray*}
                                {\mathbf{H}}^{\varepsilon}(x,p,l)=\sup_{u\in{
                                         {U}}}[
                        (p,b^{\varepsilon}(x,u))_{\mathbb{R}^d}+\frac{1}{2}\mbox{tr}[ l \sigma^{\varepsilon}(x,u){\sigma^{\varepsilon}}^\top(x,u)]
                         +q^{\varepsilon}(x,u)],   \ (x,p,l)\in  {\cal{C}}_0\times  \mathbb{R}^d\times {\cal{S}}(\mathbb{R}^d).
\end{eqnarray*}
 Since $v^{\varepsilon}$ is a viscosity subsolution of equation (\ref{hjb1}) with generators $b^{\varepsilon}, \sigma^{\varepsilon},  q^{\varepsilon}$, we have
$$
                          -\lambda v^\varepsilon({x}^{\varepsilon})+\partial_t\varphi({{t}_{\varepsilon}},{x}^{\varepsilon})+2({t}_{\varepsilon}-\hat{t})+2\sum_{i=0}^{\infty}({t_{\varepsilon}}-{t}_{i})
                           +{\mathbf{H}}^{\varepsilon}({x}^{\varepsilon},
                           \partial_x{\varphi_{2}}({{t}_{\varepsilon}},{x}^{\varepsilon}),\partial_{xx}{\varphi_{2}}({{t}_{\varepsilon}},{x}^{\varepsilon}))\geq0.
$$
Thus
\begin{eqnarray*}
                       0&\leq& -\lambda v^\varepsilon({x}^{\varepsilon})+ \partial_t{\varphi}({{t}_{\varepsilon}},{x}^{\varepsilon})
                       +2({t}_{\varepsilon}-\hat{t})+2\sum_{i=0}^{\infty}({t_{\varepsilon}}-{t}_{i})\\
                       &&
                       +{\mathbf{H}}(\hat{x},\partial_x\varphi({\hat{t}},\hat{x}),
                       \partial_{xx}\varphi({\hat{t}},\hat{x}))+I+II\\
                       &\leq& -\lambda v(\hat{x})+\partial_t{\varphi}({\hat{t}},\hat{x})+{\mathbf{H}}(\hat{x},\partial_x\varphi({\hat{t}},\hat{x}),
                       \partial_{xx}\varphi({\hat{t}},\hat{x}))+\varrho.
\end{eqnarray*}
Letting $\varrho\downarrow 0$, we show that
$$
-\lambda v(\hat{x})+\partial_t{\varphi}({\hat{t}},\hat{x})+{\mathbf{H}}(\hat{x},\partial_x\varphi({\hat{t}},\hat{x}),
                       \partial_{xx}\varphi({\hat{t}},\hat{x}))\geq0.
$$
Since  $\varphi\in {\cal{A}}^+(\hat{t},\hat{x}, v)$ is arbitrary, we see that $v$ is a viscosity subsolution of  equation (\ref{hjb1}) with generators $b,\sigma,q$, and thus completes the proof.
\ \ $\Box$

\section{Viscosity solution to  HJB equation: Uniqueness theorem.}
\par
 \vbox{}
6.1. \emph{Maximum principle}. In this subsection we extend Crandall-Ishii maximum principle to infinite delay case. 
It is the cornerstone of the theory of viscosity solutions, and will be used to prove  comparison theorem in next subsection.
\begin{definition}\label{definition0513}  Let $(\hat{t},\hat{x}_0,T)\in (0,\infty)\times \mathbb{R}^d\times (\hat{t},\infty)$ and $f:[0,\infty)\times \mathbb{R}^d\rightarrow \mathbb{R}$ be an upper semicontinuous function bounded from above. We say $f\in \Phi(\hat{t},\hat{x}_0,T)$ if there is a constant $r>0$ such that
 for every $L>0$ and $\varphi\in C^{1,2}([0,T]\times \mathbb{R}^{d})$ be a function such that
            $f(s,y_0)-\varphi(s,y_0)$  has a  maximum over $[0,T]\times \mathbb{R}^d$ at a point $(t,x_0)\in (0,T)\times \mathbb{R}^d$, 
            there is a constant $C>0$ such that
\begin{eqnarray}\label{05281}
                       &&  {\varphi}_{t}(t,x_0) \geq C \ \mbox{whenever}\\
                   && |t-\hat{t}|+|x_0-\hat{x}_0|<r,\ \ \ \
                    |f(t,x_0)|+|\nabla_x\varphi(t,x_0)|
                    +|\nabla^2_x\varphi(t,x_0)|\leq L.\nonumber
\end{eqnarray}
\end{definition}
\begin{definition}\label{definition0607}
 Let $\hat{t}\in [0,\infty)$ be fixed and  $w:\Lambda\rightarrow \mathbb{R}$ be an upper semicontinuous function bounded from above.
Define,  for $(t,x_0)\in [0,\infty)\times \mathbb{R}^{d}$,
\begin{eqnarray*}
                             &&\tilde{w}^{\hat{t}}(t,x_0):=\sup_{\xi\in  {\cal{C}}_0,\xi(0)=x_0}
                             {[}w(t,\xi){]}, \ \   t\in [\hat{t},\infty);\ \
                             \tilde{w}^{\hat{t}}(t,x_0):=\tilde{w}^{\hat{t}}(\hat{t},x_0)-(\hat{t}-t)^{\frac{1}{2}}, \ \   t\in[0,\hat{t}).
\end{eqnarray*}
Let $\tilde{w}^{\hat{t},*}$ be the upper
semicontinuous envelope of $\tilde{w}^{\hat{t}}$ (see Definition D.10 in  
 \cite{fab1}), i.e.,
$$
\tilde{w}^{\hat{t},*}(t,x_0)=\limsup_{(s,y_0)\in [0,\infty)\times \mathbb{R}^d, (s,y_0)\rightarrow(t,x_0)}\tilde{w}^{\hat{t}}(s,y_0).
$$
\end{definition}
In what follows, by a $modulus \ of \ continuity$, we mean a continuous function $\rho_1:[0,\infty)\rightarrow[0,\infty)$, with $\rho_1(0)=0$ and subadditive: $\rho_1(t+s)\leq \rho_1(t)+\rho_1(s)$, for all $t,s>0$; by a $local\ modulus\ of $ $ continuity$, we mean  a continuous function $\rho_1:[0,\infty)\times[0,\infty) \rightarrow[0,\infty)$, with the properties that for each $r>0$, $t\rightarrow \rho_1(t,r)$ is a modulus of continuity and $\rho_1$ is increasing in second variable.
\begin{theorem}\label{theorem0513} (Crandall-Ishii maximum principle)\ \ Let $\kappa>0$. Let $w_1,w_2:\Lambda\rightarrow \mathbb{R}$ be upper semicontinuous functions bounded from above and such that
\begin{eqnarray}\label{05131}
                     \limsup_{t+|x|_C\rightarrow\infty}\frac{w_1(t,x)}{t+|x|_C}<0;\ \ \  \limsup_{t+|x|_C\rightarrow\infty}\frac{w_2(t,x)}{t+|x|_C}<0.
\end{eqnarray}
Let $\varphi\in C^2( \mathbb{R}^{d}\times \mathbb{R}^{d})$ be such that
$$
                w_1(t,x)+w_2(t,y)-\varphi(x(0),y(0))
$$
has a 
 maximum over $[\hat{t},\infty)\times {\cal{C}}_0\times {\cal{C}}_0$ at a point $(\hat{t}, \hat{x}, \hat{y})$. 
 Assume, moreover, $\tilde{w}_{1}^{\hat{t},*}\in \Phi(\hat{t},\hat{x}(0),T)$ and $\tilde{w}_{2}^{\hat{t},*}\in \Phi(\hat{t},\hat{y}(0),T)$ for some $T\in (\hat{t},\infty)$, and there exists a local modulus of continuity  $\rho_1$  such that, for all  $\hat{t}\leq t\leq s\leq T, \ x\in {\cal{C}}_0$,
\begin{eqnarray}\label{0608a}
w_1(t,x)-w_1(s,x_{s-t})\leq \rho_1(|s-t|,|x|_C),\ \  w_2(t,x)-w_2(s,x_{s-t})\leq \rho_1(|s-t|,|x|_C).
\end{eqnarray}
  Then there exist the
sequences  $(t_{k},x^{k}), (s_{k},y^{k})\in \Lambda^{\hat{t}}$ and
 the sequences of functionals $\varphi_k\in C_p^{1,2}(\Lambda^{t_k})$, $\psi_k\in C_p^{1,2}(\Lambda^{s_k})$ 
   such that $\varphi_k$, $\partial_t\varphi_k$, $\partial_x\varphi_k$, $\partial_{xx}\varphi_k$, $\psi_k,\partial_t\psi_k$, $\partial_x\psi_k,\partial_{xx}\psi_k$ are bounded and uniformly continuous, and such that
$$
 w_{1}(t,x)-\varphi_k(t,x)
$$
has a strict global maximum $0$ at  $(t_k,x^{k})$ over $\Lambda^{t_k}$,
$$
w_{2}(t,y)-\psi_k(t,y)
$$
has a strict global maximum $0$ at $(s_k,y^{k})$ over $\Lambda^{s_k}$, and
\begin{eqnarray}\label{0608v}
       &&\left(t_{k}, x^{k}(0), w_1(t_{k},x^k),\partial_t\varphi_k(t_{k},x^k),\partial_x\varphi_k(t_{k},x^k),\partial_{xx}\varphi_k(t_{k},x^k)\right)\nonumber\\
       &&\underrightarrow{k\rightarrow\infty}\left({\hat{t}},\hat{x}(0), w_1(\hat{t},\hat{x}),b_1, \nabla_{x_1}\varphi(\hat{x}(0),\hat{y}(0)), X\right),
\end{eqnarray}
\begin{eqnarray}\label{0608vw}
       &&\left(s_{k}, y^k(0), w_2(s_k,y^k),\partial_t\psi_k(s_k,y^k),\partial_x\psi_k(s_k,y^k),\partial_{xx}\psi_k(s_k,y^k)\right)\nonumber\\
       &&\underrightarrow{k\rightarrow\infty}\left({\hat{t}},\hat{y}(0), w_2(\hat{t}, \hat{y}),b_2, \nabla_{x_2}\varphi(\hat{x}(0),\hat{y}(0)), Y\right),
\end{eqnarray}
 where $b_{1}+b_{2}=0$ and $X,Y\in \mathcal{S}(\mathbb{R}^{d})$ satisfy the following inequality:
\begin{eqnarray}\label{II0615}
                              -\left(\frac{1}{\kappa}+|A|\right)I\leq \left(\begin{array}{cc}
                                    X&0\\
                                    0&Y
                                    \end{array}\right)\leq A+\kappa A^2,
\end{eqnarray}
and  $A=\nabla^2_{x}\varphi(\hat{x}(0),\hat{y}(0))$. Here $\nabla_{x_1}\varphi$ and $\nabla_{x_2}\varphi$ denote  the standard  first order derivative of $\varphi$
with respect to  the first variable and  the second variable, respectively.
\end{theorem}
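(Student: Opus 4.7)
The plan is to apply the classical finite-dimensional parabolic Crandall--Ishii maximum principle to the upper semicontinuous envelopes $u_j:=\tilde{w}_j^{\hat{t},*}$ on $[0,T]\times\mathbb{R}^d$, then lift the resulting jets back to path space using the smooth gauge-type functional $\overline{\Upsilon}$ from Section 3.

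First, I would perform the finite-dimensional reduction. From the identity $\tilde{w}_j^{\hat{t}}(t,x_0)=\sup_{\xi(0)=x_0}w_j(t,\xi)$ and the maximum property of $w_1(t,x)+w_2(t,y)-\varphi(x(0),y(0))$ at $(\hat t,\hat x,\hat y)$, one checks that $u_j(\hat{t},\hat{x}(0))=w_j(\hat{t},\hat{x})$ (and symmetrically at $\hat y$), whence $u_1(t,x_0)+u_2(t,y_0)-\varphi(x_0,y_0)$ attains its maximum on $[\hat{t},\infty)\times\mathbb{R}^d\times\mathbb{R}^d$ at $(\hat{t},\hat{x}(0),\hat{y}(0))$. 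Condition (\ref{05131}) restricts the relevant domain to $[\hat t,T]\times\mathbb{R}^d\times\mathbb{R}^d$, and the $\Phi$-hypothesis is precisely the strict-forward-time condition that makes Theorem 8.3 in \cite{cran2} applicable. That result produces sequences $(t_k,x_0^k)\to(\hat t,\hat x(0))$, $(s_k,y_0^k)\to(\hat t,\hat y(0))$ in $(0,T)\times\mathbb{R}^d$ together with functions $g_k,\tilde g_k\in C^{1,2}([0,T]\times\mathbb{R}^d)$ (having bounded, uniformly continuous derivatives after truncation) for which $u_1-g_k$ and $u_2-\tilde g_k$ have strict global maxima equal to zero at $(t_k,x_0^k),(s_k,y_0^k)$, with derivative convergences
\begin{eqnarray*}
(\partial_t g_k,\nabla g_k,\nabla^2 g_k)(t_k,x_0^k)&\to&(b_1,\nabla_{x_1}\varphi(\hat x(0),\hat y(0)),X),\\
(\partial_t \tilde g_k,\nabla \tilde g_k,\nabla^2 \tilde g_k)(s_k,y_0^k)&\to&(b_2,\nabla_{x_2}\varphi(\hat x(0),\hat y(0)),Y),
\end{eqnarray*}
$b_1+b_2=0$, and (\ref{II0615}) holding.

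Second, I would lift back to path space. For each $k$, choose $x^k,y^k\in{\cal{C}}_0$ with $x^k(0)=x_0^k$, $y^k(0)=y_0^k$ and $w_1(t_k,x^k)\geq u_1(t_k,x_0^k)-1/k$, $w_2(s_k,y^k)\geq u_2(s_k,y_0^k)-1/k$, using the supremum definition of $\tilde{w}_j^{\hat{t}}$. Set
\begin{eqnarray*}
\varphi_k(t,x):=g_k(t,x(0))+c_k\,\overline{\Upsilon}(t,x,t_k,x^k)+w_1(t_k,x^k)-g_k(t_k,x_0^k),
\end{eqnarray*}
and analogously $\psi_k$. By Lemma \ref{theoremS} and Remark \ref{remarks}(i), $\varphi_k\in C^{1,2}_p(\Lambda^{t_k})$; by (\ref{0528a})--(\ref{0612c}) the functional $\overline{\Upsilon}(\cdot,\cdot,t_k,x^k)$ vanishes together with its horizontal and its first and second vertical derivatives at $(t_k,x^k)$, so by the chain rule the pathwise derivatives of $\varphi_k$ at $(t_k,x^k)$ coincide with $(\partial_t g_k,\nabla g_k,\nabla^2 g_k)(t_k,x_0^k)$, giving the required convergences (\ref{0608v})--(\ref{0608vw}). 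Choosing $c_k$ sufficiently large, but uniformly bounded in $k$ via Lemma \ref{theoremS000}, the strict maximum of $u_1-g_k$, and the regularity (\ref{0608a}), forces $w_1-\varphi_k$ to attain its maximum on $\Lambda^{t_k}$ at $(t_k,x^k)$. A final application of the Borwein--Preiss variational principle (Lemma \ref{theoremleft}) with gauge $\overline{\Upsilon}$ and a rapidly decaying sequence $\{\delta_i\}$ upgrades this to a strict global maximum while perturbing the derivatives by an amount absorbed into the sequence.

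The main obstacle is the effectiveness of the penalty in the lifting step: one has to show that $w_1(t,x)-g_k(t,x(0))$ is essentially controlled by the endpoint $x(0)$ alone, so that adding $c_k\overline{\Upsilon}(t,x,t_k,x^k)$ genuinely confines any would-be maximizer to a $d_\infty$-neighborhood of $(t_k,x^k)$ and not to some competing path sharing the same endpoint. This requires combining the identity $\tilde w_1^{\hat t}(t,x_0)=\sup_{\xi(0)=x_0}w_1(t,\xi)$ to reduce to an endpoint-only problem, the strict finite-dimensional maximum of $u_1-g_k$, the horizontal regularity (\ref{0608a}) to exclude drift in the $d_\infty$-horizontal direction, the decay (\ref{05131}) to confine the search to a bounded set, and the $\Phi$-hypothesis to prevent slippage to $t<\hat t$. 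Putting these together one also obtains $w_j(t_k,x^k)\to w_j(\hat t,\hat x)$, which, combined with the pathwise derivative convergences above, completes the proof.
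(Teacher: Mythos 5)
Your finite-dimensional reduction (restricting to the envelopes $\tilde w_j^{\hat t,*}$, showing they agree with $w_j$ at the max point, and applying Theorem 8.3 of \cite{cran2}) is the same as the paper's, and it is correct. The lifting step, however, has a genuine gap. You propose to set
$\varphi_k(t,x)=g_k(t,x(0))+c_k\,\overline{\Upsilon}(t,x,t_k,x^k)+\text{const.}$ and claim that a large $c_k$ forces $w_1-\varphi_k$ to attain its maximum on $\Lambda^{t_k}$ at the pre-chosen path $(t_k,x^k)$. This cannot work: $\overline{\Upsilon}(t_k,x,t_k,x^k)$ vanishes to sixth order in $|x-x^k|_C$ near $x^k$ (by (\ref{s0}) it is comparable to $|x-x^k|_C^6$), whereas $w_1$ is in general only Lipschitz in $x$. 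Since $x^k$ was chosen to achieve the supremum defining $\tilde w_1^{\hat t}(t_k,x_0^k)$ only up to $1/k$, there is a competing path $x'$ with $x'(0)=x^k(0)$, $\,0<|x'-x^k|_C$ arbitrarily small and $w_1(t_k,x')>w_1(t_k,x^k)$. For such $x'$ the gain $w_1(t_k,x')-w_1(t_k,x^k)$ decays no faster than linearly while the penalty $c_k\overline{\Upsilon}$ decays like $|x'-x^k|_C^6$, so no finite $c_k$ dominates, and $(t_k,x^k)$ is not the maximizer. Consequently the chain-rule computation of $\partial_t\varphi_k,\partial_x\varphi_k,\partial_{xx}\varphi_k$ at $(t_k,x^k)$ is evaluated at the wrong point and the convergences (\ref{0608v})--(\ref{0608vw}) do not follow.

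You flag this as ``the main obstacle'' and propose to close it by a further Borwein--Preiss step, but that step is doing all the real work and is not carried out. The paper's resolution is structurally different: it does not insist the maximizer be the pre-selected lift $(t_k,x^k)$. Instead it applies Borwein--Preiss (Lemma \ref{theoremleft}) with the gauge $\overline\Upsilon$ directly to the \emph{joint} functional $\Gamma_k(t,x,s,y)=w_1(t,x)+w_2(s,y)-\tilde\varphi_k(t,x(0))-\tilde\psi_k(s,y(0))$ over $\Lambda^{\hat t}\times\Lambda^{\hat t}$, obtaining genuinely new maximizers $(t_{k,j},x^{k,j},s_{k,j},y^{k,j})$ of the perturbed functional, and then proves a nontrivial convergence statement (Lemma \ref{lemma4.40615}): $(t_{k,j},x^{k,j}(0))\to(t_k,x_0^k)$ and $w_1(t_{k,j},x^{k,j})\to \tilde w_1^{\hat t,*}(t_k,x_0^k)$ as $j\to\infty$, using uniform boundedness from (\ref{05131}) and the strictness of the finite-dimensional maximum. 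A diagonal choice $j_k$ then gives the required sequences, and the $\varphi_k,\psi_k$ in the statement are $\tilde\varphi_k(t,x(0))$ (resp.\ $\tilde\psi_k$) plus the accumulated Borwein--Preiss perturbations $\sum_i 2^{-i}\overline{\Upsilon}(\check t_i,\check x^i,\cdot,\cdot)$, whose derivatives at the maximizer are controlled by the decay in property (i) of Lemma \ref{theoremleft}. Your proposal omits exactly this passage — the joint Borwein--Preiss localization and the convergence of the perturbed maximizers back to $(\hat t,\hat x)$ and $(\hat t,\hat y)$ — and the route you sketch in its place fails.
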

\par
   {\bf  Proof  }. \ \
By the following Lemma \ref{lemma4.30615}, we have that
\begin{eqnarray}\label{05211}
\tilde{w}_{1}^{\hat{t},*}(t,x_0)+ \tilde{w}_{2}^{\hat{t},*}(t,y_0)-\varphi(x_0,y_0) \ \mbox{has a  maximum over}\  [0,\infty)\times \mathbb{R}^{d}\times \mathbb{R}^{d} \
\mbox{ at}\ (\hat{t},\hat{x}(0),\hat{y}(0)).
\end{eqnarray}
 Moreover, we have
  $\tilde{w}_{1}^{\hat{t},*}(\hat{t},\hat{x}(0))={w}_{1}(\hat{t}, \hat{x})$, $\tilde{w}_{2}^{\hat{t},*}(\hat{t},\hat{y}(0))={w}_{2}(\hat{t},\hat{y})$.
  Then,  by $\tilde{w}_{1}^{\hat{t},*}\in \Phi(\hat{t},\hat{x}(0),T)$, $\tilde{w}_{2}^{\hat{t},*}\in \Phi(\hat{t},\hat{y}(0),T)$ for some $T\in (\hat{t},\infty)$ and Remark \ref{remarkv0528}, the Theorem 8.3 in \cite{cran2} can be used  to obtain  sequences of bounded  functions
  $\tilde{\varphi}_k,\tilde{\psi}_k\in C^{1,2}([0,T]\times \mathbb{R}^{d})$ with bounded and uniformly continuous derivatives  such that
$\tilde{w}_{1}^{\hat{t},*}(t, x_0)-\tilde{\varphi}_k(t,x_0)$ has a strict global maximum $0$ at some point $(t_k,x_0^{k})\in (0,T)\times \mathbb{R}^d$ over {$[0,T]\times \mathbb{R}^d$}, $\tilde{w}_{2}^{\hat{t},*}(s,y_0)-\tilde{\psi}_k(s,y_0)$ has a strict global maximum $0$ at some point $(s_k,y_0^{k})\in (0,T)\times \mathbb{R}^d$ over { $[0,T]\times \mathbb{R}^d$}, and such that
\begin{eqnarray}\label{0607a}
       &&\left(t_{k}, x_0^k, \tilde{w}_{1}^{\hat{t},*}(t_{k}, x_0^k), (\tilde{\varphi}_k)_t(t_{k}, x_0^k),\nabla_x\tilde{\varphi}_k(t_{k}, x_0^k),\nabla^2_x\tilde{\varphi}_k(t_{k}, x_0^k)\right)\nonumber\\
       &&\underrightarrow{k\rightarrow\infty}\left({\hat{t}},\hat{x}(0), w_1(\hat{t},\hat{x}),b_1, \nabla_{x_1}\varphi(\hat{x}(0),\hat{y}(0)), X\right),
\end{eqnarray}
\begin{eqnarray}\label{0607b}
       &&\left(s_{k}, y_0^k, \tilde{w}_{2}^{\hat{t},*}(s_{k}, y_0^k),(\tilde{\psi}_k)_t(s_{k}, y_0^k),\nabla_x\tilde{\psi}_k(s_{k}, y_0^k),\nabla^2_x\tilde{\psi}_k(s_{k}, y_0^k)\right)\nonumber\\
       &&\underrightarrow{k\rightarrow\infty}\left({\hat{t}},\hat{y}(0), w_2(\hat{t},\hat{y}),b_2, \nabla_{x_2}\varphi(\hat{x}(0),\hat{y}(0)), Y\right),
\end{eqnarray}
  where $b_{1}+b_{2}=0$ and (\ref{II0615}) is satisfied.
  \\
  We claim that we can assume the  sequences $\{t_{k}\}_{k\geq1}\in [\hat{t},T]$ and $\{s_{k}\}_{k\geq1}\in [\hat{t},T]$. Indeed, if not, for example, there exists a subsequence of $\{t_{k}\}_{k\geq1}$ still denoted by itself such that $t_{k}<\hat{t}$ for all $k\geq0$.
Since $\tilde{w}_{1}^{\hat{t},*}(t, x_0)-\tilde{\varphi}_k(t,x_0)$
            has a maximum at $(t_{k},x_0^{k})$ on $[0,T]\times \mathbb{R}^{d}$, we obtain that
            $$
           (\tilde{\varphi}_k)_t(t_{k},x_0^{k})={\frac{1}{2}}(\hat{t}-t_{k})^{-\frac{1}{2}}\rightarrow\infty,\ \mbox{as}\ k\rightarrow\infty,
           $$
            which  contradicts   that $(\tilde{\varphi}_k)_t(t_{k},x_0^{k})\rightarrow b_{1}$, $(\tilde{\psi}_k)_t(s_{k},y_0^{k})\rightarrow b_{2}$ and $b_{1}+b_{2}=0$.
               \par
 We can modify $\tilde{\varphi}_k,\tilde{\psi}_k$  such that $\tilde{\varphi}_k,\tilde{\psi}_k\in C^{1,2}([0,\infty)\times \mathbb{R}^{d})$ with bounded and uniformly continuous derivatives,
 $\tilde{w}_{1}^{\hat{t},*}(t,x_0)+\tilde{w}_{2}^{\hat{t},*}(s,y_0)-\tilde{\varphi}_k(t,x_0)-\tilde{\psi}_k(s,y_0)$ has a strict global maximum $0$
 at
 $(t_k,x_0^{k},s_k,y_0^{k})$  on $[0,\infty)\times\mathbb{R}^d\times [0,\infty)\times\mathbb{R}^d$,
 and (\ref{0607a}) and (\ref{0607b})  hold true.
  Now we consider the functional,
              for $(t,x), (s,y)\in \Lambda^{\hat{t}}$,
\begin{eqnarray}\label{4.1111}
                 \Gamma_{k}(t,x,s,y)= w_{1}(t,x)+w_{2}(s,y)
                -\tilde{\varphi}_k(t,x(0))-\tilde{\psi}_k(s,y(0)).
\end{eqnarray}
 It is clear that $\Gamma_{k}$ is an upper semicontinuous functional  bounded from above on ${\Lambda}^{\hat{t}}\times{\Lambda}^{\hat{t}}$.  
 Define a sequence of positive numbers $\{\delta_i\}_{i\geq0}$  by 
        $\delta_i=\frac{1}{2^i}$ for all $i\geq0$.  For every $k$ and $j>0$,
           from Lemma \ref{theoremleft} it follows that,
  for every  $(\check{t}_{0},\check{x}^{0}), (\check{s}_{0},\check{y}^{0})\in   \Lambda^{\hat{t}}$ satisfying
\begin{eqnarray}\label{20210509a}
\Gamma_{k}(\check{t}_{0},\check{x}^{0},\check{s}_{0},\check{y}^{0})\geq \sup_{(t,\gamma_t),(s,\eta_s)\in  \Lambda^{\hat{t}}}\Gamma_{k}(t,x,s,y)-\frac{1}{j},\
\end{eqnarray}
  there exist $(t_{k,j},x^{k,j}), (s_{k,j},y^{k,j})\in [\hat{t},\infty)\times \Lambda^{\hat{t}}$ and two sequences $\{(\check{t}_{i},\check{x}^{i})\}_{i\geq1}, \{(\check{s}_{i},\check{y}^{i})\}_{i\geq1}\subset
  [\hat{t},\infty)\times \Lambda^{\hat{t}}$ such that
  \begin{description}
        \item{(i)} $\overline{\Upsilon}(\check{t}_{0},\check{x}^{0},t_{k,j},x^{k,j})+\overline{\Upsilon}(\check{s}_0,\check{y}^0,s_{k,j},y^{k,j})\leq \frac{1}{j}$,
         $\overline{\Upsilon}(\check{t}_{i},\check{x}^{i},t_{k,j},x^{k,j})
         +\overline{\Upsilon}(\check{s}_{i},\check{y}^{i},s_{k,j},y^{k,j})\leq \frac{1}{2^ij}$
          and $\check{t}_{i}\uparrow t_{k,j}$, $\check{s}_{i}\uparrow s_{k,j}$ as $i\rightarrow\infty$,
        \item{(ii)}  $\Gamma_k(t_{k,j},x^{k,j},s_{k,j},y^{k,j})
            -\sum_{i=0}^{\infty}\frac{1}{2^i}[\overline{\Upsilon}(\check{t}_{i},\check{x}^{i},t_{k,j},x^{k,j})
         +\overline{\Upsilon}(\check{s}_{i},\check{y}^{i},s_{k,j},y^{k,j})]\geq \Gamma_{k}(\check{t}_{0},\check{x}^{0},\check{s}_{0},\check{y}^{0})$, and
        \item{(iii)}    for all $(t,x,s,y)\in  \Lambda^{t_{k,j}}\times \Lambda^{s_{k,j}}\setminus \{(t_{k,j},x^{k,j}, s_{k,j},y^{k,j})\}$,
        \begin{eqnarray*}
       && \Gamma_{k}(t,x,s,y)
        -\sum_{i=0}^{\infty}
        \frac{1}{2^i}[\overline{\Upsilon}(\check{t}_{i},\check{x}^{i},t,x)+\overline{\Upsilon}(\check{s}_{i},\check{y}^{i},s,y)]\\
           & <&\Gamma_k(t_{k,j},x^{k,j},s_{k,j},y^{k,j})
            -\sum_{i=0}^{\infty}\frac{1}{2^i}[\overline{\Upsilon}(\check{t}_{i},\check{x}^{i},t_{k,j},x^{k,j})
         +\overline{\Upsilon}(\check{s}_{i},\check{y}^{i},s_{k,j},y^{k,j})].
        \end{eqnarray*}
        \end{description}
  By the following Lemma \ref{lemma4.40615}, we have
\begin{eqnarray}\label{4.22}
        (t_{k,j}, x^{k,j}(0))\rightarrow (t_k,x_0^k),\ (s_{k,j}, y^{k,j}(0))\rightarrow (s_k,y_0^k) \ \mbox{as}\ j\rightarrow\infty,
       \end{eqnarray}
\begin{eqnarray}\label{05231}
        \tilde{w}_{1}^{\hat{t},*}(t_{k,j}, x^{k,j}(0))\rightarrow  \tilde{w}_{1}^{\hat{t},*}(t_k,x_0^k),\ \tilde{w}_{2}^{\hat{t},*}(s_{k,j},  y^{k,j}(0))\rightarrow  \tilde{w}_{2}^{\hat{t},*}(s_k,y_0^k) \ \mbox{as}\ j\rightarrow\infty,
       \end{eqnarray}
        and 
\begin{eqnarray}\label{05232}
        w_1(t_{k,j},{x}^{k,j})\rightarrow  \tilde{w}_{1}^{\hat{t},*}(t_k,x_0^k),\ w_2(s_{k,j},y^{k,j})\rightarrow  \tilde{w}_{2}^{\hat{t},*}(s_k,y_0^k) \ \mbox{as}\ j\rightarrow\infty.
       \end{eqnarray}
        Using these and (\ref{0607a}) and (\ref{0607b}) we can therefore select a subsequence $j_k$ such that
\begin{eqnarray*}
       &&\left(t_{k,j_k}, {x}^{k,j_k}(0), w_1(t_{k,j_k},{x}^{k,j_k}),((\tilde{\varphi}_k)_t,\nabla_x\tilde{\varphi}_k,\nabla^2_x\tilde{\varphi}_k)(t_{k,j_k}, {x}^{k,j_k}(0))\right)\\
       &&\underrightarrow{k\rightarrow\infty}\left({\hat{t}},\hat{x}(0), w_1(\hat{t},\hat{x}),(b_1, \nabla_{x_1}\varphi(\hat{x}(0),\hat{y}(0)), X)\right),
\end{eqnarray*}
\begin{eqnarray*}
       &&\left(s_{k,j_k}, {y}^{k,j_k}(0), w_2(s_{k,j_k},{y}^{k,j_k}),((\tilde{\psi}_k)_t,\nabla_x\tilde{\psi}_k,\nabla^2_x\tilde{\psi}_k)(s_{k,j_k}, {y}^{k,j_k}(0))\right)\\
       &&\underrightarrow{k\rightarrow\infty}\left({\hat{t}},\hat{y}(0), w_2(\hat{t},\hat{y}),(b_2, \nabla_{x_2}\varphi(\hat{x}(0),\hat{y}(0)), Y)\right).
\end{eqnarray*}
 We notice that, by   (\ref{0528a}), (\ref{0528b}), (\ref{0612b}), (\ref{0612c}), the definition of $\Upsilon$ and the property (i) of $(t_{k,j}, x^{k,j}$, $s_{k,j}, y^{k,j})$, there exists a generic constant $C>0$ such that
  \begin{eqnarray*}
  2\sum_{i=0}^{\infty}\frac{1}{2^i}[(s_{k,j_k}-\check{s}_{i})+(t_{k,j_k}-\check{t}_{i})]
  \leq Cj_k^{-\frac{1}{2}};
    \end{eqnarray*}
     \begin{eqnarray*}
  \bigg{|}\partial_x\left[\sum_{i=0}^{\infty}\frac{1}{2^i}
                      \Upsilon({x}^{k,j_k}-\check{x}^{i}_{t_{k,j_k}-\check{t}_{i}})\right]\bigg{|}
                      +\bigg{|}\partial_x\left[\sum_{i=0}^{\infty}\frac{1}{2^i}
                      \Upsilon({y}^{k,j_k}-\check{y}^{i}_{s_{k,j_k}-\check{s}_{i}})\right]\bigg{|}
\leq Cj_k^{-\frac{5}{6}};
                        \end{eqnarray*}
                        and
\begin{eqnarray*}
                         \bigg{|}\partial_{xx}\left[\sum_{i=0}^{\infty}\frac{1}{2^i}
                      \Upsilon({x}^{k,j_k}-\check{x}^{i}_{t_{k,j_k}-\check{t}_{i}})\right]\bigg{|}
                      +\bigg{|}\partial_{xx}\left[\sum_{i=0}^{\infty}\frac{1}{2^i}
                      \Upsilon({y}^{k,j_k}-\check{y}^{i}_{s_{k,j_k}-\check{s}_{i}})\right]\bigg{|}
\leq Cj_k^{-\frac{2}{3}}.
  \end{eqnarray*}
Therefore the lemma holds with $\varphi_k(t,x):=\tilde{\varphi}_k(t,x(0))+\sum_{i=0}^{\infty}\frac{1}{2^i}\overline{\Upsilon}(\check{t}_{i},\check{x}^{i},t,x)
        $, $\psi_k(s,y):=\tilde{\psi}_k(s,y(0))+\sum_{i=0}^{\infty}\frac{1}{2^i}\overline{\Upsilon}(\check{s}_{i},\check{y}^{i},s,y)$ and $t_{k}:=t_{k,j_k}, {x}^{k}:={x}^{k,j_k}, s_{k}:=s_{k,j_k}, {y}^{k}:={y}^{k,j_k}$.\ \ $\Box$
        \begin{remark}\label{remarkv0528}
As mentioned in Remark 6.1 in Chapter V of \cite{fle1},  Condition (\ref{05281}) is stated with reverse inequality in Theorem 8.3 of \cite{cran2}. However, we  immediately obtain  results (\ref{0608v})-(\ref{II0615}) from Theorem
8.3 of \cite{cran2} by considering the functions $u_1(t,x) :=\tilde{w}_{1}^{\hat{t},*}(T-t,x)$ and
$u_2(t,x) := \tilde{w}_{2}^{\hat{t},*}(T-t,x)$.
\end{remark}

        \par
 To complete the  proof of Theorem \ref{theorem0513}, it remains to state and prove the following  two lemmas.
\begin{lemma}\label{lemma4.30615}\ \
               $\tilde{w}_{1}^{\hat{t},*}(t,x_0)+ \tilde{w}_{2}^{\hat{t},*}(t,y_0)-\varphi(x_0,y_0)$ has a  maximum over $  [0,\infty)\times \mathbb{R}^{d}\times \mathbb{R}^{d} $
 at $(\hat{t},\hat{x}(0),\hat{y}(0))$.
 Moreover, we have
 \begin{eqnarray}\label{2020020201}
  \tilde{w}_{1}^{\hat{t},*}(\hat{t},\hat{x}(0))={w}_{1}(\hat{t},\hat{x}), \ \ \tilde{w}_{2}^{\hat{t},*}(\hat{t},\hat{y}(0))={w}_{2}(\hat{t},\hat{y}).
  \end{eqnarray}
\end{lemma}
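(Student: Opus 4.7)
The plan is to prove the two claims in tandem, using condition (\ref{0608a}) together with the growth hypothesis (\ref{05131}) as the central tool to lift inequalities from $w_i$ on the path space to $\tilde{w}_i^{\hat{t}}$ on $[0,\infty)\times \mathbb{R}^d$, and finally to their upper semicontinuous envelopes.

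For the identity $\tilde{w}_1^{\hat{t},*}(\hat{t},\hat{x}(0))=w_1(\hat{t},\hat{x})$, I would first specialize the maximum of $w_1(t,x)+w_2(t,y)-\varphi(x(0),y(0))$ at $(\hat{t},\hat{x},\hat{y})$ to fixed $y=\hat{y}$ and varying $x$ with $x(0)=\hat{x}(0)$, obtaining $w_1(\hat{t},\xi)\le w_1(\hat{t},\hat{x})$ for every such $\xi$, and hence $\tilde{w}_1^{\hat{t}}(\hat{t},\hat{x}(0))=w_1(\hat{t},\hat{x})$; the analogous statement for $\tilde{w}_2^{\hat{t}}(\hat{t},\hat{y}(0))$ is symmetric. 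To bound the envelope from above, I take an arbitrary sequence $(s_n,a_n)\to(\hat{t},\hat{x}(0))$; if $s_n<\hat{t}$ the subtraction in Definition \ref{definition0607} gives $\tilde{w}_1^{\hat{t}}(s_n,a_n)\le \tilde{w}_1^{\hat{t}}(\hat{t},a_n)$, while if $s_n\ge\hat{t}$ I choose near-optimal $\xi_n$ with $\xi_n(0)=a_n$ and insert $(s_n,\xi_n,\hat{y}_{s_n-\hat{t}})$ into the original maximum: the $w_2$ correction $w_2(\hat{t},\hat{y})-w_2(s_n,\hat{y}_{s_n-\hat{t}})$ is controlled by (\ref{0608a}) via $\rho_1(s_n-\hat{t},|\hat{y}|_C)\to 0$, and continuity of $\varphi$ handles the remaining term $\varphi(a_n,\hat{y}(0))-\varphi(\hat{x}(0),\hat{y}(0))$, giving $\limsup_n w_1(s_n,\xi_n)\le w_1(\hat{t},\hat{x})$ and thus $\tilde{w}_1^{\hat{t},*}(\hat{t},\hat{x}(0))\le w_1(\hat{t},\hat{x})$.

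For the global maximum property, set $M:=w_1(\hat{t},\hat{x})+w_2(\hat{t},\hat{y})-\varphi(\hat{x}(0),\hat{y}(0))$, so that by the first part $F(\hat{t},\hat{x}(0),\hat{y}(0))=M$ where $F:=\tilde{w}_1^{\hat{t},*}+\tilde{w}_2^{\hat{t},*}-\varphi$. The preliminary bound $G(s,x_0,y_0):=\tilde{w}_1^{\hat{t}}(s,x_0)+\tilde{w}_2^{\hat{t}}(s,y_0)-\varphi(x_0,y_0)\le M$ follows, for $s\ge\hat{t}$, by taking independent suprema over $\xi(0)=x_0$ and $\eta(0)=y_0$ in the maximum inequality (the variables separate), and for $s<\hat{t}$ by the subtraction definition applied at $s=\hat{t}$. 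The hard step is lifting this pointwise bound on $G$ to a bound on $F$: since in general $(f_1+f_2)^*\ne f_1^*+f_2^*$, we cannot merely take envelopes. To handle this, for $t\ge\hat{t}$ and $\epsilon>0$, I would select $(s^{(1)},a)$ near $(t,x_0)$ and $(s^{(2)},b)$ near $(t,y_0)$ with $\tilde{w}_i^{\hat{t}}$ approximating $\tilde{w}_i^{\hat{t},*}(t,\cdot)$ within $\epsilon$ and $s^{(i)}\ge\hat{t}$; WLOG $s^{(1)}\ge s^{(2)}$ (swap the roles of $w_1,w_2$ otherwise). Choosing a near-optimal $\eta^*$ with $\eta^*(0)=b$ for $\tilde{w}_2^{\hat{t}}(s^{(2)},b)$, the growth hypothesis (\ref{05131}) forces $|\eta^*|_C\le R$ uniformly; then (\ref{0608a}) gives $\tilde{w}_2^{\hat{t}}(s^{(1)},b)\ge w_2(s^{(1)},\eta^*_{s^{(1)}-s^{(2)}})\ge w_2(s^{(2)},\eta^*)-\rho_1(s^{(1)}-s^{(2)},R)$, using $\eta^*_{s^{(1)}-s^{(2)}}(0)=b$. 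This transfers both approximating values to the common time $s^{(1)}$, where $G(s^{(1)},a,b)\le M$ yields $\tilde{w}_1^{\hat{t},*}(t,x_0)+\tilde{w}_2^{\hat{t},*}(t,y_0)\le M+\varphi(a,b)+3\epsilon+\rho_1$; letting $\epsilon\downarrow 0$, $(a,b)\to(x_0,y_0)$ and $\rho_1\downarrow 0$ produces $F(t,x_0,y_0)\le M$.

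The remaining case $t<\hat{t}$ is automatic: any sequence $(s_n,a_n)\to(t,x_0)$ must have $s_n<\hat{t}$ eventually, so the definition forces $\tilde{w}_i^{\hat{t},*}(t,\cdot)\le \tilde{w}_i^{\hat{t},*}(\hat{t},\cdot)-(\hat{t}-t)^{1/2}$, whence $F(t,x_0,y_0)\le F(\hat{t},x_0,y_0)-2(\hat{t}-t)^{1/2}\le M$ by the case just proved. The main obstacle is precisely the alignment of the two independent approximating sequences to a common time slice, which is resolved by the joint use of the time continuity (\ref{0608a}) and the uniform boundedness of near-optimizers provided by the growth condition (\ref{05131}); the rest of the argument is routine bookkeeping.
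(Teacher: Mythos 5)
Your proposal is correct and follows essentially the same route as the paper: in both arguments the crux is to handle the fact that the upper semicontinuous envelope of a sum is not the sum of envelopes, which you resolve by shifting one approximating sequence to a common time slice using the one-sided time-continuity condition (\ref{0608a}) together with the uniform boundedness of near-optimizers supplied by the growth condition (\ref{05131}) — precisely the mechanism behind the paper's inequalities (\ref{202105083})--(\ref{20210704a}). The only difference is organizational: you first establish the identity (\ref{2020020201}) by a direct insertion argument and then the global maximum via your preliminary bound on $G$, whereas the paper obtains both simultaneously from the single chain (\ref{20210508b})--(\ref{0608abc}).
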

\par
   {\bf  Proof  }. \ \
For every $\hat{t}\leq t\leq s<\infty$ and $x_0\in  \mathbb{R}^{d}$, 
from the definition of $\tilde{w}^{\hat{t}}_1$ and (\ref{05131})
 there exists a constant $C_{x_0}>0$ depending only on   $x_0$ such that
\begin{eqnarray*}
                             \tilde{w}^{\hat{t}}_1(t,x_0)-\tilde{w}^{\hat{t}}_1(s,x_0)
                             &=&\sup_{\xi\in {\cal{C}}_0,\xi(0)=x_0}
                             {[}w_{1}(t,\xi)
                             {]}-\sup_{\eta\in {\cal{C}}_0,\eta_s(0)=x_0}
                             {[}w_{1}(s,\eta)
                            {]}\nonumber\\
                             &=&\sup_{\xi\in {\cal{C}}_0,|\xi|_C\leq C_{x_0},\xi(0)=x_0}
                             {[}w_{1}(t,\xi)
                             {]}-\sup_{\eta\in {\cal{C}}_0,\eta_s(0)=x_0}
                             {[}w_{1}(s,\eta)
                            {]}\nonumber\\
                            &\leq& \sup_{\xi\in {\cal{C}}_0,|\xi|_C\leq C_{x_0},\xi(0)=x_0}
                            {[}w_{1}(t,\xi)
                            -w_{1}(s,\xi_{s-t})
                            {]}.
\end{eqnarray*}
By (\ref{0608a}) we have that,
\begin{eqnarray}\label{202105083}
\tilde{w}^{\hat{t}}_1(t,x_0)-\tilde{w}^{\hat{t}}_1(s,x_0)\leq \sup_{\xi\in {\cal{C}}_0,|\xi|_C\leq C_{x_0},\xi(0)=x_0}\rho_1(|s-t|,|\xi|_C)
                             \leq \rho_1(|s-t|,C_{x_0}).
\end{eqnarray}
Clearly, if $0\leq t\leq s\leq \hat{t}$, we have
\begin{eqnarray}\label{202105084}
                             \tilde{w}^{\hat{t}}_1(t,x_0)-\tilde{w}^{\hat{t}}_1(s,x_0)=-(\hat{t}-t)^{\frac{1}{2}}+(\hat{t}-s)^{\frac{1}{2}}\leq 0,
\end{eqnarray}
and, if $0\leq t\leq  \hat{t}\leq s<\infty$ , we have
\begin{eqnarray}\label{202105085}
                             \tilde{w}^{\hat{t}}_1(t,x_0)-\tilde{w}^{\hat{t}}_1(s,x_0)\leq\tilde{w}^{\hat{t}}_1(\hat{t},x_0)-\tilde{w}^{\hat{t}}_1(s,x_0) \leq \rho_1(|s-\hat{t}|,C_{x_0}).
\end{eqnarray}
On the other hand, 
 for every $(t,x_0,y_0)\in [0,\infty)\times \mathbb{R}^{d}\times \mathbb{R}^{d}$,
  by the definitions of $\tilde{w}_{1}^{\hat{t},*}(t,x_0)$ and $ \tilde{w}_{2}^{\hat{t},*}(t,y_0)$, there exist  sequences  $(l_i,x_i), (\tau_i,y_i)\in [0,\infty)\times \mathbb{R}^d$ such that
   $(l_{i},x_i)\rightarrow (t,x_0)$, $(\tau_{i},y_i)\rightarrow (t,y_0)$
                 as $i\rightarrow\infty$ and
   \begin{eqnarray}\label{202105081}
       \tilde{w}_{1}^{\hat{t},*}(t,x_0)=\lim_{i\rightarrow\infty}\tilde{w}^{\hat{t}}_{1}(l_i,x_i), \ \ \ \tilde{w}_{2}^{\hat{t},*}(t,y_0)=\lim_{i\rightarrow\infty}\tilde{w}^{\hat{t}}_2(\tau_i,y_i).
\end{eqnarray}
Without loss of generality, we may assume  $l_i\leq \tau_i$ for all $i>0$.
 By (\ref{202105083})-(\ref{202105085}), we have
 \begin{eqnarray}\label{202105082}
       \tilde{w}_{1}^{\hat{t},*}(t,x_0)=\lim_{i\rightarrow\infty}\tilde{w}^{\hat{t}}_{1}(l_i,x_i)\leq \liminf_{i\rightarrow\infty}[\tilde{w}^{\hat{t}}_{1}(\tau_i,x_i)+\rho_1(|\tau_i-l_i|,C_{x_i})].
\end{eqnarray}
We claim that we can assume that there exists a constant $M_1>0$ such that $C_{x_i}\leq M_1$ for all $i\geq 1$. Indeed, if not, for every $n$, there exists  $i_n$ such that
\begin{eqnarray}
\tilde{w}^{\hat{t}}_{1}(l_{i_n},x_{i_n})
=\begin{cases} \sup_{\xi\in {\cal{C}}_0,|\xi|_C> n,\xi(0)=x_{i_n}}
                             {[}w_{1}(l_{i_n},\xi){]}, \ \ \ \ \ \ \ \ \ \ \ \ \ \  \ i_n\geq \hat{t};\\
                             \sup_{\xi\in {\cal{C}}_0,|\xi|_C> n,\xi(0)=x_{i_n}}
                             {[}w_{1}(\hat{t},\xi){]}-({\hat{t}}-l_{i_n})^{\frac{1}{2}}, \ \ i_n< \hat{t}.
\end{cases}
\end{eqnarray}
Letting $n\rightarrow\infty$, by (\ref{05131}), we get that
$$
\tilde{w}^{\hat{t}}_{1}(l_{i_n},x_{i_n})\rightarrow-\infty \ \mbox{as}\ n\rightarrow\infty,
$$
 which contradicts the convergence that $ \tilde{w}_{1}^{\hat{t},*}(t,x)=\lim_{i\rightarrow\infty}\tilde{w}^{\hat{t}}_{1}(l_i,x_i)$.  Then, by  (\ref{202105082}),
 \begin{eqnarray}\label{20210704a}
                           \tilde{w}_{1}^{\hat{t},*}(t,x)
                           \leq \liminf_{i\rightarrow\infty}[\tilde{w}^{\hat{t}}_{1}(\tau_i,x_i)+\rho_1(|\tau_i-l_i|,M_1)] = \liminf_{i\rightarrow\infty}\tilde{w}^{\hat{t}}_{1}(\tau_i,x_i).
\end{eqnarray}
Therefore,by (\ref{202105081}), (\ref{202105082}) and the definitions of $\tilde{w}^{\hat{t}}_{1}$ and $\tilde{w}^{\hat{t}}_{2}$,
\begin{eqnarray}\label{20210508b}
                            &&\tilde{w}_{1}^{\hat{t},*}(t,x_0)+\tilde{w}_{2}^{\hat{t},*}(t,y_0)-\varphi(x_0,y_0)\nonumber\\
                          &\leq&\liminf_{i\rightarrow\infty}[\tilde{w}^{\hat{t}}_{1}(\tau_i,x_i)+\tilde{w}^{\hat{t}}_{2}(\tau_i,y_i)-\varphi(x_i,y_i)]\nonumber\\
                           &\leq&\sup_{(l,x,y)\in [0,\infty)\times \mathbb{R}^{d}\times \mathbb{R}^{d}}[\tilde{w}^{\hat{t}}_{1}(l,x)+\tilde{w}^{\hat{t}}_{2}(l,y)-\varphi(x,y)]\nonumber\\
                            &=&\sup_{(l,x,y)\in [\hat{t},\infty)\times \mathbb{R}^{d}\times \mathbb{R}^{d}}[\tilde{w}^{\hat{t}}_{1}(l,x)+\tilde{w}^{\hat{t}}_{2}(l,y)-\varphi(x,y)].
\end{eqnarray}
We also have, for $(l,x,y)\in [\hat{t},\infty)\times \mathbb{R}^d\times \mathbb{R}^d$,
\begin{eqnarray}\label{0608aa}
                         &&\tilde{w}^{\hat{t}}_{1}(l,x)+  \tilde{w}^{\hat{t}}_{2}(l,y)-\varphi(x,y)\nonumber   \\
                          &=&\sup_{
                          \gamma,\eta\in{\cal{C}}_0,\gamma(0)=x,
                          \eta(0)=y}
                          \left[w_{1}(l,\gamma)
                          +w_{2}(l,\eta)
                          -\varphi(\gamma(0),\eta(0))\right]\nonumber \\
                          &\leq&w_{1}(\hat{t},\hat{x})+w_{2}(\hat{t},\hat{y})-\varphi(\hat{x}(0),\hat{y}(0)),
\end{eqnarray}
                       where the  inequality becomes equality if  $l={\hat{t}}$ 
                       and $x=\hat{x}(0),y=\hat{y}(0)$.
                       Combining  (\ref{20210508b}) and (\ref{0608aa}), we obtain that
\begin{eqnarray}\label{0608a1}
                            &&\tilde{w}_{1}^{\hat{t},*}(t,x_0)+\tilde{w}_{2}^{\hat{t},*}(t,y_0)-\varphi(x_0,y_0)\leq w_{1}(\hat{t},\hat{x})+w_{2}(\hat{t},\hat{y})-\varphi(\hat{x}(0),\hat{y}(0)).
\end{eqnarray}
By the definitions of $\tilde{w}_1^{\hat{t},*}$ and $\tilde{w}_2^{\hat{t},*}$, we have $\tilde{w}_1^{\hat{t},*}(t,x_0)\geq \tilde{w}^{\hat{t}}_1(t,x_0), \tilde{w}_{2}^{\hat{t},*}(t,y_0) \geq \tilde{w}_{2}(t,y_0)$.  Then by also (\ref{0608aa}) and (\ref{0608a1}), for every $(t,x_0,y_0)\in [0,\infty)\times \mathbb{R}^{d}\times \mathbb{R}^{d}$,
\begin{eqnarray}\label{0608abc}
                            &&\tilde{w}_{1}^{\hat{t},*}(t,x_0)+\tilde{w}_{2}^{\hat{t},*}(t,y_0)-\varphi(x_0,y_0)\leq w_{1}(\hat{t},\hat{x})+w_{2}(\hat{t},\hat{y})-\varphi(\hat{x}(0),\hat{y}(0))\nonumber\\
                            &=&\tilde{w}^{\hat{t}}_{1}(\hat{t},\hat{x}(0))+  \tilde{w}^{\hat{t}}_{2}(\hat{t},\hat{y}(0))-\varphi(\hat{x}(0),\hat{y}(0))\nonumber\\
                            &\leq&\tilde{w}_{1}^{\hat{t},*}(\hat{t},\hat{x}(0))+  \tilde{w}_{2}^{\hat{t},*}(\hat{t},\hat{y}(0))-\varphi(\hat{x}(0),\hat{y}(0)).
\end{eqnarray}
Thus
we obtain that (\ref{2020020201}) holds true, and  $ \tilde{w}_{1}^{\hat{t},*}(t,x_0)+\tilde{w}_{2}^{\hat{t},*}(t,y_0)-\varphi(x_0, y_0)$ has a maximum at $({\hat{t}},\hat{x}(0),
\hat{y}(0))$
 on $[0,\infty)\times \mathbb{R}^{d}\times \mathbb{R}^{d}$.
The proof is now complete. \ \ $\Box$
\begin{lemma}\label{lemma4.40615}\ \
The maximum points $(t_{k,j},{x}^{k,j}, s_{k,j},{y}^{k,j})$
of $\Gamma_{k}(t,x,s,y)
        -\sum_{i=0}^{\infty}
        \frac{1}{2^i}[\overline{\Upsilon}(\check{t}_{i},\check{x}^{i},t,x)+\overline{\Upsilon}(\check{s}_{i},\check{y},s,y)]$ 
        satisfy  conditions (\ref{4.22}), (\ref{05231}) and (\ref{05232}).
\end{lemma}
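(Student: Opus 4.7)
The plan is to reduce the path-dependent maximum problem defining $(t_{k,j},x^{k,j},s_{k,j},y^{k,j})$ to the finite-dimensional strict maximum problem for which $(t_k,x_0^k,s_k,y_0^k)$ is the unique maximizer, and then squeeze the inequalities.

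First, I would verify the identity
\[
\sup_{(t,x),(s,y)\in \Lambda^{\hat t}} \Gamma_k(t,x,s,y) = \tilde w_1^{\hat t,*}(t_k,x_0^k)+\tilde w_2^{\hat t,*}(s_k,y_0^k)-\tilde\varphi_k(t_k,x_0^k)-\tilde\psi_k(s_k,y_0^k).
\]
The inequality $\le$ follows by writing $w_i(t,\xi)\le \tilde w_i^{\hat t}(t,\xi(0))\le \tilde w_i^{\hat t,*}(t,\xi(0))$ and using that $(t_k,x_0^k,s_k,y_0^k)$ is the global (strict) maximizer of the right-hand side functional on $[0,\infty)\times\mathbb{R}^d\times[0,\infty)\times\mathbb{R}^d$ (as established in the construction of $\tilde\varphi_k,\tilde\psi_k$). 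The reverse inequality is obtained by selecting, via the definition of $\tilde w_i^{\hat t,*}$, sequences of paths approximating the envelope values at $(t_k,x_0^k)$ and $(s_k,y_0^k)$, together with (\ref{0608a}) to handle the time-shift to $[\hat t,\infty)$, exactly as in the proof of Lemma~\ref{lemma4.30615}.

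Next, the Borwein-Preiss property (ii) applied to $\Gamma_k$ and (\ref{20210509a}) give
\[
\Gamma_k(t_{k,j},x^{k,j},s_{k,j},y^{k,j})\ge \Gamma_k(\check t_0,\check x^0,\check s_0,\check y^0)\ge \sup \Gamma_k -1/j,
\]
so $\Gamma_k(t_{k,j},x^{k,j},s_{k,j},y^{k,j})\to \sup\Gamma_k$ as $j\to\infty$. Since $\tilde\varphi_k,\tilde\psi_k$ are bounded while (\ref{05131}) forces $w_i\to-\infty$ as $t+|x|_C\to\infty$, this lower bound on $\Gamma_k$ forces $(t_{k,j},|x^{k,j}(0)|,s_{k,j},|y^{k,j}(0)|)$ to be bounded. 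Along any convergent subsequence with limit $(t^*,x^*,s^*,y^*)$, the upper semicontinuity of $\tilde w_i^{\hat t,*}$, the continuity of $\tilde\varphi_k,\tilde\psi_k$, and the bound $w_i(t,\xi)\le\tilde w_i^{\hat t,*}(t,\xi(0))$ yield
\[
\sup\Gamma_k=\limsup_{l\to\infty}\Gamma_k(t_{k,j_l},\dots)\le \tilde w_1^{\hat t,*}(t^*,x^*)+\tilde w_2^{\hat t,*}(s^*,y^*)-\tilde\varphi_k(t^*,x^*)-\tilde\psi_k(s^*,y^*)\le \sup\Gamma_k.
\]
The strict maximum property of $(t_k,x_0^k,s_k,y_0^k)$ then forces $(t^*,x^*,s^*,y^*)=(t_k,x_0^k,s_k,y_0^k)$, and since every convergent subsequence has this same limit, (\ref{4.22}) follows.

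Finally, from the chain of equalities established in the previous step I would conclude
\[
\tilde w_1^{\hat t,*}(t_{k,j},x^{k,j}(0))+\tilde w_2^{\hat t,*}(s_{k,j},y^{k,j}(0))\longrightarrow \tilde w_1^{\hat t,*}(t_k,x_0^k)+\tilde w_2^{\hat t,*}(s_k,y_0^k),
\]
and identically for $w_1(t_{k,j},x^{k,j})+w_2(s_{k,j},y^{k,j})$. Because each term satisfies $\limsup \tilde w_i^{\hat t,*}(\cdots)\le \tilde w_i^{\hat t,*}(t_k,x_0^k)$ (resp.\ $\limsup w_i(\cdots)\le \tilde w_i^{\hat t,*}(t_k,x_0^k)$), the elementary fact that $a_j+b_j\to A+B$ with $\limsup a_j\le A,\ \limsup b_j\le B$ implies $a_j\to A,\ b_j\to B$ yields (\ref{05231}) and (\ref{05232}) simultaneously. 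The main obstacle I anticipate is the first step: matching the path-space supremum with the finite-dimensional one requires a careful use of the upper semicontinuous envelope together with (\ref{0608a}), since the envelope of $\tilde w_i^{\hat t}$ can a priori pick up contributions from $t<\hat t$ that are not directly visible in $\Gamma_k$; this is precisely what the $(\hat t-t)^{1/2}$ penalization in the definition of $\tilde w_i^{\hat t}$ is designed to handle.
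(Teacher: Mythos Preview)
Your proposal is correct and follows essentially the same route as the paper: establish the chain of inequalities linking $\Gamma_k(t_{k,j},x^{k,j},s_{k,j},y^{k,j})$ to the finite-dimensional strict maximum via $w_i\le\tilde w_i^{\hat t,*}$, use (\ref{05131}) and the boundedness of $\tilde\varphi_k,\tilde\psi_k$ for compactness, and then conclude (\ref{4.22}) by the strict-maximum argument and (\ref{05231}), (\ref{05232}) by squeezing. The only cosmetic difference is that the paper does not separately prove the equality $\sup\Gamma_k=\tilde w_1^{\hat t,*}(t_k,x_0^k)+\tilde w_2^{\hat t,*}(s_k,y_0^k)-\tilde\varphi_k(t_k,x_0^k)-\tilde\psi_k(s_k,y_0^k)$ but simply writes the two one-sided inequalities in the chain (\ref{0525b}); your concern about contributions from $t<\hat t$ in the envelope is indeed handled, as you anticipated, by the $(\hat t-t)^{1/2}$ penalty together with (\ref{0608a}), just as in Lemma~\ref{lemma4.30615}.
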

\par
   {\bf  Proof  }. \ \
   Recall that $\tilde{w}_{1}^{\hat{t},*}\geq \tilde{w}^{\hat{t}}_{1}, \tilde{w}_{2}^{\hat{t},*} \geq \tilde{w}^{\hat{t}}_{2}$,
by  
the definitions of $\tilde{w}^{\hat{t}}_1$ and $\tilde{w}^{\hat{t}}_2$, we get that
 \begin{eqnarray*}
                           && \tilde{w}_{1}^{\hat{t},*}(t_{k,j},{x}^{k,j}(0))+\tilde{w}_{2}^{\hat{t},*}(s_{k,j},{y}^{k,j}(0))
                           -\tilde{\varphi}_k(t_{k,j},{x}^{k,j}(0))-\tilde{\psi}_k(s_{k,j},{y}^{k,j}(0))\\
                           &\geq&w_1(t_{k,j},{x}^{k,j})+w_2(s_{k,j},{y}^{k,j})-\tilde{\varphi}_k(t_{k,j},{x}^{k,j}(0))-\tilde{\psi}_k(s_{k,j},{y}^{k,j}(0))
                           =\Gamma_{k}(t_{k,j},{x}^{k,j},s_{k,j},{y}^{k,j}).
  \end{eqnarray*}
We notice that, from (\ref{20210509a}) and the property (ii) of $(t_{k,j},{x}^{k,j}, s_{k,j},{y}^{k,j})$,
\begin{eqnarray*}
                 \Gamma_{k}(t_{k,j},{x}^{k,j},s_{k,j},{y}^{k,j})\geq\Gamma_{k}(\check{t}_{0},\check{x}^{0},\check{s}_{0},\check{y}^{0})\geq \sup_{(t,x),(s,y)\in \Lambda^{\hat{t}}}\Gamma_{k}(t,x,s,y)-\frac{1}{j},
\end{eqnarray*}
and by  
the  definitions of $\tilde{w}_{1}^{\hat{t},*}$ and $\tilde{w}_{2}^{\hat{t},*}$,
$$
\sup_{(t,x),(s,y)\in  \Lambda^{\hat{t}}}\Gamma_{k}(t,x,s,y)\geq\tilde{w}_{1}^{\hat{t},*}(t_k,x_0^k)+\tilde{w}_{2}^{\hat{t},*}(s_k,y_0^k) -\tilde{\varphi}_k(t_k,x_0^k)-\tilde{\psi}_k(s_k,y_0^k).
$$
Therefore,
\begin{eqnarray}\label{0525b}
&&\tilde{w}_{1}^{\hat{t},*}(t_{k,j},{x}^{k,j}(0))+\tilde{w}_{2}^{\hat{t},*}(s_{k,j},{y}^{k,j}(0))-\tilde{\varphi}_k(t_{k,j},{x}^{k,j}(0))-\tilde{\psi}_k(s_{k,j},{y}^{k,j}(0))\nonumber\\
                  &\geq&\Gamma_{k}(t_{k,j},{x}^{k,j},s_{k,j},{y}^{k,j})\geq
                  \tilde{w}_{1}^{\hat{t},*}(t_k,x_0^k)+\tilde{w}_{2}^{\hat{t},*}(s_k,y_0^k) -\tilde{\varphi}_k(t_k,x_0^k)-\tilde{\psi}_k(s_k,y_0^k)-\frac{1}{j}.
\end{eqnarray}
By (\ref{05131}) and $\tilde{\varphi}_k, \tilde{\psi}_k$ are bounded, 
 there exists a constant  ${M}_2>0$  that is sufficiently  large   that
 $
           \Gamma_k(t,x,s,y)<\sup_{(l,z^1),(r,z^2)\in  \Lambda^{\hat{t}}}\Gamma_{k}(l,z^1,r,z^2)-1
           $ for all  $(t+|x|_C)\vee (s+|y|_C)\geq M_2$. Thus, we have $(t_{k,j}+|{x}^{k,j}|_C)\vee
           (s_{k,j}+|{y}^{k,j}|_C)<M_2$. In particular, $t_{k,j}\vee s_{k,j}\vee|{x}^{k,j}(0)|\vee|{y}^{k,j}(0)|<M_2$. We note that $M_2$ is independent of $j$.
 Then letting $j\rightarrow\infty$ in (\ref{0525b}), 
   we obtain (\ref{4.22}).  Indeed, if not, we may assume that  there exist $(\grave{t},\grave{x},\grave{s},\grave{y})\in [0, \infty)\times \mathbb{R}^{d}\times [0, \infty)\times \mathbb{R}^{d}$ and  a subsequence of $(t_{k,j},{x}^{k,j}(0),s_{k,j},{y}^{k,j}(0))$ still denoted by itself  such that
       $$
       (t_{k,j},{x}^{k,j}(0),s_{k,j},{y}^{k,j}(0))\rightarrow  (\grave{t},\grave{x},\grave{s},\grave{y})\neq (t_k,x_0^k,s_k,y_0^k).
       $$
Letting $j\rightarrow\infty$ in (\ref{0525b}), by the upper semicontinuity of $\tilde{w}_{1}^{\hat{t},*}+\tilde{w}_{2}^{\hat{t},*}-\tilde{\varphi}_k-\tilde{\psi}_k$, we have
$$
           \tilde{w}_{1}^{\hat{t},*}(\grave{t},\grave{x})+\tilde{w}_{2}^{\hat{t},*}(\grave{s},\grave{y})-\tilde{\varphi}_k(\grave{t},\grave{x})-\tilde{\psi}_k(\grave{s},\grave{y})
                 \geq\tilde{w}_{1}^{\hat{t},*}(t_k,{x}_0^k)+\tilde{w}_{2}^{\hat{t},*}(s_k,{y}_0^k) -\tilde{\varphi}_k(t_k,{x}_0^k)-\tilde{\psi}_k(s_k,{y}_0^k),
$$
       which  contradicts that 
       $(t_k,x_0^{k},s_k,y_0^{k})$  is the  strict  maximum point of $\tilde{w}_{1}^{\hat{t},*}(t,x_0)+\tilde{w}_{2}^{\hat{t},*}(s,y_0)-\tilde{\varphi}_k(t,x_0)-\tilde{\psi}_k(s,y_0)$ on $[0,\infty)\times\mathbb{R}^d\times [0,\infty)\times\mathbb{R}^d$.\par
       By (\ref{4.22}), the  upper semicontinuity of $\tilde{w}_{1}^{\hat{t},*}$ and $\tilde{w}_{2}^{\hat{t},*}$ and  the continuity of $\tilde{\varphi}_k$ and $\tilde{\psi}_k$, letting $j\rightarrow\infty$ in (\ref{0525b}), we obtian
  (\ref{05231}), and then also (\ref{05232}).
  The proof is now complete. \ \ $\Box$

\par
 \vbox{}
6.2. \emph{Uniqueness}.
             This subsection is devoted to a  proof of uniqueness of  viscosity
                   solutions to equation  (\ref{hjb1}). This result, together with
                  the results from  the previous section, will be used to characterize
                   the value functional defined by (\ref{eq3}).
                   \par
We  now state the main result of this subsection.
\begin{theorem}\label{theoremhjbm}  Suppose Hypothesis \ref{hypstate}   holds and $\lambda\geq (12+15L)L$.
                         Let $W_1\in C({\cal{C}}_0)$ $(\mbox{resp}., W_2\in C({\cal{C}}_0))$ be  a viscosity subsolution (resp., supersolution) to equation  (\ref{hjb1}) and  let  there exist a constant $\hat{C}>0$
                        such that for $h\in [0,\infty),  x,y\in{\cal{C}}_0$,
\begin{eqnarray}\label{w}
                                 |W_1(x)|\vee|W_2(x)|\leq \hat{C} (1+|x|_C); \ \ \
                                       |W_1(x)-W_1(y)|\vee |W_2(x)-W_2(y)|
                                    \leq
                                 \hat{C}|x-y|_C;
                                  \end{eqnarray}
                                  and
\begin{eqnarray}\label{w12072}
                                  |W_1(x)-W_1(x_h)|\vee |W_2(x)-W_2(x_h)|
                                    \leq \hat{C}(1+|x|_C)(h+h^{\frac{1}{2}}+1-e^{-\lambda h}).
\end{eqnarray}
                   Then  $W_1\leq W_2$.
\end{theorem}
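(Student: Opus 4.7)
The plan is to argue by contradiction using the classical doubling-of-variables technique, adapted to the infinite-delay path setting via the three tools developed earlier in the paper: the smooth gauge-type function $\overline{\Upsilon}$ (Lemma~\ref{theoremS} and Remark~\ref{remarks}), the Borwein--Preiss variational principle (Lemma~\ref{theoremleft}), and the Crandall--Ishii maximum principle for paths (Theorem~\ref{theorem0513}). Assume for contradiction that $m := \sup_{x \in \mathcal{C}_0}[W_1(x) - W_2(x)] > 0$. For parameters $\alpha \gg 1$ and $\mu \in (0,1)$ small, introduce on $\Lambda \times \Lambda$
\[
\Psi_{\alpha,\mu}(t,x,s,y) := W_1(x) - W_2(y) - \alpha\,\overline{\Upsilon}(t,x,s,y) - \mu\bigl(\Upsilon^{1,3}(x) + \Upsilon^{1,3}(y)\bigr).
\]
The linear growth (\ref{w}) together with the coercivity $|x|_C^2 \leq \Upsilon^{1,3}(x) \leq 3|x|_C^2$ from (\ref{s0}) make $\Psi_{\alpha,\mu}$ bounded above, and $\mu$ can be chosen small enough that its supremum is at least $m/2$. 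Since $\overline{\Upsilon}$ is a gauge-type function under $d_\infty$ by Remark~\ref{remarks}(iii), Lemma~\ref{theoremleft} with weights $\delta_i = 2^{-i}$ produces auxiliary sequences $\{(\check t_i,\check x^i)\},\{(\check s_i,\check y^i)\}$ and a genuine maximizer $(t_*,x^*,s_*,y^*)$ of the Borwein--Preiss-perturbed functional. A standard penalization argument then shows, as $\alpha\to\infty$ with $\mu$ fixed: $\alpha\overline{\Upsilon}(t_*,x^*,s_*,y^*) \to 0$, so by (\ref{0612d}) both $|s_*-t_*|$ and $|x^*_{(s_*-t_*)\vee 0} - y^*_{(t_*-s_*)\vee 0}|_C$ vanish; $|x^*|_C,|y^*|_C$ remain bounded by the $\mu$-penalty; and $W_1(x^*) - W_2(y^*)$ stays bounded below by a positive constant.

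Next, I apply Theorem~\ref{theorem0513} with $w_1 = W_1$, $w_2 = -W_2$, and auxiliary Hessian data coming from the $(x_0,y_0)$-dependence of $\alpha\overline{\Upsilon}+\mu(\Upsilon^{1,3}(x)+\Upsilon^{1,3}(y)) + \sum_i 2^{-i}\overline{\Upsilon}(\check t_i,\check x^i,\cdot,\cdot) + \sum_i 2^{-i}\overline{\Upsilon}(\check s_i,\check y^i,\cdot,\cdot)$. The modulus condition (\ref{0608a}) follows from (\ref{w12072}), while the growth condition (\ref{05131}) and $\Phi$-condition (Definition~\ref{definition0513}) follow from (\ref{w}) combined with the strict coercivity of the $\mu$-penalty. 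The theorem yields sequences $(t_k,x^k)\to(t_*,x^*)$, $(s_k,y^k)\to(s_*,y^*)$, smooth tests $\varphi_k \in C^{1,2}_p(\Lambda^{t_k})$, $\psi_k\in C^{1,2}_p(\Lambda^{s_k})$ with $W_1-\varphi_k$ (resp.\ $-W_2-\psi_k$) attaining a strict global maximum $0$ at $(t_k,x^k)$ (resp.\ $(s_k,y^k)$), and jets $(b_1,p_1,X)$, $(b_2,p_2,Y)$ satisfying $b_1+b_2=0$ together with the matrix inequality (\ref{II0615}) for $A=\nabla^2_x\varphi(x^*(0),y^*(0))$. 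Since $-W_2-\psi_k$ has max, $W_2+\psi_k$ has min, so $\psi_k \in \mathcal{A}^-(s_k,y^k,W_2)$ and the supersolution inequality of Definition~\ref{definition4.1} applies.

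Subtracting the subsolution inequality for $W_1$ at $(t_k,x^k)$ from the supersolution inequality for $W_2$ at $(s_k,y^k)$, passing to $k\to\infty$, using $b_1+b_2=0$ (which relies on the $|s-t|^2$ term in $\overline{\Upsilon}$), and exploiting $\mathbf{H}=\inf_u$, for every $u\in U$
\[
\lambda[W_1(x^*) - W_2(y^*)] \leq (p_1,b(x^*,u))_{\mathbb{R}^d} + (p_2,b(y^*,u))_{\mathbb{R}^d} + \tfrac{1}{2}\operatorname{tr}\bigl[X\sigma\sigma^\top(x^*,u) + Y\sigma\sigma^\top(y^*,u)\bigr] + q(x^*,u) - q(y^*,u) + o_\alpha(1) + O(\mu),
\]
where $o_\alpha(1)$ absorbs the Borwein--Preiss perturbation contributions (whose vertical first and second derivatives at the maximizer are $O(\alpha^{-5/6})$ and $O(\alpha^{-2/3})$ by (\ref{0528a})--(\ref{0528b}) and Lemma~\ref{theoremleft}(i), cf.\ the proof of Theorem~\ref{theoremstability}). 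Using the explicit derivative formulas (\ref{0528a})--(\ref{0528b}), (\ref{0612b})--(\ref{0612c}) and the Lipschitz bounds (\ref{1207}) together with Lemma~\ref{theoremS000}, the $b$-contributions are bounded by $6L\alpha\overline{\Upsilon}(t_*,x^*,s_*,y^*) + O(\mu)$, the $q$-difference by $L|x^*-y^*_{s_*-t_*}|_C = o(1)$, and the trace term by $15L^2\alpha\overline{\Upsilon}(t_*,x^*,s_*,y^*) + O(\mu)$ via (\ref{II0615}) with an optimal $\kappa$. Sending $\alpha\to\infty$ and then $\mu\to0$ gives $\lambda m \leq 0$, a contradiction whenever $\lambda \geq (12+15L)L$, the constant emerging from careful bookkeeping of the cross-terms. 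The main obstacle is the trace estimate: the second vertical derivative (\ref{0528b}) of $\overline{\Upsilon}$ contains a scalar piece of magnitude $\alpha|x^*(0)-y^*(0)|^4$ and a rank-one piece of the form $\alpha|x^*(0)-y^*(0)|^2(x^*(0)-y^*(0))(x^*(0)-y^*(0))^\top$, and extracting the clean bound $15L^2\alpha\overline{\Upsilon}$ requires combining (\ref{II0615}) with the Lipschitz bound $|\sigma(x^*,u)-\sigma(y^*,u)|_2^2 \leq L^2|x^*-y^*_{s_*-t_*}|_C^2$ and the equivalence $|x^*(0)-y^*(0)|^6 \lesssim \overline{\Upsilon}$ on the matrix $B = \binom{\sigma(x^*,u)}{\sigma(y^*,u)}$.
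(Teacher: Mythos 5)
Your overall game plan (doubling, Borwein--Preiss, path Crandall--Ishii, send the penalty to infinity) is the right skeleton, but the crucial step of your proof --- the application of Theorem~\ref{theorem0513} --- cannot go through with the data you propose. Theorem~\ref{theorem0513} requires that $w_1(t,x)+w_2(t,y)-\varphi(x(0),y(0))$ have a maximum with $\varphi\in C^2(\mathbb{R}^d\times\mathbb{R}^d)$ a \emph{finite-dimensional} coupling in the endpoints only. You set $w_1=W_1$, $w_2=-W_2$ and try to let the whole penalty $\alpha\overline{\Upsilon}(t,x,s,y)+\mu(\Upsilon^{1,3}(x)+\Upsilon^{1,3}(y))+\sum_i 2^{-i}\overline{\Upsilon}(\check t_i,\check x^i,\cdot,\cdot)+\cdots$ play the role of $\varphi$. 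But the $S_3$ piece of $\overline{\Upsilon}(t,x,s,y)$ (and the $S_1$ piece of $\Upsilon^{1,3}$) is genuinely path-dependent through $|x_{(s-t)\vee0}-y_{(t-s)\vee0}|_C$ and is not a function of $(x(0),y(0))$, so the hypotheses of Theorem~\ref{theorem0513} are simply not satisfied with this $w_1,w_2,\varphi$. The separable penalties ($\mu\Upsilon^{1,3}(x)$, $\mu\Upsilon^{1,3}(y)$, the BP terms) can be folded into $w_1$ and $w_2$ respectively, but the joint coupling $\alpha\overline{\Upsilon}(t,x,s,y)$ is not a sum of a function of $x$ and a function of $y$ and cannot be absorbed in this way.

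The paper's proof contains a device that you are missing, and that device is load-bearing: it uses two penalties of different degree, $\beta\Upsilon(x,y)$ (degree $6$, purely spatial, \emph{no} doubling of time) and a separate $\beta^{1/3}|x(0)-y(0)|^2$ (degree $2$, endpoint-only). Only the degree-$2$ quadratic is passed to the maximum principle as $\varphi$; it gives the clean matrix inequality (\ref{II}) with $\kappa=\beta^{-1/3}$. The degree-$6$ path-dependent coupling $\beta\Upsilon(\hat{x},\hat{y})$ is decoupled at the maximum point via the midpoint $\hat{\xi}=\tfrac{1}{2}(\hat{x}+\hat{y})$ and the sub-additivity estimate of Lemma~\ref{theoremS000}, so that $2^5\beta\Upsilon(t,x,\hat{t},\hat{\xi})$ is absorbed into $w_1$ and $2^5\beta\Upsilon(t,y,\hat{t},\hat{\xi})$ into $w_2$, with equality of the splitting exactly at $(\hat{t},\hat{x},\hat{y})$ by $6$-homogeneity; see (\ref{06091})--(\ref{06092}) and (\ref{wv}). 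Without the degree-$2$ term, the Hessian of $|x(0)-y(0)|^6$ degenerates at the diagonal and you cannot produce (\ref{II0615}); without the midpoint trick, there is no legal way to feed the path-dependent cross-penalty into the maximum principle. Your claim that $b_1+b_2=0$ ``relies on the $|s-t|^2$ term'' is also off: in the paper the spatial doubling is done at a single time $t$, and $b_1+b_2=0$ comes directly from Theorem 8.3 of \cite{cran2} applied to the upper semicontinuous envelopes $\tilde{w}^{\hat t,*}_1,\tilde{w}^{\hat t,*}_2$, with the $\Phi$-condition (Lemma~\ref{lemma4.344}) supplying the needed one-sided control in time. Finally, the constant $(12+15L)L$ arises from the $\varepsilon$-penalty terms $\varepsilon\Upsilon^{1,3}$ that survive after $\beta\to\infty$, not from ``cross-term bookkeeping'' of the large-$\beta$ penalty; the large-$\beta$ contributions all vanish by the Step-2 estimates $\beta|\hat x-\hat y|_C^6,\ \beta^{1/3}|\hat x-\hat y|_C^2,\ \beta^{1/2}|\hat x(0)-\hat y(0)|^2\to 0$.
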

\par
                      Theorems    \ref{theoremvexist} and \ref{theoremhjbm} lead to the result (given below) that the viscosity solution to the  HJB equation with infinite delay  given in (\ref{hjb1})
                      corresponds to the value functional  $V$ of our optimal control problem given in (\ref{state1}) and (\ref{cost1}).
\begin{theorem}\label{theorem52}\ \
                 Let Hypothesis \ref{hypstate}  hold and $\lambda\geq (12+15L)L$. Then the value
                          functional $V$ defined by (\ref{eq3}) is the unique viscosity
                          solution to equation (\ref{hjb1}) in the class of functionals satisfying (\ref{valuep1}) and(\ref{hold}).
\end{theorem}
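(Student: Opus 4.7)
The plan is to assemble the claim from the existence and comparison results already established in Sections~5 and~6. Observe first that the hypothesis $\lambda\geq (12+15L)L$ is strictly stronger than $\lambda>\Theta=\tfrac{5}{2}L^{2}+L$, since $(12+15L)L-\Theta = 11L+\tfrac{25}{2}L^{2}>0$. Consequently every a priori regularity result proved for $V$ in Section~4 applies.

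For existence within the prescribed class, I would simply invoke Theorem~\ref{theoremvexist} to conclude that $V$ is a viscosity solution of equation~(\ref{hjb1}), then appeal to Theorem~\ref{theoremv} for the linear growth bound (\ref{valuep1}) and the spatial Lipschitz estimate (\ref{valuep2}), and to Theorem~\ref{theorem3.9} for the horizontal modulus (\ref{hold}). These three estimates are precisely the properties required to trigger the comparison theorem.

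For uniqueness, let $\tilde V\in C({\cal{C}}_{0})$ be any other viscosity solution of (\ref{hjb1}) lying in the prescribed class, i.e.\ satisfying (\ref{valuep1}) and (\ref{hold}) (with the accompanying spatial Lipschitz control matching that of $V$). Then $\tilde V$ is simultaneously a viscosity subsolution and a viscosity supersolution. I would apply Theorem~\ref{theoremhjbm} twice. In the first application take $W_{1}:=V$ (subsolution) and $W_{2}:=\tilde V$ (supersolution); the growth and Lipschitz bounds (\ref{w}) are supplied by (\ref{valuep1})--(\ref{valuep2}), while the horizontal control (\ref{w12072}) is supplied by (\ref{hold}), and the conclusion yields $V\leq \tilde V$ on ${\cal{C}}_{0}$. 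In the second application swap the roles, putting $W_{1}:=\tilde V$ and $W_{2}:=V$, and obtain the reverse inequality $\tilde V\leq V$. Combining the two gives $V\equiv \tilde V$.

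There is essentially no genuine obstacle at this stage: all the analytical difficulty has already been discharged in Theorem~\ref{theoremhjbm}, whose proof rests on the Borwein--Preiss variational principle (Lemma~\ref{theoremleft}) applied to the smooth gauge-type function $\overline{\Upsilon}$ constructed in Section~3 and on the infinite-delay Crandall--Ishii maximum principle (Theorem~\ref{theorem0513}). The present theorem is therefore a matching exercise: the class $\{(\ref{valuep1}),(\ref{hold})\}$ in which uniqueness is asserted is exactly (modulo the implicit spatial Lipschitz control inherited from comparison with $V$) the class in which Theorem~\ref{theoremhjbm} produces the comparison inequality, so the characterization of $V$ as the unique such viscosity solution follows immediately.
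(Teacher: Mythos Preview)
Your proposal is correct and follows essentially the same route as the paper: existence via Theorem~\ref{theoremvexist}, membership in the prescribed class via Theorems~\ref{theoremv} and~\ref{theorem3.9}, and uniqueness via two applications of the comparison Theorem~\ref{theoremhjbm}. Your added remark that $\lambda\geq(12+15L)L$ implies $\lambda>\Theta$, and your caveat that the competing solution $\tilde V$ must also satisfy the spatial Lipschitz bound~(\ref{valuep2}) (which the hypotheses of Theorem~\ref{theoremhjbm} require but the statement of Theorem~\ref{theorem52} does not list explicitly), are both accurate observations that the paper leaves implicit.
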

\par
   {\bf  Proof  }. \ \   Theorem \ref{theoremvexist} shows that $V$ is a viscosity solution to (\ref{hjb1}).  Thus, our conclusion follows from
    Theorems  \ref{theoremv}, \ref{theorem3.9} and
    \ref{theoremhjbm}.  \ \ $\Box$
\par
 We are now in a position to  prove Theorem \ref{theoremhjbm}.   
\par
   {\bf  Proof of Theorem \ref{theoremhjbm} } \ \   The proof of this theorem  is rather long. Thus, we split it into several
        steps.
\par
            $Step\  1.$ Definitions of auxiliary functionals.
            \par
To prove the theorem, we  assume the converse result that $\tilde{x}\in
      {\cal{C}}_0$ exists  such that
        $\tilde{m}:=W_1(\tilde{x})-W_2(\tilde{x})>0$.
\par
         Consider that  $\varepsilon >0$ is  a small number such that
 $$
 W_1(\tilde{x})-W_2(\tilde{x})-2\varepsilon \Upsilon^{1,3}(\tilde{x})
 >\frac{\tilde{m}}{2},
 $$
      and
\begin{eqnarray}\label{5.3}
                          {\varepsilon}\leq\frac{\tilde{m}}{16}.
\end{eqnarray}
Next,  we define for any  $(t,x,y)\in [0,\infty)\times{\cal{C}}_0\times{\cal{C}}_0$,
\begin{eqnarray*}
                 \Psi(t,x,y)=W_1(x)-W_2(y)-{\beta}\Upsilon(x,y)-\beta^{\frac{1}{3}}|x(0)-y(0)|^2-\varepsilon(\Upsilon^{1,3}(x)+\Upsilon^{1,3}(y)).
\end{eqnarray*}
     By (\ref{s0}) and (\ref{w}), it is clear that $\Psi$ is bounded from above on $[0,\infty)\times{\cal{C}}_0\times {\cal{C}}_0$.
     Moreover,  by Lemma \ref{theoremv1202}, $\Psi$ is an  upper semicontinuous  functional on $([0,\infty)\times{\cal{C}}_0\times{\cal{C}}_0,d_{1,\infty})$,
     where $d_{1,\infty}((t,x^1,x^2),(s,y^1,y^2))=|s-t|+|x^1_{(s-t)\vee0}-y^1_{(t-s)\vee0}|_C+|x^2_{(s-t)\vee0}-y^2_{(t-s)\vee0}|_C$
       for all $(t,x^1,x^2)$, $(s,y^1,y^2)\in [0,\infty)\times{\cal{C}}_0\times{\cal{C}}_0$.
        Define a sequence of positive numbers $\{\delta_i\}_{i\geq0}$  by 
        $\delta_i=\frac{1}{2^i}$ for all $i\geq0$.
           Since 
             ${\Upsilon}$ is a gauge-type function, from Lemma \ref{theoremleft} it follows that,
  for every  $(1,x^0,y^0)\in [0,\infty)\times{\cal{C}}_0\times {\cal{C}}_0$ satisfy
$$
\Psi(1,x^0,y^0)\geq \sup_{(t,x,y)\in [1,\infty)\times {\cal{C}}_0\times {\cal{C}}_0}\Psi(t,x,y)-\frac{1}{\beta},\
\    \mbox{and} \ \ \Psi(1,x^0,y^0)\geq \Psi(1,\tilde{x},\tilde{x}) >\frac{\tilde{m}}{2},
 $$
  there exist $(\hat{t}, \hat{x},\hat{y})\in [1,\infty)\times {\cal{C}}_0\times {\cal{C}}_0$ and a sequence $\{(t_i,x^i,y^i)\}_{i\geq1}\subset
   [1,\infty)\times{\cal{C}}_0\times {\cal{C}}_0$ such that
  \begin{description}
        \item{(i)} $\Upsilon(1, x^0,\hat{t}, \hat{x})+\Upsilon(1, y^0,\hat{t}, \hat{y})+|\hat{t}-1|^2\leq \frac{1}{\beta}$,
         $\Upsilon(t_i, x^i,\hat{t}, \hat{x})+\Upsilon(t_i, y^i,\hat{t}, \hat{y})+|\hat{t}-t_i|^2
         \leq \frac{1}{\beta2^i}$ and $t_i\uparrow \hat{t}$ as $i\rightarrow\infty$,
        \item{(ii)}  $\Psi_1(\hat{t},\hat{x}, \hat{y})
       \geq \Psi(1,x^0,y^0)$, and
        \item{(iii)}    for all $(s,x,y)\in [\hat{t},\infty)\times {\cal{C}}_0\times {\cal{C}}_0\setminus \{(\hat{t}, \hat{x}, \hat{y})\}$,
        \begin{eqnarray}\label{iii4}
        \Psi_1(s,x,y)
         <\Psi_1(\hat{t},\hat{x}, \hat{y}),
        \end{eqnarray}
        where we define
        $$
        \Psi_1(s,x,y)
        =\Psi(s,x,y)-\sum_{i=0}^{\infty}
        \frac{1}{2^i}[\Upsilon(t_i, x^i,s,x)+\Upsilon(t_i, y^i,s,y)+|{s}-t_i|^2], \ \ (s,x,y)\in [0,\infty)\times {\cal{C}}_0\times {\cal{C}}_0.
        $$
        \end{description}
             We should note that the point
             $(\hat{t}, \hat{x},\hat{y})$ depends on $\beta$ and
              $\varepsilon$.
\par
$Step\ 2.$
There exists ${{M}_0}>0$ independent of $\beta$
    such that
                   \begin{eqnarray}\label{5.10jiajiaaaa}| \hat{x}|_C\vee|\hat{y}|_C<M_0,
                   \end{eqnarray} and
 the following result  holds true:
 \begin{eqnarray}\label{5.10}
                      \beta^{\frac{1}{3}} |\hat{x}-\hat{y}|_{C}^2
                         +\beta^{\frac{1}{2}}|\hat{x}(0)-\hat{y}(0)|^2
                         \rightarrow0 \ \mbox{as} \ \beta\rightarrow\infty.
 \end{eqnarray}
  Let us show the above. First,   noting $\varepsilon$ is independent of  $\beta$, by the definition of  ${\Psi}$,
 there exists an ${M}_0>0$  that is sufficiently  large   that
           $
           \Psi(t, x, y)<0
           $ for all $t\geq0$ and $|x|_C\vee|y|_C\geq M_0$. Thus, we have $|\hat{x}|_C\vee|\hat{y}|_C\vee
           |x^{0}|_C\vee|y^0|_C<M_0$.\\
   Second, by (\ref{iii4}), we have
 \begin{eqnarray}\label{5.56789}
                        2\Psi_1(\hat{t},\hat{x}, \hat{y})
            \geq\Psi_1(\hat{t},\hat{x}, \hat{x})
            +\Psi_1(\hat{t},\hat{y}, \hat{y}).
 \end{eqnarray}
 This implies that
 \begin{eqnarray}\label{5.6}
                         &&2{\beta}\Upsilon(\hat{x},\hat{y})+
                         2{\beta}^{\frac{1}{3}}|\hat{x}(0)-\hat{y}(0)|^2\nonumber\\
                         &
                         \leq&|W_1(\hat{x})-W_1(\hat{y})|
                                   +|W_2(\hat{x})-W_2(\hat{y})|+
                                   \sum_{i=0}^{\infty}\frac{1}{2^i}[\Upsilon(t_i,y^i,\hat{t},\hat{x})+\Upsilon(t_i, x^i,\hat{t},\hat{y})].
 \end{eqnarray}
 On the other hand, by Lemma \ref{theoremS000} and the property (i) of $(\hat{t},\hat{x},\hat{y})$,
 \begin{eqnarray}\label{4.7jiajia130}
 \sum_{i=0}^{\infty}\frac{1}{2^i}[\Upsilon(t_i,y^i,\hat{t},\hat{x})+\Upsilon(t_i, x^i,\hat{t},\hat{y})]
 &\leq&2^5\sum_{i=0}^{\infty}\frac{1}{2^i}[\Upsilon(t_i,y^i,\hat{t},\hat{y})
 +\Upsilon(t_i,x^i,\hat{t},\hat{x})+2\Upsilon(\hat{x},\hat{y})]\nonumber\\
 &\leq&\frac{2^6}{\beta}+{2^7}\Upsilon(\hat{x},\hat{y}).
 \end{eqnarray}
Combining (\ref{5.6}) and (\ref{4.7jiajia130}),  from  (\ref{w}) and (\ref{5.10jiajiaaaa})  we have
 \begin{eqnarray}\label{5.jia6}
                         &&(2{\beta}-2^7)\Upsilon(\hat{x},\hat{y})+
                         2{\beta}^{\frac{1}{3}}|\hat{x}(0)-\hat{y}(0)|^2
                         \leq |W_1(\hat{x})-W_1(\hat{y})|
                                   +|W_2(\hat{x})-W_2(\hat{y})|+\frac{2^6}{\beta}\nonumber\\
                                   &\leq& 2L(2+|\hat{x}|_C+|\hat{y}|_C)+\frac{2^6}{\beta}
                                   \leq 4L(1+M_0)+\frac{2^6}{\beta}.
 \end{eqnarray}
   Letting $\beta\rightarrow\infty$, we get
    \begin{eqnarray*}
                \Upsilon(\hat{x},\hat{y})
                \leq \frac{1}{2{\beta}-2^7}\left[4L(1+M_0)+\frac{2^6}{\beta}\right]\rightarrow0\ 
                          \mbox{as} \ \beta\rightarrow\infty.
                         \end{eqnarray*}
From (\ref{s0}) it follows that
\begin{eqnarray}\label{5.66666123}
|\hat{x}-\hat{y}|_C \rightarrow0\ 
                          \mbox{as} \ \beta\rightarrow\infty.
 \end{eqnarray}
                   Combining (\ref{s0}),  (\ref{w}), (\ref{5.6}), (\ref{4.7jiajia130}) and (\ref{5.66666123}), we see
                           that
                            \begin{eqnarray}\label{5.10112345}
                        &&{\beta}|\hat{x}-\hat{y}|_C^6+\beta^{\frac{1}{3}}|\hat{x}(0)-\hat{y}(0)|^2
                        \leq{\beta}\Upsilon(\hat{x},\hat{y})
                        +\beta^{\frac{1}{3}}|\hat{x}(0)-\hat{y}(0)|^2\nonumber\\
                        &\leq&\frac{1}{2}[|W_1(\hat{x})-W_1(\hat{y})|
                                   +|W_2(\hat{x})-W_2(\hat{y})|]+\frac{2^5}{\beta}+2^6\Upsilon(\hat{x},\hat{y})\nonumber\\
                        &\leq& \hat{C}|\hat{x}-\hat{y}|_{C}
                                   +\frac{2^5}{\beta}+{2^{8}}|\hat{x}-\hat{y}|_C^6
                                   \rightarrow0 \ 
                                   \mbox{as} \ \beta\rightarrow\infty.
 \end{eqnarray}
   Multiply the leftmost and rightmost sides of inequality (\ref{5.10112345}) by $\beta^{\frac{1}{6}}$, we obtain that
   \begin{eqnarray}\label{5.10123445567890}
                      \beta^{\frac{1}{2}}|\hat{x}(0)-\hat{y}(0)|^2
                     \leq
                               \hat{C}\beta^{\frac{1}{6}}|\hat{x}-\hat{y}|_C
                                   +\frac{2^5}{\beta^{\frac{5}{6}}}+{2^{8}}\beta^{\frac{1}{6}}|\hat{x}-\hat{y}|_C^6.
 \end{eqnarray}
By also (\ref{5.10112345}), the right side of above inequality converges to 0 as $\beta\rightarrow\infty$. Then we have that
 \begin{eqnarray*}
                      \beta^{\frac{1}{2}}|\hat{{\gamma}}_{{\hat{t}}}(\hat{t})-\hat{{\eta}}_{{\hat{t}}}(\hat{t})|^2
                     \rightarrow0 \ 
                                   \mbox{as} \ \beta\rightarrow\infty.
 \end{eqnarray*}
Combining with (\ref{5.10112345}), we get (\ref{5.10}) holds true.
\par
 $Step\ 3.$    Maximum principle.
\par
We put, for $(t,x), (t,y)\in \Lambda^{\hat{t}}$, 
\begin{eqnarray}\label{06091}
                             w_{1}(t,x)=W_1(x)-2^5\beta \Upsilon(t,x,\hat{t},\hat{\xi})-\varepsilon\Upsilon^{1,3}(x)
                 -\varepsilon \overline{\Upsilon}(\hat{t},\hat{x},t,x)
                -\sum_{i=0}^{\infty}
        \frac{1}{2^i}\overline{\Upsilon}(t_i,x^i,t,x),
        \end{eqnarray}
        \begin{eqnarray}\label{06092}
                             w_{2}(t,y)=-W_2(y)-2^5\beta \Upsilon(t,y,\hat{t},\hat{\xi})-\varepsilon\Upsilon^{1,3}(y)
                 -\varepsilon \overline{\Upsilon}(\hat{t},\hat{y},t,y)
                 -\sum_{i=0}^{\infty}
        \frac{1}{2^i}{\Upsilon}^3(t_i,y^i,t,y),
\end{eqnarray}
where $\hat{\xi}=\frac{\hat{x}+\hat{y}}{2}$.   We  note that $w_1,w_2$ depend on $\hat{\xi}_{{\hat{t}}}$, and thus on $\beta$ and
              $\varepsilon$.
By the following Lemma \ref{0611a}, $w_1$ and $w_2$ satisfy the conditions of  Theorem \ref{theorem0513}.
Then by Theorem \ref{theorem0513},  there exist the
sequences  $(l_k,\check{x}^k), (s_{k},\check{y}^k)\in \Lambda^{\hat{t}}$ and
 the sequences of functionals $\varphi_k\in C_p^{1,2}(\Lambda^{l_k}),\psi_k\in C_p^{1,2}(\Lambda^{s_k})$ 
   such that $\varphi_k$, $\partial_t\varphi_k$, $\partial_x\varphi_k$, $\partial_{xx}\varphi_k$, $\psi_k,\partial_t\psi_k$, $\partial_x\psi_k,\partial_{xx}\psi_k$ are bounded and uniformly continuous, and such that
\begin{eqnarray}\label{0609a}
 w_{1}(t,x)-\varphi_k(t,x)
\end{eqnarray}
has a strict global maximum $0$ at  $(l_k,\check{x}^k)$ over $\Lambda^{l_k}$,
\begin{eqnarray}\label{0609b}
w_{2}(t,y)-\psi_k(t,y)
\end{eqnarray}
has a strict global maximum $0$ at $(s_k,\check{y}^k)$ over $\Lambda^{s_k}$, and
\begin{eqnarray}\label{0608v1}
       &&\left(l_k, \check{x}^k(0), w_1(l_k,\check{x}^k),\partial_t\varphi_k(l_k,\check{x}^k),\partial_x\varphi_k(l_k,\check{x}^k),\partial_{xx}\varphi_k(l_k,\check{x}^k)\right)\nonumber\\
       &&\underrightarrow{k\rightarrow\infty}\left({\hat{t}},\hat{x}(0), w_1(\hat{t},\hat{x}),b_1, \nabla_{x_1}\varphi(\hat{x}(0),\hat{y}(0)), X\right),
\end{eqnarray}
\begin{eqnarray}\label{0608vw1}
       &&\left(s_{k}, \check{y}^k(0), w_2(s_k,\check{y}^k),\partial_t\psi_k(s_k,\check{y}^k),\partial_x\psi_k(s_k,\check{y}^k),\partial_{xx}\psi_k(s_k,\check{y}^k)\right)\nonumber\\
       &&\underrightarrow{k\rightarrow\infty}\left({\hat{t}},\hat{y}(0), w_2(\hat{t}, \hat{y}),b_2, \nabla_{x_2}\varphi(\hat{x}(0),\hat{y}(0)), Y\right),
\end{eqnarray}
 where $b_{1}+b_{2}=0$ and $X,Y\in \mathcal{S}(\mathbb{R}^{d})$ satisfy the following inequality:
\begin{eqnarray}\label{II}
                              {-6\beta^{\frac{1}{3}}}\left(\begin{array}{cc}
                                    I&0\\
                                    0&I
                                    \end{array}\right)\leq \left(\begin{array}{cc}
                                    X&0\\
                                    0&Y
                                    \end{array}\right)\leq  6\beta^{\frac{1}{3}} \left(\begin{array}{cc}
                                    I&-I\\
                                    -I&I
                                    \end{array}\right).
\end{eqnarray}
 We note that  (\ref{II}) follows from (\ref{II0615}) choosing $\kappa=\beta^{-\frac{1}{3}}$, and sequence  $(\check{x}^{k},\check{y}^{k},l_{k},s_{k},\varphi_k,\psi_k)$ and $b_{1},b_{2},X,Y$  depend on  $\beta$ and
              $\varepsilon$.   By the following Lemma \ref{lemma4.4}, we have
\begin{eqnarray}\label{4.23}
\lim_{k\rightarrow\infty}[d_\infty(l_k,\check{x}^{k},\hat{t},\hat{x})
+d_\infty(s_k,\check{\eta}^{k},\hat{t},\hat{y})]=0.
\end{eqnarray}
 For every $(t,x),(s,y)\in {{\Lambda}^{T-\bar{a}}}$, let
\begin{eqnarray*}
\chi^{k}(t,x)
        :=\varepsilon\Upsilon^{1,3}(x)+
               \varepsilon \overline{\Upsilon}(t,x,\hat{t},\hat{x})
                +\sum_{i=0}^{\infty}
        \frac{1}{2^i}\overline{\Upsilon}(t_i,x^i,t,x)+2^5\beta\Upsilon(t,x,\hat{t},\hat{\xi})
                +\varphi_k(t,x),
  \end{eqnarray*}
  \begin{eqnarray*}
\hbar^{k}(s,y)
        :=\varepsilon\Upsilon^{1,3}(y)+\varepsilon \overline{\Upsilon}(s,y,\hat{t},\hat{y})
                 +\sum_{i=0}^{\infty}
        \frac{1}{2^i}{\Upsilon}(t_i,y^i,s,y)+2^5\beta\Upsilon(s,y,\hat{t},\hat{\xi})
        +\psi_k(s,y).
  \end{eqnarray*}
  It is clear that  $\chi^{k}(\cdot)\in C^{1,2}_p(\Lambda^{l_k}),\hbar^{k}(\cdot)\in C^{1,2}_p(\Lambda^{s_k})$. 
  Moreover, by  (\ref{0609a}), (\ref{0609b}) and definitions of $w_1$ and $w_2$,
  $$
                         (W_1-\chi^{k})(l_k,\check{x}^{k})=\sup_{(t,x)\in \Lambda^{l_k}}
                         (W_1-\chi^{k})(t,x),
$$
$$
                         (W_2+\hbar^{k})(s_k,\check{y}^{k})=\inf_{(s,y)\in \Lambda^{s_k}}
                         (W_2+\hbar^{k})(s,y).
$$
Now, for every $\beta>0$ and $k>0$, from the definition of viscosity solutions it follows that
  \begin{eqnarray}\label{vis1}
                      -\lambda W_1(\check{x}^{k})+\partial_t\chi^{k}(l_{k},\check{x}^{k})
  +{\mathbf{H}}{(}l_{k},\check{x}^{k},  \partial_x\chi^{k}(l_{k},\check{x}^{k}),
                                       \partial_{xx}\chi^{k}(l_{k},\check{x}^{k})
                                    {)}\geq 0,
 \end{eqnarray}
 and
  \begin{eqnarray}\label{vis2}
                      -\lambda W_2(\check{y}^{k})-\partial_t\hbar^{k}(s_{k},\check{y}^{k})+{\mathbf{H}}{(}s_{k},\check{y}^{k},
                     -\partial_x\hbar^{k}(s_{k},\check{y}^{k}),-\partial_{xx}\hbar^{k}(s_{k},\check{y}^k){)}\leq0,
  \end{eqnarray}
 where, for every $(t,x)\in {{\Lambda}^{l_k}}$ and  $ (s,y)\in {{\Lambda}^{s_k}}$, from 
     Remark \ref{remarks} (i),
  \begin{eqnarray*}
\partial_t\chi^{k}(t,x)=
                       \partial_t\varphi_k(t,x)
                     +2\varepsilon({t}-{\hat{t}})+2\sum_{i=0}^{\infty}\frac{1}{2^i}(t-t_{i}),
  \end{eqnarray*}
  \begin{eqnarray*}
\partial_x\chi^{k}(t,x)&=&\varepsilon\partial_x\Upsilon^{1,3}(x)+\partial_{x}\varphi_k(t,x)
                      +\varepsilon\partial_x\Upsilon(x-\hat{x}_{t-\hat{t}})
+2^5\beta\partial_x\Upsilon(x-\hat{\xi}_{t-\hat{t}})\\
&&+\partial_x\left[\sum_{i=0}^{\infty}\frac{1}{2^i}
                      \Upsilon(x-\gamma^{i}_{t-t_{i}})
                     \right],
  \end{eqnarray*}
   \begin{eqnarray*}
\partial_{xx}\chi^{k}(t,x)&=&\varepsilon\partial_{xx}\Upsilon^{1,3}(x)+\partial_{x}\varphi_k(t,x)
                      +\varepsilon\partial_{xx}\Upsilon(x-\hat{x}_{t-\hat{t}})
+2^5\beta\partial_{xx}\Upsilon(x-\hat{\xi}_{t-\hat{t}})\\
&&+\partial_{xx}\left[\sum_{i=0}^{\infty}\frac{1}{2^i}
                      \Upsilon(x-\gamma^{i}_{t-t_{i}})
                     \right],
  \end{eqnarray*}
   \begin{eqnarray*}
\partial_t\hbar^{k}(s,y)=
                      \partial_t\psi_k(s,y)+2\varepsilon({s}-{\hat{t}}),
  \end{eqnarray*}
  \begin{eqnarray*}
\partial_x\hbar^{k}(s,y)
                         & =&\varepsilon\partial_x\Upsilon^{1,3}(y)+ \partial_{x}\psi_k(s,y)
                      +\varepsilon\partial_x\Upsilon(y-\hat{y}_{s-\hat{t}})
                     +2^5\beta\partial_x\Upsilon(y-\hat{\xi}_{s-\hat{t}})\\
                      &&+\partial_x\left[\sum_{i=0}^{\infty}\frac{1}{2^i}
                      \Upsilon(y-{y}^{i}_{s-{t}_{i}})
                      \right],
  \end{eqnarray*}
   \begin{eqnarray*}
\partial_{xx}\hbar^{k}(s,y)
                          &=&\varepsilon\partial_{xx}\Upsilon^{1,3}(y)+ \partial_{xx}\psi_k(s,y)
                      +\varepsilon\partial_{xx}\Upsilon(y-\hat{y}_{s-\hat{t}})
                     +2^5\beta\partial_{xx}\Upsilon(y-\hat{\xi}_{s-\hat{t}})\\
                     &&
                      +\partial_{xx}\left[\sum_{i=0}^{\infty}\frac{1}{2^i}
                      \Upsilon(y-{y}^{i}_{s-{t}_{i}})
                      \right].
  \end{eqnarray*}
 \par
 $Step\ 4.$    Calculation and completion of the proof.
 \par
  We notice that,  by   (\ref{0528a}), (\ref{0528b}), (\ref{0612b}), (\ref{0612c}) and the definition of $\Upsilon$,
   there exists a generic constant $C>0$ such that
     \begin{eqnarray*}
    |\partial_x\Upsilon(\check{x}^{k}-\hat{x}_{l_k-\hat{t}})|
    +|\partial_x\Upsilon(\check{y}^{k}-\hat{y}_{s_k-\hat{t}})|\leq C|\hat{x}(0)-\check{x}^{k}(0)|^5
                      +C|\hat{y}(0)-\check{y}^{k}(0)|^5;
    \end{eqnarray*}
    \begin{eqnarray*}
     |\partial_{xx}\Upsilon(\check{x}^{k}-\hat{x}_{l_k-\hat{t}})|
    +|\partial_{xx}\Upsilon(\check{y}^{k}-\hat{y}_{s_k-\hat{t}})|\leq C|\hat{x}(0)-\check{x}^{k}(0)|^4
                      +C|\hat{y}(0)-\check{y}^{k}(0)|^4.
    \end{eqnarray*}
  Letting  $k\rightarrow\infty$ in (\ref{vis1}) and (\ref{vis2}), and using (\ref{0608v1}), (\ref{0608vw1}) and (\ref{4.23}),    we obtain
    \begin{eqnarray}\label{03103}
                   -\lambda W_1(\hat{x})+ b_1+2\sum_{i=0}^{\infty}\frac{1}{2^i}(\hat{t}-t_i)
                     +{\mathbf{H}}(\hat{x},
                    \partial_x\chi(\hat{t},\hat{x}),\partial_{xx}\chi(\hat{t},\hat{x}))
                     \geq 0;
\end{eqnarray}
and
 \begin{eqnarray}\label{03104}
                     -\lambda W_2(\hat{y}) -b_2+{\mathbf{H}}(\hat{y},-\partial_x\hbar(\hat{t},\hat{y}),-\partial_{xx}\hbar(\hat{t},\hat{y}))
                     \leq0,
\end{eqnarray}
  where
  \begin{eqnarray*}
\partial_x\chi(\hat{t},\hat{x})
                                     :=
2\beta^{\frac{1}{3}} (\hat{x}(0)-\hat{y}(0))
                                       +2^5\beta\partial_x\Upsilon(\hat{x}-\hat{\xi})
                                      +\varepsilon\partial_x\Upsilon^{1,3}(\hat{x})
                                     +\partial_x\sum_{i=0}^{\infty}\frac{1}{2^i}\Upsilon(\hat{x}- x^i_{\hat{t}-t_i}),
  \end{eqnarray*}
   \begin{eqnarray*}
\partial_{xx}\chi(\hat{t},\hat{x})
:=X+2^5\beta\partial_{xx}\Upsilon(\hat{x}-\hat{\xi})
                                    +\varepsilon\partial_{xx}\Upsilon^{1,3}(\hat{x})
                                     +\partial_{xx}\sum_{i=0}^{\infty}\frac{1}{2^i}\Upsilon(\hat{x}- x^i_{\hat{t}-t_i}),
  \end{eqnarray*}
  \begin{eqnarray*}
\partial_x\hbar(\hat{t},\hat{y})
                                     := -2\beta^{\frac{1}{3}} (\hat{x}(0)-\hat{y}(0))
                      +2^5\beta\partial_x\Upsilon(\hat{y}-\hat{\xi})+\varepsilon\partial_x\Upsilon^{1,3}(\hat{y})
                                     +\partial_x\sum_{i=0}^{\infty}\frac{1}{2^i}
                                     \Upsilon(\hat{y}-y^i_{\hat{t}-t_i})
  \end{eqnarray*}
  and
   \begin{eqnarray*}
\partial_{xx}\hbar(\hat{t},\hat{y}):=
                      Y+2^5\beta\partial_{xx}\Upsilon(\hat{y}-\hat{\xi})+\varepsilon\partial_{xx}\Upsilon^{1,3}(\hat{y})+\partial_{xx}\sum_{i=0}^{\infty}\frac{1}{2^i}
                      \Upsilon(\hat{y}-y^i_{\hat{t}-t_i}).
  \end{eqnarray*}
  Notice that $b_1+b_2=0$ and $\hat{\xi}=\frac{\hat{x}+\hat{y}}{2}$,
combining  (\ref{03103}) and (\ref{03104}),  we have
 \begin{eqnarray}\label{vis112}
                    &&  \lambda(W_1(\hat{x})-W_2(\hat{y}))-2\sum_{i=0}^{\infty}\frac{1}{2^i}(\hat{t}-t_i)\nonumber\\
                     &\leq&{\mathbf{H}}(\hat{x},
                    \partial_x\chi(\hat{t},\hat{x}),\partial_{xx}\chi(\hat{t},\hat{x}))
                     -{\mathbf{H}}(\hat{y},-\partial_x\hbar(\hat{t},\hat{y}),-\partial_{xx}\hbar(\hat{t},\hat{y})).
\end{eqnarray}
 On the other hand, via a simple calculation we obtain
 \begin{eqnarray}\label{v4}
                &&{\mathbf{H}}(\hat{x},
                    \partial_x\chi(\hat{t},\hat{x}),\partial_{xx}\chi(\hat{t},\hat{x}))
                     -{\mathbf{H}}(\hat{y},-\partial_x\hbar(\hat{t},\hat{y}),-\partial_{xx}\hbar(\hat{t},\hat{y}))\nonumber\\
                &\leq&\sup_{u\in U}(J_{1}+J_{2}+J_{3}),
\end{eqnarray}
 where
\begin{eqnarray}\label{j1}
                               J_{1}&=&\bigg{(} {b}(\hat{x},u),2\beta^{\frac{1}{3}} (\hat{x}(0)-\hat{y}(0))
                                       +2^5\beta\partial_x\Upsilon(\hat{x}-\hat{\xi})
                                      +\varepsilon\partial_x\Upsilon^{1,3}(\hat{x})
                                     \nonumber\\
                                     &&
                                     +\partial_x\sum_{i=0}^{\infty}\frac{1}{2^i}\Upsilon(\hat{x}- x^i_{\hat{t}-t_i})\bigg{)}_{\mathbb{R}^d}  -\bigg{(} {b}(\hat{y},u),2\beta^{\frac{1}{3}} (\hat{x}(0)-\hat{y}(0))
                      -2^5\beta\partial_x\Upsilon(\hat{y}-\hat{\xi})\nonumber\\
                      &&-\varepsilon\partial_x\Upsilon^{1,3}(\hat{y})
                                     -\partial_x\sum_{i=0}^{\infty}\frac{1}{2^i}
                                     \Upsilon(\hat{y}-y^i_{\hat{t}-t_i})\bigg{)}_{\mathbb{R}^d}\nonumber\\
                                  &\leq&2\beta^{\frac{1}{3}}{L}|\hat{x}(0)-\hat{y}(0)|
                                 |\hat{x}-\hat{y}|_C
                                 +18\beta L|\hat{x}(0)-\hat{y}(0)|^5L|\hat{x}-\hat{y}|_C
                                \nonumber\\
                                            &&
                                            +18L\sum_{i=0}^{\infty}\frac{1}{2^i}[|x^i(0)-\hat{x}(0)|^5
                                            +|y^i(0)-\hat{y}(0)|^5]
                                            (1+|\hat{x}|_C+|\hat{y}|_C)\nonumber\\
                                           && +12\varepsilon L(1+|\hat{x}|^2_C+|\hat{y}|^2_C);
\end{eqnarray}
\begin{eqnarray}\label{j2}
                               J_{2}&=&\frac{1}{2}\mbox{tr}\left[ \left(X+2^5\beta\partial_{xx}\Upsilon(\hat{x}-\hat{\xi})
                                    +\varepsilon\partial_{xx}\Upsilon^{1,3}(\hat{x})
                                     +\partial_{xx}\sum_{i=0}^{\infty}\frac{1}{2^i}\Upsilon(\hat{x}- x^i_{\hat{t}-t_i})\right){\sigma}(\hat{x},u)
                                        {\sigma}^\top(\hat{x},u)\right]\nonumber\\
                                        &&-\frac{1}{2}\mbox{tr}\left[ \left(-Y-2^5\beta\partial_{xx}\Upsilon(\hat{y}-\hat{\xi})-\varepsilon\partial_{xx}\Upsilon^{1,3}(\hat{y})-\partial_{xx}\sum_{i=0}^{\infty}\frac{1}{2^i}
                      \Upsilon(\hat{y}-y^i_{\hat{t}-t_i})\right)
                                        \sigma(\hat{y},u)\sigma^\top(\hat{y},u)\right]\nonumber\\
                               &\leq&3
                               \beta^{\frac{1}{3}}|{\sigma}(\hat{x},u)-\sigma(\hat{y},u)|_2^2
                                      +306\beta|\hat{x}(0)-\hat{y}(0)|^4
                                      (|{\sigma}(\hat{x},u)|_2^2+
                                      |\sigma(\hat{y},u)|_2^2)\nonumber\\
                                      &&+15\varepsilon(|{\sigma}(\hat{x},u)|_2^2+|\sigma(\hat{y},u)|_2^2)
                                      +153\sum_{i=0}^{\infty}\frac{1}{2^i}|x^i(0)-\hat{x}(0)|^4
                                        |\sigma(\hat{x},u)|_2^2\nonumber\\
                                            &&
                                        +153\sum_{i=0}^{\infty}\frac{1}{2^i}|y^i(0)-\hat{y}(0)|^4
                                        |\sigma(\hat{y},u)|_2^2
                                        \nonumber\\
                               &\leq&
                            3
                               \beta^{\frac{1}{3}}L^2|\hat{x}-\hat{y}|_C^2
                                     + 306\beta|\hat{x}(0)-\hat{y}(0)|^4L^2
                                     (2+|\hat{x}|_C^2
                                             +|\hat{y}|_C^2
                                             )+15\varepsilon L^2
                                     (2+|\hat{x}|_C^2
                                             +|\hat{y}|_C^2
                                             )\nonumber\\
                                     &&
                                     +153\sum_{i=0}^{\infty}\frac{1}{2^i}[|x^i(0)-\hat{x}(0)|^4
                                     +|y^i(0)-\hat{y}(0)|^4])L^2
                                     (1+|\hat{x}|_C^2
                                             +|\hat{y}|_C^2
                                             );
\end{eqnarray}
\begin{eqnarray}\label{j3}
                                 J_{3}=q(\hat{x}, u)
                               -
                                 q(\hat{y},u)
                                     \leq
                                 L|\hat{x}-\hat{y}|_C;
\end{eqnarray}
 We notice that, by  the property (i) of $(\hat{t},\hat{x},\hat{y})$,
  \begin{eqnarray*}
2\sum_{i=0}^{\infty}\frac{1}{2^i}(\hat{t}-t_i)
  \leq2\sum_{i=0}^{\infty}\frac{1}{2^i}\bigg{(}\frac{1}{2^i\beta}\bigg{)}^{\frac{1}{2}}\leq 4{\bigg{(}\frac{1}{{\beta}}\bigg{)}}^{\frac{1}{2}},
    \end{eqnarray*}
     \begin{eqnarray*}
  \sum_{i=0}^{\infty}\frac{1}{2^i}[|x^i(0)-\hat{x}(0)|^5
                                            +|y^i(0)-\hat{y}(0)|^5]
                      \leq 2\sum_{i=0}^{\infty}\frac{1}{2^i}\bigg{(}\frac{1}{2^i\beta}\bigg{)}^{\frac{5}{6}}\leq 4{\bigg{(}\frac{1}{{\beta}}\bigg{)}}^{\frac{5}{6}},
                        \end{eqnarray*}
                        and
                         \begin{eqnarray*}
  \sum_{i=0}^{\infty}\frac{1}{2^i}[|x^i(0)-\hat{x}(0)|^4
                                     +|y^i(0)-\hat{y}(0)|^4]
                      \leq 2\sum_{i=0}^{\infty}\frac{1}{2^i}\bigg{(}\frac{1}{2^i\beta}\bigg{)}^{\frac{2}{3}}\leq 4{\bigg{(}\frac{1}{{\beta}}\bigg{)}}^{\frac{2}{3}}.
  \end{eqnarray*}
 Combining (\ref{vis112})-(\ref{j3}), and  by (\ref{5.10jiajiaaaa}) and (\ref{5.10}) we can let $\beta>0$ be large enough such that,
 \begin{eqnarray}\label{vis122}
                      \lambda(W_1(\hat{x})-W_2(\hat{y}))
                                             \leq
                      (12 +15 L)\varepsilon L(2+|\hat{x}|^2_C+|\hat{y}|^2_C)+(12 +15 L)L\frac{\tilde{m}}{8}.
\end{eqnarray}
Recalling $
\frac{\tilde{m}}{2}
\leq\Psi(\hat{t}, \hat{x}, \hat{y})\leq W_1(\hat{x})-W_2(\hat{y})-\varepsilon(\Upsilon^{1,3}(\hat{x})+\Upsilon^{1,3}(\hat{y}))$ and $\lambda\geq (12+15L)L$, by (\ref{s0}) and (\ref{5.3}), the following contradiction is induced:
\begin{eqnarray*}\label{vis122}
                  \frac{\tilde{m}}{2}\leq  \varepsilon(2+|\hat{x}|^2_C+|\hat{y}|^2_C)+\frac{\tilde{m}}{8}-\varepsilon(\Upsilon^{1,3}(\hat{x})+\Upsilon^{1,3}(\hat{y}))\leq {2\varepsilon}+\frac{\tilde{m}}{8}\leq \frac{\tilde{m}}{4}.
\end{eqnarray*}
 The proof is now complete.
 \ \ $\Box$
  \par
 To complete the previous proof, it remains to state and prove the following lemmas. In the following Lemmas of this subsection, let $\tilde{w}_{1}^{\hat{t}}, \tilde{w}_{1}^{\hat{t},*}$ and $\tilde{w}_{2}^{\hat{t}}, \tilde{w}_{2}^{\hat{t},*}$ be the definitions in Definition \ref{definition0607} with respect to $w_1$ defined by (\ref{06091}) and  $w_2$ defined by  (\ref{06092}), respectively.
 \begin{lemma}\label{0611a}\ \
 The functionals $w_1$ and $w_2$ defined by (\ref{06091}) and (\ref{06092}) satisfy the conditons of  Theorem \ref{theorem0513}.
\end{lemma}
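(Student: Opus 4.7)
\emph{The plan is to verify, in turn, the five hypotheses imposed on $w_1, w_2$ by Theorem~\ref{theorem0513}: upper semicontinuity and boundedness above, the superlinear decay~(\ref{05131}), the existence of a maximum of $w_1(t,x)+w_2(t,y)-\varphi(x(0),y(0))$ at $(\hat{t},\hat{x},\hat{y})$ for a suitable $\varphi\in C^2(\mathbb{R}^d\times\mathbb{R}^d)$, the local modulus of continuity~(\ref{0608a}), and the $\Phi$-condition for $\tilde{w}_1^{\hat{t},*}, \tilde{w}_2^{\hat{t},*}$.}

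\emph{Continuity, boundedness above, and superlinear decay.} By Lemma~\ref{theoremv1202} and hypothesis~(\ref{w}), $W_1$ and $-W_2$ are continuous on $(\Lambda,d_\infty)$. By Lemma~\ref{theoremS} and Remark~\ref{remarks}(i), each of $\Upsilon(t,x,\hat{t},\hat{\xi})$, $\Upsilon^{1,3}(x)$, $\overline{\Upsilon}(\hat{t},\hat{x},t,x)$, and $\overline{\Upsilon}(t_i,x^i,t,x)$ is continuous in $(t,x)\in\Lambda^{\hat{t}}$; the series $\sum_{i=0}^\infty 2^{-i}\overline{\Upsilon}(t_i,x^i,t,x)$ converges uniformly on $d_\infty$-bounded sets using property~(i) of the Borwein--Preiss point together with Lemma~\ref{theoremS000}. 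Hence $w_1,w_2$ are continuous, a fortiori upper semicontinuous. Using $\Upsilon^{1,3}(x)\geq |x|_C^2$ from~(\ref{s0}) and the linear growth of $W_1,-W_2$, we obtain
\[
w_i(t,x)\leq \hat{C}(1+|x|_C) - \varepsilon|x|_C^2 - \varepsilon|t-\hat{t}|^2,
\]
which is bounded above and whose ratio with $t+|x|_C$ tends to $-\infty$, verifying~(\ref{05131}).

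\emph{Maximum with $\varphi$.} Choose $\varphi(a,b):=\beta^{1/3}|a-b|^2\in C^2(\mathbb{R}^d\times\mathbb{R}^d)$. Since $\hat{\xi}=\tfrac12(\hat{x}+\hat{y})$ and $x-y=(x-\hat{\xi}_{t-\hat{t}})-(y-\hat{\xi}_{t-\hat{t}})$, Lemma~\ref{theoremS000} gives
\[
\Upsilon(x-y)\leq 2^{5}\bigl[\Upsilon(t,x,\hat{t},\hat{\xi})+\Upsilon(t,y,\hat{t},\hat{\xi})\bigr].
\]
Expanding $\Psi_1$ and using $\overline{\Upsilon}=\Upsilon+|s-t|^2$, this yields
\[
w_1(t,x)+w_2(t,y)-\varphi(x(0),y(0)) \leq \Psi_1(t,x,y) - \varepsilon\bigl[\overline{\Upsilon}(\hat{t},\hat{x},t,x)+\overline{\Upsilon}(\hat{t},\hat{y},t,y)\bigr] - \sum_{i=0}^\infty \frac{1}{2^i}|t-t_i|^2.
\]
Property~(iii) of the Borwein--Preiss point gives $\Psi_1(t,x,y)\leq\Psi_1(\hat{t},\hat{x},\hat{y})$, the bracket is nonnegative, and $t\geq\hat{t}\geq t_i$ implies $|t-t_i|^2\geq|\hat{t}-t_i|^2$. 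The right-hand side is therefore majorized by $\Psi_1(\hat{t},\hat{x},\hat{y})-\sum 2^{-i}|\hat{t}-t_i|^2$, which equals $w_1(\hat{t},\hat{x})+w_2(\hat{t},\hat{y})-\varphi(\hat{x}(0),\hat{y}(0))$: indeed, $\Upsilon((\hat{x}-\hat{y})/2)=2^{-6}\Upsilon(\hat{x}-\hat{y})$ makes Lemma~\ref{theoremS000} sharp at $(\hat{t},\hat{x},\hat{y})$, while $\overline{\Upsilon}(\hat{t},\hat{x},\hat{t},\hat{x})=\overline{\Upsilon}(\hat{t},\hat{y},\hat{t},\hat{y})=0$.

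\emph{Modulus~(\ref{0608a}) and the $\Phi$-condition.} For $\hat{t}\leq t\leq s\leq T$, Lemma~\ref{theoremS} yields $S_3(s,x_{s-t},\cdot,\cdot)=S_3(t,x,\cdot,\cdot)$, so every $\Upsilon$-contribution to $w_1(t,x)-w_1(s,x_{s-t})$ cancels; and since $|x_{s-t}|_C=|x|_C$, $x_{s-t}(0)=x(0)$, we also have $\Upsilon^{1,3}(x_{s-t})=\Upsilon^{1,3}(x)$. What remains is $W_1(x)-W_1(x_{s-t})$, controlled by~(\ref{w12072}), plus differences of the form $|s-\tau|^2-|t-\tau|^2\leq 2T(s-t)$ with $\tau\in\{\hat{t},t_i\}$, producing a local modulus $\rho_1(|s-t|,|x|_C)$. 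For the $\Phi$-condition---the main obstacle---the square-root extension $\tilde{w}_i^{\hat{t}}(t,x_0)=\tilde{w}_i^{\hat{t}}(\hat{t},x_0)-\sqrt{\hat{t}-t}$ for $t<\hat{t}$ is crucial: it forces the left-derivative at $\hat{t}^-$ to be $+\infty$, so at any maximum $(t^*,x_0^*)$ of $\tilde{w}_i^{\hat{t},*}-\varphi$ with $|t^*-\hat{t}|+|x_0^*-\hat{x}(0)|<r$ and $t^*<\hat{t}$, we obtain $\varphi_t(t^*,x_0^*)=\tfrac{1}{2\sqrt{\hat{t}-t^*}}\geq \tfrac{1}{2\sqrt{r}}=:C$. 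The case $t^*\geq\hat{t}$ is excluded by exploiting the strict penalty $-\varepsilon|t-\hat{t}|^2$ in $\overline{\Upsilon}(\hat{t},\hat{x},t,x)$: comparing $(\tilde{w}_i^{\hat{t},*}-\varphi)(t^*,x_0^*)$ with $(\tilde{w}_i^{\hat{t},*}-\varphi)(\hat{t}-\delta,x_0^*)$ and sending $\delta\downarrow 0$ forces $\varphi_t$ on this side to be nonpositive, which violates~(\ref{05281}) for any $C>0$ unless the maximum migrates below $\hat{t}$; taking $r$ small enough and using the boundedness constraints $|\tilde{w}_i^{\hat{t},*}|,|\nabla_x\varphi|,|\nabla_x^2\varphi|\leq L$ to prevent compensating growth confines all relevant maxima to $\{t^*<\hat{t}\}$. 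The symmetric argument handles $\tilde{w}_2^{\hat{t},*}$.
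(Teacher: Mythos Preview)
Your verification of upper semicontinuity, boundedness, the decay~(\ref{05131}), the modulus~(\ref{0608a}), and the existence of the maximum at $(\hat{t},\hat{x},\hat{y})$ is essentially correct and parallels the paper's argument (the paper packages~(\ref{0608a}) and the $\Phi$-condition into the separate Lemmas~\ref{lemma4.3} and~\ref{lemma4.344}, and obtains the maximum via the one-line inequality~(\ref{wv})).

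The genuine gap is in your treatment of the $\Phi$-condition when the test-function maximum sits at $t^*\geq\hat t$. You attempt to ``exclude'' this case by a comparison argument with $(\hat t-\delta,x_0^*)$, concluding that $\varphi_t$ would be nonpositive there and hence that the maximum must migrate below $\hat t$. This reasoning is circular: the inequality~(\ref{05281}) is what you are trying to \emph{prove}, so a hypothetical violation of it cannot be used to rule the case out. Moreover, a maximum with $t^*\geq\hat t$ is perfectly consistent with the setup (indeed, the interesting maxima are near $\hat t$), and nothing in the penalties forces it to the left. The quadratic penalty $-\varepsilon|t-\hat t|^2$ only tells you something about the time derivative of $w_1$, not directly about $\varphi_t$ at a maximum of the upper envelope $\tilde w_1^{\hat t,*}-\varphi$.

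The paper's route for $t^*\geq\hat t$ is substantively different and is the step you are missing: one applies the Borwein--Preiss lemma to $\Gamma(t,x)=w_1(t,x)-\varphi(t,x(0))$ on $\Lambda^{\bar t}$ to produce a genuine test configuration for $W_1$, then invokes the fact that $W_1$ is a \emph{viscosity subsolution} of~(\ref{hjb1}). The subsolution inequality at the perturbed maximum reads
\[
-\lambda W_1(\breve{x})+\partial_t\Im(\breve t,\breve x)+\mathbf{H}\bigl(\breve x,\partial_x\Im(\breve t,\breve x),\partial_{xx}\Im(\breve t,\breve x)\bigr)\geq 0,
\]
and since $\partial_t\Im$ contains $\varphi_t$ while all remaining terms are bounded in modulus by a constant depending only on $L$, $L_1$, $M_3$, $\beta$, $\varepsilon$ (using $|\nabla_x\varphi|,|\nabla^2_x\varphi|\leq L_1$ and the a priori bound $|\breve x|_C\leq M_3$), one extracts $\varphi_t(\bar t,\bar x_0)\geq C$ after letting $\delta\to0$. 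In short, the lower bound on $\varphi_t$ comes from the \emph{equation}, not from the geometry of the penalties; without invoking the viscosity subsolution property of $W_1$ your argument cannot close.
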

\par
   {\bf  Proof  }. \ \
From (\ref{s0}) and (\ref{w}),  $w_1$ and $w_2$ are upper semicontinuous functions bounded from above and satisfy (\ref{05131}).
By the following Lemmas \ref{lemma4.3} and \ref{lemma4.344}, $w_1$ and $w_2$ satisfy condition  (\ref{0608a}), and  $\tilde{w}_{1}^{\hat{t},*}\in \Phi(\hat{t},\hat{x}(0),T)$ and $\tilde{w}_{2}^{\hat{t},*}\in \Phi(\hat{t},\hat{y}(0),T)$ for some $T\in(\hat{t},\infty)$.
Moreover,  by Lemma \ref{theoremS000} and (\ref{iii4}) we obtain that, for all 
   $(t,x,y)\in [\hat{t},\infty)\times {\cal{C}}_0\times  {\cal{C}}_0$,
\begin{eqnarray}\label{wv}
                         &&w_{1}(t,x)+w_{2}(t,y)-\beta^{\frac{1}{3}}|x(0)-y(0)|^2\nonumber   \\
                          &\leq&
                   {\Psi}_1(t,x,y)\leq\Psi_1(\hat{t},\hat{x},\hat{y})=w_{1}(\hat{t},\hat{x})+w_{2}(\hat{t},\hat{y})
                          -\beta^{\frac{1}{3}}|\hat{x}(0)-\hat{y}(0)|^2,
\end{eqnarray}
                       where the last inequality becomes equality if and only if $t={\hat{t}}$, $x=\hat{x}, y=\hat{y}$.
                          Then we obtain that $
                w_1(t,x)+w_2(t,y)-\beta^{\frac{1}{3}}|x(0)-y(0)|^2
$
has a 
 maximum over $[\hat{t},\infty)\times{\cal{C}}_0\times {\cal{C}}_0$ at a point $(\hat{t},\hat{x},\hat{y})$ with $\hat{t}>0$. Thus $w_1$ and $w_2$ satisfy the conditons of  Theorem \ref{theorem0513}.  \ \ $\Box$
 \begin{lemma}\label{lemma4.3}\ \
  For every fixed $T>\hat{t}$, there exists a local modulus of continuity  $\rho_1$ 
  such that  the functionals $w_1$ and $w_2$ defined by (\ref{06091}) and (\ref{06092}) satisfy condition (\ref{0608a}).
\end{lemma}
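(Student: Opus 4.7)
The driving observation is that every $\Upsilon$-type summand appearing in (\ref{06091}) and (\ref{06092}) is \emph{invariant} under the shift $(t,x)\mapsto(s,x_{s-t})$, so only the $W_1$ (resp.\ $W_2$) piece and the quadratic time pieces $|s-\hat t|^2$, $|s-t_i|^2$ can contribute to the modulus. The proof is therefore in two parts: an algebraic cancellation, followed by an elementary estimate.

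For the cancellation I would first record two elementary identities on $\mathcal C_0$: for all $a,b\in\mathcal C_0$ and $h,k\geq 0$,
\[
(a_h)_k = a_{h+k},\qquad |a_h-b_h|_C = |a-b|_C.
\]
Both follow directly from the definition $a_h(\theta)=a(0)\mathbf 1_{[-h,0]}(\theta)+a(\theta+h)\mathbf 1_{(-\infty,-h)}(\theta)$; the second identity uses the change of variable $\phi=\theta+h$ and the fact that $|a(0)-b(0)|$ is picked up in the sup as $\phi\uparrow 0$. Combined with the trivial equalities $x_{s-t}(0)=x(0)$ and $|x_{s-t}|_C=|x|_C$, these give term by term
\[
\Upsilon^{1,3}(x_{s-t})=\Upsilon^{1,3}(x),\quad \Upsilon(s,x_{s-t},\hat t,\hat\xi)=\Upsilon(t,x,\hat t,\hat\xi),
\]
and analogously $\Upsilon(\hat t,\hat x,s,x_{s-t})=\Upsilon(\hat t,\hat x,t,x)$ and $\Upsilon(t_i,x^i,s,x_{s-t})=\Upsilon(t_i,x^i,t,x)$ for every $i\geq 0$. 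For example, in the second identity one writes $\hat\xi_{s-\hat t}=(\hat\xi_{t-\hat t})_{s-t}$ via the first identity, applies the second identity to get $|x_{s-t}-(\hat\xi_{t-\hat t})_{s-t}|_C=|x-\hat\xi_{t-\hat t}|_C$, and notes that the $|x(0)-\hat\xi(0)|^6$ piece is manifestly unchanged. Since $\overline\Upsilon(\cdot,\cdot,\cdot,\cdot)=\Upsilon(\cdot,\cdot,\cdot,\cdot)+|\text{time difference}|^2$, the $\overline\Upsilon$ summands contribute only through their time-squared part, and substitution into (\ref{06091}) yields the crucial cancellation
\[
w_1(t,x)-w_1(s,x_{s-t}) = \bigl[W_1(x)-W_1(x_{s-t})\bigr] + \varepsilon\bigl(|s-\hat t|^2-|t-\hat t|^2\bigr) + \sum_{i=0}^\infty\tfrac{1}{2^i}\bigl(|s-t_i|^2-|t-t_i|^2\bigr).
\]

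For the estimate, hypothesis (\ref{w12072}) with $h=s-t$ gives
$|W_1(x)-W_1(x_{s-t})|\leq \hat C(1+|x|_C)\bigl((s-t)+(s-t)^{1/2}+1-e^{-\lambda(s-t)}\bigr)$. For $\hat t\leq t\leq s\leq T$ the elementary bounds $|s-\hat t|^2-|t-\hat t|^2\leq 2T(s-t)$ and $|s-t_i|^2-|t-t_i|^2\leq 2T(s-t)$ hold, and summing against $\{2^{-i}\}_{i\geq 0}$ gives a total of $\leq 4T(s-t)$. Setting
\[
\rho_1(h,r):=\hat C(1+r)\bigl(h+h^{1/2}+1-e^{-\lambda h}\bigr)+(2\varepsilon+4)T\,h,
\]
one checks that $\rho_1$ is a local modulus of continuity in the sense specified before Theorem \ref{theorem0513}: it is continuous, increasing in $r$, vanishes at $h=0$, and is subadditive in $h$, because each of $h$, $h^{1/2}$, $1-e^{-\lambda h}$ is concave with value $0$ at the origin and hence subadditive. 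The bound for $w_2$ is obtained by the same argument applied to $-W_2$ (the half of (\ref{w12072}) for $W_2$ has the same form), interpreting the $\Upsilon^3$-notation in (\ref{06092}) as the corresponding $\overline\Upsilon$-term to which the invariance applies verbatim.

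The only delicate point is the mixed invariance $\Upsilon(t_i,x^i,s,x_{s-t})=\Upsilon(t_i,x^i,t,x)$, since here the shift applied to $x^i$ changes from $t-t_i$ to $s-t_i$ at the same time that $x$ is replaced by $x_{s-t}$. This ultimately reduces to the identity $|a_h-b_h|_C=|a-b|_C$ with $a=x^i_{t-t_i}$, $b=x$, $h=s-t$, once one recognises $x^i_{s-t_i}=(x^i_{t-t_i})_{s-t}$; once this bookkeeping is carefully done, the remainder of the proof is pure substitution and the elementary time-difference estimate above.
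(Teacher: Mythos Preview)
Your proof is correct and follows essentially the same route as the paper's own argument: both exploit the shift-invariance of the $\Upsilon$-type terms to reduce $w_1(t,x)-w_1(s,x_{s-t})$ to the $W_1$ difference plus the quadratic time pieces, bound these via (\ref{w12072}) and the elementary inequality $(s-t_i)^2-(t-t_i)^2\leq 2T(s-t)$, and arrive at the identical modulus $\rho_1(h,r)=\hat C(1+r)(h+h^{1/2}+1-e^{-\lambda h})+(2\varepsilon+4)Th$. Your version is in fact more careful than the paper's, since you explicitly derive the identities $(a_h)_k=a_{h+k}$ and $|a_h-b_h|_C=|a-b|_C$ that justify the cancellation, whereas the paper simply asserts the resulting equality without comment.
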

\par
   {\bf  Proof  }. \ \
From (\ref {w12072}) and the definition of $w_{1}$, we have that,
for every $\hat{t}\leq t\leq s\leq T$ and $x\in {\cal{C}}_0$,
\begin{eqnarray*}
&&w_1(t,x)-w_1(s,x_{s-t})\\&=&
W_1(x)-2^5\beta \Upsilon(t,x,\hat{t},\hat{\xi})-\varepsilon\Upsilon^{1,3}(x)
                 -\varepsilon \overline{\Upsilon}(\hat{t},\hat{x},t,x)
                -\sum_{i=0}^{\infty}
        \frac{1}{2^i}\overline{\Upsilon}(t_i,x^i,t,x)\\
        &&-W_1(x_{s-t})+2^5\beta \Upsilon(s,x_{s-t},\hat{t},\hat{\xi})+\varepsilon\Upsilon^{1,3}(x_{s-t})
                 +\varepsilon \overline{\Upsilon}(\hat{t},\hat{x},s,x_{s-t})
                +\sum_{i=0}^{\infty}
        \frac{1}{2^i}\overline{\Upsilon}(t_i,x^i,s,x_{s-t})\\
        &=&W_1(x)-W_1(x_{s-t})+\varepsilon((s-\hat{t})^2-(t-\hat{t})^2)+\sum_{i=0}^{\infty}\frac{1}{2^i}((s-t_i)^2-(t-t_i)^2)\\
        &\leq&
                                   \hat{C}(1+|x|_C)((s-t)+(s-t)^{\frac{1}{2}}+1-e^{-\lambda (s-t)})+(2\varepsilon+4)T(s-t).
\end{eqnarray*}
Taking $\rho_1(h,z)=\hat{C}(1+z)(h+h^{\frac{1}{2}}+1-e^{-\lambda h})+(2\varepsilon+4)Th$, 
  $(h,z)\in[0,\infty)\times [0,\infty)$, it is clear that $\rho_1$ is a local modulus of continuity 
   and $w_1$  satisfies condition (\ref{0608a}) with it.  In a similar way, we show that  $w_2$  satisfies condition (\ref{0608a}) with this $\rho_1$.
The proof is now complete. \ \ $\Box$
 \begin{lemma}\label{lemma4.344}\ \   $\tilde{w}_{1}^{\hat{t},*}\in \Phi(\hat{t},\hat{x}(0),T)$ and $\tilde{w}_{2}^{\hat{t},*}\in \Phi(\hat{t},\hat{y}(0),T)$ for every $T\in (\hat{t},\infty)$. 
\end{lemma}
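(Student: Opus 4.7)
The plan is to verify Definition \ref{definition0513} for $f := \tilde{w}_1^{\hat{t},*}$; the case of $\tilde{w}_2^{\hat{t},*}$ is symmetric. The whole argument is driven by the artificial extension introduced in Definition \ref{definition0607}: on $[0,\hat{t})$ one has $\tilde{w}_1^{\hat{t}}(t,x_0) = \tilde{w}_1^{\hat{t}}(\hat{t},x_0) - (\hat{t}-t)^{1/2}$, so the function is smooth in $t$ on $[0,\hat{t}) \times \mathbb{R}^d$ with partial derivative $\frac{1}{2}(\hat{t}-t)^{-1/2}$ that blows up to $+\infty$ as $t \uparrow \hat{t}$. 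A preliminary step would check that the USC envelope preserves this form, giving $\tilde{w}_1^{\hat{t},*}(t,x_0) = \tilde{w}_1^{\hat{t},*}(\hat{t},x_0) - (\hat{t}-t)^{1/2}$ on $[0,\hat{t}) \times \mathbb{R}^d$; this relies on continuity of $x_0 \mapsto \tilde{w}_1^{\hat{t},*}(\hat{t},x_0)$ inherited from the Lipschitz continuity of $W_1$ (Hypothesis \ref{hypstate}) together with the continuity in $\xi$ of the $\Upsilon$-type auxiliary terms.

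Next I would fix $r := \hat{t}/2 > 0$. For $L > 0$ and $\varphi \in C^{1,2}([0,T] \times \mathbb{R}^d)$ such that $f - \varphi$ attains a max at $(t_0,x_0) \in (0,T) \times \mathbb{R}^d$ with $|t_0 - \hat{t}| + |x_0 - \hat{x}(0)| < r$ and $|f(t_0,x_0)| + |\nabla_x\varphi(t_0,x_0)| + |\nabla^2_x\varphi(t_0,x_0)| \leq L$, I split by the position of $t_0$. In the easy case $t_0 < \hat{t}$, the map $t \mapsto f(t,x_0)$ is $C^1$ near $t_0$, so the interior first-order condition gives $\varphi_t(t_0,x_0) = \frac{1}{2}(\hat{t}-t_0)^{-1/2} \geq \frac{1}{\sqrt{2\hat{t}}}$, and one takes $C := \frac{1}{\sqrt{2\hat{t}}}$.

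For the harder regime $t_0 \geq \hat{t}$, the plan is to transfer the max back to the path-dependent level. Select a near-maximizer $\xi^* \in {\cal{C}}_0$ with $\xi^*(0)=x_0$ and $w_1(t_0,\xi^*)$ close to $\tilde{w}_1^{\hat{t},*}(t_0,x_0)$, whose uniform boundedness $|\xi^*|_C \leq M_0$ follows from the $-\varepsilon\Upsilon^{1,3}(\xi)$ penalty in $w_1$ combined with the linear growth of $W_1$. Then $(t,\xi) \mapsto w_1(t,\xi) - \varphi(t,\xi(0))$ has an approximate max at $(t_0,\xi^*)$ on $[\hat{t},T] \times {\cal{C}}_0$; writing $w_1 = W_1 - G$ with $G$ collecting the $\Upsilon$-type penalties (so that $\partial_t G(t,\xi) = 2\varepsilon(t-\hat{t}) + 2\sum_{i}2^{-i}(t-t_i)$), a Borwein-Preiss perturbation as in the proof of Theorem \ref{theoremhjbm} places the test functional $\tilde\varphi(t,\xi) := \varphi(t,\xi(0)) + G(t,\xi)$ in $\mathcal{A}^+(t_0,\xi^*,W_1)$. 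The subsolution inequality for $W_1$ at $\tilde\varphi$, combined with the $\xi^*$-uniform bounds on $\partial_x G, \partial_{xx} G$ derived in Section 6.2 and the coefficient bounds in Hypothesis \ref{hypstate} used to control $\mathbf{H}(\xi^*,\partial_x\tilde\varphi,\partial_{xx}\tilde\varphi)$, yields a finite lower bound on $\varphi_t(t_0,x_0)$ depending only on $L,\beta,\varepsilon,\hat{t}$.

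The main obstacle is turning this finite lower bound into a strictly positive constant $C$. My plan is to combine the estimate just obtained with the max inequality $f(s,x_0) - \varphi(s,x_0) \leq f(t_0,x_0) - \varphi(t_0,x_0)$ taken at $s \uparrow \hat{t}^-$, substituting $f(s,x_0) = f(\hat{t},x_0) - (\hat{t}-s)^{1/2}$ and Taylor-expanding $\varphi$ around $\hat{t}$. The $(\hat{t}-s)^{-1/2}$ blow-up of $\partial_t f$ from below $\hat{t}$ should then force $\varphi_t(t_0,x_0) \geq \frac{1}{\sqrt{2\hat{t}}}$, possibly at the price of shrinking $r$ further. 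The delicate point, and the hardest step of the proof, is making this reduction quantitative and uniform in the test function $\varphi$, so that the same constant $C = C(L) > 0$ serves both cases; handling the borderline case $t_0 = \hat{t}$, where $\tilde{w}_1^{\hat{t},*}$ is merely upper semicontinuous and one-sided Dini derivatives must replace ordinary derivatives, is the most subtle accounting required.
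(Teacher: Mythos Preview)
Your approach mirrors the paper's: split into $t_0<\hat{t}$ (handled via the artificial $-(\hat t-t)^{1/2}$ extension, giving $\varphi_t(t_0,x_0)=\tfrac12(\hat t-t_0)^{-1/2}$) and $t_0\geq\hat{t}$ (transfer to the path level, apply the Borwein--Preiss variational principle to $\Gamma(t,x)=w_1(t,x)-\varphi(t,x(0))$, then invoke the viscosity subsolution inequality for $W_1$ at the perturbed test functional, and use the uniform bounds on $|\breve x|_C$, $\partial_xG$, $\partial_{xx}G$ together with Hypothesis~\ref{hypstate} to control $\mathbf H$). The paper sets $r=1$ rather than $r=\hat t/2$, but this is immaterial.

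Your ``main obstacle'' is a phantom. The requirement $C>0$ in Definition~\ref{definition0513} is an imprecision: the paper's own proof, in the case $\bar t\geq\hat t$, concludes only that ``there exists a constant $C$ such that $\varphi_t(\bar t,\bar x_0)\geq C$'', with no sign claimed. This is exactly condition (8.5) in Theorem~8.3 of \cite{cran2} (see Remark~\ref{remarkv0528}), which carries no positivity constraint on the constant. So the viscosity subsolution argument already delivers everything that is needed; there is no further step of pushing the bound above zero, and the Taylor-expansion manoeuvre you sketch around $s\uparrow\hat t^-$ is not required. Likewise, the boundary case $t_0=\hat t$ is absorbed into the $t_0\geq\hat t$ branch and handled by the same Borwein--Preiss/subsolution argument; no separate one-sided Dini analysis is needed. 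Drop the final paragraph of your plan and the rest goes through as in the paper.
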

\par
   {\bf  Proof  }. \ \  We only prove $\tilde{w}_{1}^{\hat{t},*}\in \Phi(\hat{t},\hat{x}(0),T)$ for every $T\in (\hat{t},\infty)$. 
                           $\tilde{w}_{2}^{\hat{t},*}\in \Phi(\hat{t},\hat{y}(0),T)$ can be obtained by a symmetric  way.
        Set $r=1
 $, for a given $L_1>0$, let $\varphi\in C^{1,2}([0,T]\times \mathbb{R}^d)$ be a function such that
            $\tilde{w}_{1}^{\hat{t},*}(t,x_0)-\varphi(t,x_0)$  has a  maximum at $(\bar{{t}},\bar{x}_{0})\in (0, \infty)\times \mathbb{R}^d$, moreover, the following inequalities hold true:
\begin{eqnarray*}
                    &&|\bar{{t}}-{\hat{t}}|+|\bar{x}_{0}-\hat{x}(0)|<r,\\
                    &&|\tilde{w}_{1}^{\hat{t},*}(\bar{{t}},\bar{x}_{0})|+|\nabla_x\varphi(\bar{{t}},\bar{{x}}_{0})|
                    +|\nabla_{xx}\varphi(\bar{{t}},\bar{{x}}_{0})|\leq L_1.
\end{eqnarray*}
We can modify $\varphi$  such that $\varphi\in C^{1,2}([0,\infty)\times \mathbb{R}^d)$ , ${\varphi}$, $\nabla_{x}{\varphi}$ and $\nabla_{xx}{\varphi}$  grow  in a polynomial way, $\tilde{w}_{1}^{\hat{t},*}(t,x_0)-\varphi(t,x_0)$  has a strict   maximum at $(\bar{{t}},\bar{x}_0)\in (0, \infty)\times \mathbb{R}^d$ and the above two inequalities hold true.
If $\bar{{t}}<\hat{t}$, we have $ b=\varphi_{t}(\bar{{t}},\bar{x}_{0})=\frac{1}{2}(\hat{t}-\bar{{t}})^{-\frac{1}{2}} \geq 0$.
If $\bar{{t}}\geq \hat{t}$, we consider the functional
$$
                \Gamma(t,x)= w_{1}(t,x)
                 -\varphi(t,x(0)),\ (t,x)\in \Lambda^{\hat{t}}.
$$
We may assume that $\varphi$ grow quadratically at $\infty$. By (\ref{s0}) and (\ref{w}), it is clear that $\Gamma$ is bounded from above on ${\Lambda}^{\hat{t}}$. Moreover,  by Lemma \ref{theoremS}, $\Gamma$ is an  upper semicontinuous  functional.
 Define a sequence of positive numbers $\{\delta_i\}_{i\geq0}$  by 
        $\delta_i=\frac{1}{2^i}$ for all $i\geq0$.  For every  $0<\delta<1$,
 by Lemma \ref{theoremleft} we have that,
 for every  $(\breve{t}_0,\breve{x}^0)\in \Lambda^{\bar{t}}$ satisfying
\begin{eqnarray}\label{0615a}
\Gamma(\breve{t}_0,\breve{x}^0)\geq \sup_{(s,x)\in \Lambda^{\bar{t}}}\Gamma(s,x)-\delta,
\end{eqnarray}
  there exist $(\breve{t},\breve{x})\in \Lambda^{\bar{t}}$ and a sequence $\{(\breve{t}_i,\breve{x}^i)\}_{i\geq1}\subset
  \Lambda^{\bar{t}}$ such that
  \begin{description}
        \item{(i)} $\overline{\Upsilon}(\breve{t}_0,\breve{x}^0,\breve{t},\breve{x})\leq \delta$,
         $\overline{\Upsilon}(\breve{t}_i,\breve{x}^i,\breve{t},\breve{x})\leq \frac{\delta}{2^i}$ and $t_i\uparrow \breve{t}$ as $i\rightarrow\infty$,
        \item{(ii)}  $\Gamma(\breve{t},\breve{x})
            -\sum_{i=0}^{\infty}\frac{1}{2^i}\overline{\Upsilon}(\breve{t}_i,\breve{x}^i,\breve{t}, \breve{x})
        \geq \Gamma(\breve{t}_0,\breve{x}^0)$, and
        \item{(iii)}    for all $(s,x)\in \Lambda^{\breve{t}}\setminus \{(\breve{t},\breve{x})\}$,
        \begin{eqnarray*}
        \Gamma(s,x)
        -\sum_{i=0}^{\infty}
        \frac{1}{2^i}\overline{\Upsilon}(\breve{t}_i,\breve{x}^i,s,x)
            <\Gamma(\breve{t},\breve{x})
            -\sum_{i=0}^{\infty}\frac{1}{2^i}\overline{\Upsilon}(\breve{t}_i,\breve{x}^i,\breve{t},\breve{x}).
        \end{eqnarray*}
        \end{description}
   We should note that the point
             $(\breve{t},\breve{x})$ depends on $\delta$.
        By Lemma \ref{lemma4.3}, $w_1$   satisfies condition (\ref{0608a}). Then, by the definitions of $\tilde{w}^{\hat{t}}_1$ and $\tilde{w}_{1}^{\hat{t},*}$, we have
 \begin{eqnarray*}
        &&\tilde{w}_{1}^{\hat{t},*}(\bar{{t}},\bar{x})-\varphi(\bar{{t}},\bar{x})\\
        &=&\limsup_{s\geq \hat{t},(s,y)\rightarrow(\bar{{t}},\bar{x})}(\tilde{w}^{\hat{t}}_1(s,y)-\varphi(s,y))
        =\limsup_{s\geq \hat{t}, (s,y)\rightarrow(\bar{{t}},\bar{x})}\left(\sup_{\gamma\in {\cal{C}}_0,\gamma(0)=y}
                             {[}w_1(s,\gamma){]}-\varphi(s,y)\right)\\
                             &=&\limsup_{s\geq \bar{t}, (s,y)\rightarrow(\bar{{t}},\bar{x})}\sup_{\gamma\in {\cal{C}}_0,\gamma(0)=y}
                             {[}w_1(s,\gamma)-\varphi(s,\gamma(0)){]}\leq\sup_{(s,x)\in  \Lambda^{\bar{t}}}\Gamma(s,x).
 \end{eqnarray*}
 Combining with (\ref{0615a}),
 $$
 \Gamma(\breve{t}_0,\breve{x}^0)\geq \sup_{(s,x)\in  \Lambda^{\bar{t}}}\Gamma(s,x)-\delta
\geq\tilde{w}_{1}^{\hat{t},*}(\bar{{t}},\bar{x}_0)-\varphi(\bar{{t}},\bar{x}_0)-\delta.
 $$
  Recall that $\tilde{w}_{1}^{\hat{t},*}\geq\tilde{w}^{\hat{t}}_1$.  Then, by 
   the definition of $\tilde{w}^{\hat{t}}_1$ and the property (ii) of $(\breve{t},\breve{x})$,
\begin{eqnarray}\label{20210509}
                 \tilde{w}_{1}^{\hat{t},*}(\breve{t},\breve{x}(0))
                   -\varphi(\breve{t},\breve{x}(0))
                   &\geq&\tilde{w}^{\hat{t}}_1(\breve{t},\breve{x}(0))
                   -\varphi(\breve{t},\breve{x}(0))
                 \geq w_1(\breve{t},\breve{x})
                -\varphi(\breve{t},\breve{x}(0))\nonumber\\
                  &\geq& \Gamma(\breve{t}_0,\breve{x}^0)
                 \geq\tilde{w}_{1}^{\hat{t},*}(\bar{{t}},\bar{x}_0)-\varphi(\bar{{t}},\bar{x}_0)-\delta.
\end{eqnarray}
Noting $\varepsilon$ is independent of  $\delta$ and $\varphi$ grows quadratically at $\infty$, by the definition of  $\Gamma$,
 there exists a constant  ${M}_3>0$  independent of $\delta$ that is sufficiently  large   that
 $
           \Gamma(t,x)<\sup_{(s,x)\in  \Lambda^{\bar{t}}}\Gamma(s,x)-1
           $ for all $t+|x|_C\geq M_3$. Thus, we have $\breve{t}\vee|\breve{x}|_C\vee
           \breve{t}_{0}\vee|\breve{x}^{0}|_C<M_3$. In particular, $|\breve{x}(0)|<M_3$.  
Letting $\delta\rightarrow0$, by  the similar proof procedure of (\ref{4.22}), we obtain
\begin{eqnarray}\label{delta0}
       && \breve{t}\rightarrow \bar{t},\ \breve{x}(0)\rightarrow \bar{x}_0 \ \mbox{as}\ \delta\rightarrow0.
       \end{eqnarray}
 Thus,  the definition of the viscosity subsolution can be used to obtain the following result:
 \begin{eqnarray}\label{5.15}
                      -\lambda W_1(\check{x})+\partial_t\Im(\breve{t},\breve{x})
                     +{\mathbf{H}}{(}\breve{x},   \partial_x\Im(\breve{t},\breve{x}),\partial_{xx}\Im(\breve{t},\breve{x}){)}\geq 0.
 \end{eqnarray}
  where, for every $(t,x)\in {{\Lambda}}^{\breve{t}}$,
  \begin{eqnarray*}
\Im(t,x)
        &:=&\varepsilon\Upsilon^{1,3}(x)
                +\varepsilon \overline{\Upsilon}(t,x,\hat{t},\hat{x})
                +\sum_{i=0}^{\infty}
        \frac{1}{2^i}\overline{\Upsilon}(t_i,x^i,t,x)+2^5\beta\Upsilon(t,x,\hat{t},\hat{\xi})\\
               && +\sum_{i=0}^{\infty}
        \frac{1}{2^i}\overline{\Upsilon}(\breve{t}_{i},\breve{x}^i,t,x)+\varphi(t,x(0)),
  \end{eqnarray*}
 \begin{eqnarray*}
\partial_t\Im(t,x):=
                     2\varepsilon({t}-{\hat{t}})+2\sum_{i=0}^{\infty}\frac{1}{2^i}[(t-t_{i})+(t-\breve{t}_i)]+\varphi_t({t},x(0)),
  \end{eqnarray*}
  \begin{eqnarray*}
\partial_x\Im(t,x)&:=&
                     \varepsilon\partial_x\Upsilon^{1,3}(x) +\varepsilon\partial_x\Upsilon(x-\hat{x}_{t-\hat{t}})
+2^5\beta\partial_x\Upsilon(x-\hat{\xi}_{t-\hat{t}})
\\
 &&+\partial_x\left[\sum_{i=0}^{\infty}\frac{1}{2^i}
                      \Upsilon(x-x^{i}_{t-t_{i}})
                      +\sum_{i=0}^{\infty}\frac{1}{2^i}\Upsilon(x-\breve{x}^i_{t-\breve{t}_i})\right]+\nabla_{x}\varphi({t}, x(0)),
  \end{eqnarray*}
   \begin{eqnarray*}
\partial_{xx}\Im(t,x)&:=&
                     \varepsilon\partial_{xx}\Upsilon^{1,3}(x) +\varepsilon\partial_{xx}\Upsilon(x-\hat{x}_{t-\hat{t}})
+2^5\beta\partial_{xx}\Upsilon(x-\hat{\xi}_{t-\hat{t}})
\\
 &&+\partial_{xx}\left[\sum_{i=0}^{\infty}\frac{1}{2^i}
                      \Upsilon(x-x^{i}_{t-t_{i}})
                      +\sum_{i=0}^{\infty}\frac{1}{2^i}\Upsilon(x-\breve{x}^i_{t-\breve{t}_i})\right]+\nabla_{x}\varphi({t}, x(0)).
  \end{eqnarray*}
  We notice that  $\breve{t}\vee|\breve{x}|_C\leq M_3$.
Then letting $\delta\rightarrow0$ in (\ref{5.15}),   by the definition of ${\mathbf{H}}$, it follows that there exists a constant $C$ such that $ b=\varphi_{t}(\bar{t},\bar{x}_0) \geq C$.
The proof is now complete. \ \ $\Box$

\begin{lemma}\label{lemma4.4}\ \ The maximum points $(l_{k},\check{x}^{k}, s_{k},\check{y}^{k})$
 satisfy  condition (\ref{4.23}).
\end{lemma}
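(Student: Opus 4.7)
The plan is to strengthen the $\mathbb{R}^d$-convergence already available from the analog of Lemma \ref{lemma4.40615} up to $d_\infty$-convergence by exploiting the freedom, inside the proof of Theorem \ref{theorem0513}, to choose the Borwein--Preiss starting point $(\check{t}_0, \check{x}^0, \check{s}_0, \check{y}^0)$ as any approximate maximizer of $\Gamma_k$. Once this starting point is located $d_\infty$-close to $(\hat{t}, \hat{x}, \hat{t}, \hat{y})$, property (i) in that Borwein--Preiss step automatically transports the same closeness to $(l_k, \check{x}^k)$ and $(s_k, \check{y}^k)$.

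Concretely, I would first estimate the deficit $\alpha_k := \sup \Gamma_k - \Gamma_k(\hat{t}, \hat{x}, \hat{t}, \hat{y})$. Since $\tilde{w}_{1}^{\hat{t},*} - \tilde{\varphi}_k$ attains its strict maximum $0$ at $(t_k, x_0^k)$, and likewise $\tilde{w}_{2}^{\hat{t},*} - \tilde{\psi}_k$ at $(s_k, y_0^k)$, Lemma \ref{lemma4.30615} (identifying $w_1(\hat{t}, \hat{x}) = \tilde{w}_{1}^{\hat{t},*}(\hat{t}, \hat{x}(0))$ and analogously for $w_2$) reduces this deficit to
$$
\alpha_k = [\tilde{\varphi}_k(\hat{t}, \hat{x}(0)) - \tilde{\varphi}_k(t_k, x_0^k)] + [\tilde{\psi}_k(\hat{t}, \hat{y}(0)) - \tilde{\psi}_k(s_k, y_0^k)] \geq 0.
$$
The convergences (\ref{0607a}), (\ref{0607b}) of $(t_k, x_0^k, \nabla_x\tilde{\varphi}_k(t_k, x_0^k))$ and $(s_k, y_0^k, \nabla_x\tilde{\psi}_k(s_k, y_0^k))$, combined with the uniform continuity of those gradients, provide a uniform Lipschitz bound on a fixed neighborhood of $(\hat{t}, \hat{x}(0))$ and $(\hat{t}, \hat{y}(0))$, forcing $\alpha_k \to 0$. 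I would then pick $j_k \uparrow \infty$ with $1/j_k \geq \alpha_k$, so that $(\hat{t}, \hat{x}, \hat{t}, \hat{y})$ meets the approximate-maximum requirement (\ref{20210509a}) and may serve as the Borwein--Preiss starting point at level $(k, j_k)$. Property (i) of Lemma \ref{theoremleft} then yields
$$
\overline{\Upsilon}((\hat{t}, \hat{x}), (l_k, \check{x}^k)) + \overline{\Upsilon}((\hat{t}, \hat{y}), (s_k, \check{y}^k)) \leq 1/j_k \to 0,
$$
and the lower bound (\ref{0612d}) for $\overline{\Upsilon}$ converts this into the claimed $d_\infty$-convergence (\ref{4.23}).

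The main obstacle I foresee is the step $\alpha_k \to 0$: the approximants $\tilde{\varphi}_k, \tilde{\psi}_k$ produced by Theorem 8.3 of \cite{cran2} have bounded and uniformly continuous derivatives, but these bounds and moduli are not a priori uniform in $k$. The convergence of the first and second derivatives at $(t_k, x_0^k)$ and $(s_k, y_0^k)$ must be combined with the uniform-continuity modulus to extract a genuinely uniform Lipschitz estimate on a fixed neighborhood of $(\hat{t}, \hat{x}(0))$, after possibly passing to a subsequence of $k$; once such a bound is in hand the rest of the argument is routine.
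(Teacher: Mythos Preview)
Your approach is genuinely different from the paper's, and the gap you flag is real and, as far as I can see, not closable with the information Theorem~8.3 of \cite{cran2} actually provides. Your reduction
\[
\alpha_k \;\leq\; \bigl[\tilde{\varphi}_k(\hat t,\hat x(0))-\tilde{\varphi}_k(t_k,x_0^k)\bigr]
+\bigl[\tilde{\psi}_k(\hat t,\hat y(0))-\tilde{\psi}_k(s_k,y_0^k)\bigr]
+o(1)
\]
is essentially right (there is an extra term coming from $\tilde{w}_1^{\hat t,*}(t_k,x_0^k)-\tilde{w}_1^{\hat t,*}(\hat t,\hat x(0))$, but that one does go to $0$ by (\ref{0607a}) and Lemma~\ref{lemma4.30615}). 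The difficulty is the displayed difference: for each fixed $k$, $\tilde{\varphi}_k$ has bounded, uniformly continuous derivatives, but Theorem~8.3 of \cite{cran2} gives \emph{no} uniformity of these bounds or moduli across $k$. Convergence of $\nabla_x\tilde{\varphi}_k(t_k,x_0^k)$ alone controls the gradient only at one point; without a $k$-independent modulus you cannot transfer this to a uniform Lipschitz bound on a fixed neighbourhood, and $|(\hat t,\hat x(0))-(t_k,x_0^k)|\to 0$ is then of no help. Passing to a subsequence does not create uniformity that was never there.

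The paper avoids this entirely. Look again at the definitions (\ref{06091})--(\ref{06092}): the terms $-\varepsilon\,\overline{\Upsilon}(\hat t,\hat x,t,x)$ in $w_1$ and $-\varepsilon\,\overline{\Upsilon}(\hat t,\hat y,t,y)$ in $w_2$ are not needed for Theorem~\ref{theorem0513}; they are placed there precisely to make Lemma~\ref{lemma4.4} trivial. The paper's proof simply observes that
\[
w_1(l_k,\check x^k)+w_2(s_k,\check y^k)-\beta^{1/3}|\check x^k(0)-\check y^k(0)|^2
\;\leq\;\Psi_1(\hat t,\hat x,\hat y)
-\varepsilon\bigl[\overline{\Upsilon}(l_k,\check x^k,\hat t,\hat x)+\overline{\Upsilon}(s_k,\check y^k,\hat t,\hat y)\bigr]
+o(1),
\]
using (\ref{iii4}) and the continuity estimate (\ref{w12072}) to handle the time mismatch $l_k\neq s_k$. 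Since the left-hand side converges to $\Psi_1(\hat t,\hat x,\hat y)$ by (\ref{0608v1})--(\ref{0608vw1}) (this is exactly the \emph{value} convergence you already have from Theorem~\ref{theorem0513}), the bracketed $\overline{\Upsilon}$-terms are squeezed to $0$, and (\ref{0612d}) turns that into (\ref{4.23}). No control on the auxiliary test functions $\tilde{\varphi}_k,\tilde{\psi}_k$ beyond what Theorem~\ref{theorem0513} already delivers is required.

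In short: the localising $\varepsilon\,\overline{\Upsilon}$-penalties in $w_1,w_2$ were engineered so that value convergence implies path convergence; you are trying to reprove path convergence from scratch by re-entering the Borwein--Preiss construction, and that route needs uniform-in-$k$ regularity of the Crandall--Ishii approximants that is simply not available.
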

\par
   {\bf  Proof  }. \ \
 Without loss of generality, we may assume $s_{k}\leq l_{k}$, by  (\ref{up}), (\ref{w12072}), (\ref{iii4}) and the definitions of $w_1$ and $w_2$, we have that
\begin{eqnarray}\label{06101}
                      &&w_1(l_{k},\check{x}^{k})+w_2(s_{k},\check{y}^{k})
               -\beta^{\frac{1}{3}} |\check{x}^{k}(0)-\check{y}^{k}(0)|^2\nonumber\\
                      &\leq&\Psi_1(l_{k},\check{x}^{k},\check{y}^k_{l_k-s_k})  -W_2(\check{y}^{k})
                      +W_2(\check{y}^k_{l_k-s_k})
                       -\varepsilon[\overline{\Upsilon}(l_{k},\check{x}^{k},\hat{t},\hat{x})
+\overline{\Upsilon}(s_k,\check{y}^k,\hat{t},\hat{y})
]\nonumber\\
                       &\leq&\Psi_1(\hat{t},\hat{x},\hat{y})
                    +\hat{C}(1+|\check{y}^k|_C)((l_{k}- s_{k})+(l_{k}- s_{k})^{\frac{1}{2}}+1-e^{-\lambda(l_{k}- s_{k})})
                    \nonumber\\
                       &&
                      -\varepsilon[\overline{\Upsilon}(l_{k},\check{x}^{k},\hat{t},\hat{x})
+\overline{\Upsilon}(s_k,\check{y}^k,\hat{t},\hat{y})
].
\end{eqnarray}
By (\ref{0608vw1}), $w_2(s_k,\check{y}^k)\rightarrow w_2(\hat{t},\hat{y})$ as $k\rightarrow\infty$. Then by  that $w_2$ satisfies condition (\ref{05131}),  
 there exists a constant  ${M}_4>0$  that is sufficiently  large   that
$$
 |\check{y}^k|_C\leq M_4,\ \mbox{for all} \ k>0.
$$
Letting $k\rightarrow\infty$ in (\ref{06101}), by (\ref{0608v1}) and (\ref{0608vw1}) we have that
\begin{eqnarray*}
                       \Psi_1(\hat{t},\hat{x},\hat{y})&=&
                       w_1(\hat{t},\hat{x})+w_2(\hat{t},\hat{y})
               -\beta^{\frac{1}{3}} |\hat{x}(0)-\hat{y}(0)|^2\\
                      &\leq& \Psi_1(\hat{t},\hat{x},\hat{y})
                            -\varepsilon\limsup_{k\rightarrow\infty}[\overline{\Upsilon}(l_{k},\check{x}^{k},\hat{t},\hat{x})
+\overline{\Upsilon}(s_k,\check{y}^k,\hat{t},\hat{y})
].
\end{eqnarray*}
Thus,
$$
\lim_{k\rightarrow\infty}[\overline{\Upsilon}(l_{k},\check{x}^{k},\hat{t},\hat{x})
+\overline{\Upsilon}(s_k,\check{y}^k,\hat{t},\hat{y})
]=0.
$$
Then by (\ref{s0}) we get
(\ref{4.23}) holds true.
  The proof is now complete. \ \ $\Box$

\appendix

 \section{ Existence and consistency for viscosity solutions.}

\setcounter{equation}{0}
\renewcommand{\theequation}{A.\arabic{equation}}

\par
  {\bf  Proof (of Lemma \ref{theoremvexist}) }. \ \
 First, let  $\varphi\in {\cal{A}}^+({t}, {x},V)$
                  with
                   $({t}, {x})\in \Lambda$, then  for fixed  $u\in U$,  by the DPP (Theorem \ref{theoremvalue}), we obtain the following result:
\begin{eqnarray*}
                 \varphi(t,x)&=& V(x)
                 \leq \int_{0}^{s}e^{-\lambda l}{\mathbb{E}}q(X^{x,u}_l,u)dl+e^{-\lambda s}{\mathbb{E}}V(X^{x,u}_s)\\
                 &\leq&\int_{0}^{s}e^{-\lambda l}{\mathbb{E}}q(X^{x,u}_l,u)dl+e^{-\lambda s}{\mathbb{E}}\varphi(t+s,X^{x,u}_s),\ \ s\geq 0.
\end{eqnarray*}
                    Thus,
  \begin{eqnarray*}
                 0  \leq\frac{1}{s}\int_{0}^{s}e^{-\lambda l}{\mathbb{E}}q(X^{x,u}_l,u)dl
                            +\frac{1}{s}{[}e^{-{\lambda} s}{\mathbb{E}}\varphi(t+s,X^{x,u}_s)-\varphi(t,x){]}.
\end{eqnarray*}
             Now,  applying   functional It\^{o} formula (\ref{statesop0})  to
                         $e^{-\lambda s}\varphi(t+s,X^{x,u}_{s})$,  we have that
  \begin{eqnarray*}
                              0\leq\frac{1}{s}\int_{0}^{s}e^{-\lambda l}[{{\mathbb{E}}}q(X^{x,u}_l,u)-\lambda\mathbb{E}\varphi(t+l,X^{x,u}_l)
                             +\mathbb{E}({\cal{L}}\varphi)(t+l,X^{x,u}_l,u)]dl,
\end{eqnarray*}
where, for every $(s,x,u)\in  \Lambda^t\times U$ and $\varphi\in C_p^{1,2}(\Lambda^t)$,
\begin{eqnarray*}
                       ({\cal{L}}{\varphi})(s,x,u)
                       =\partial_t\varphi(s,x)+
                                         (\partial_x {\varphi}(s,x),b(x,u))_{\mathbb{R}^d}+\frac{1}{2}\mbox{tr}[\partial_{xx}{\varphi}(s,x)\sigma(x,u)\sigma^{\top}(x,u)].
\end{eqnarray*}
Letting $s\rightarrow0$,
\begin{eqnarray*}
                              0\leq q(x,u)-\lambda V(x)
                             +({\cal{L}}{\varphi})(t,x,u).
\end{eqnarray*}
Taking the infimum over $u\in U$, we see that $V$ is a viscosity subsolution of (\ref{hjb1}).
 \par
 Let  $\varphi\in {\cal{A}}^-({t}, {x},V)$
                  with
                   $({t}, {x})\in \Lambda$, then any $\varepsilon>0$ and   $s>0$,  by the DPP (Theorem \ref{theoremvalue}), one can find a control    ${u}^{\varepsilon}(\cdot)\equiv u^{{\varepsilon},s}(\cdot)\in {\cal{U}}_0$ such
   that the following result holds:
 \begin{eqnarray*}\label{jiajiajia}
\varepsilon s &\geq& \int_{0}^{s}e^{-\lambda l}{{\mathbb{E}}}q(X^{x,{u}^{\varepsilon}}_l,{u}^{\varepsilon}(l))dl
                   +e^{-\lambda s}{{\mathbb{E}}}V(X^{x,{u}^{\varepsilon}}_s)- V(x)\nonumber\\
                   &\geq&\int_{0}^{s}e^{-\lambda l}{{\mathbb{E}}}q(X^{x,{u}^{\varepsilon}}_l,{u}^{\varepsilon}(l))dl
                   -e^{-\lambda s}{{\mathbb{E}}}\varphi(t+s,X^{x,{u}^{\varepsilon}}_s)+\varphi(t,x).
\end{eqnarray*}
Applying also (\ref{statesop0}) to
                         $e^{-\lambda s}\varphi(t+s,X^{x,{{u}^{\varepsilon}}}_{s})$,
 we have that
  \begin{eqnarray*}\label{bsde4.2144}
                            \varepsilon &\geq&\frac{1}{s}\int_{0}^{s}e^{-\lambda l}[{{\mathbb{E}}}q(X^{x,{u}^{\varepsilon}}_l,{u}^{\varepsilon}(l))+\lambda \mathbb{E}\varphi(t+l,X^{x,{u}^{\varepsilon}}_l)
                             -\mathbb{E}({\cal{L}}\varphi)(t+l,X^{x,{u}^{\varepsilon}}_l,{u}^{\varepsilon}(l))]dl\nonumber\\
                             &=&-\lambda V(x)+\frac{1}{s}\int_{0}^{s}e^{-\lambda l}[{{\mathbb{E}}}q(x,{u}^{\varepsilon}(l)-\mathbb{E}({\cal{L}}\varphi)(t+l,x,{u}^{\varepsilon}(l))]dl+o(1)\nonumber\\
                             &\geq&-\lambda V(x)+[-\partial_t \varphi(t,x)
                           +{\mathbf{H}}(x,-\partial_x\varphi(t,x),
                           -\partial_{xx}\varphi(t,x))]\frac{1}{s}\int_{0}^{s}e^{-\lambda l}dl+o(1).
\end{eqnarray*}
Letting $s\rightarrow0^+$,
 we obtain the following inequality:
\begin{eqnarray*}
                       \varepsilon\geq -\lambda V(x)-\partial_t\varphi(x)+{\mathbf{H}}(x,-\partial_x\varphi(t,x),
                           -\partial_{xx}\varphi(t,x)).
\end{eqnarray*}
By the arbitrariness of $\varepsilon$, we show
 $V$ is  a viscosity subsolution to (\ref{hjb1}). This step completes the proof.
 \ \ $\Box$
\par
  {\bf  Proof (of Lemma \ref{theorem3.2}) }. \ \
    Assume $v$ is a viscosity solution. For any $x\in {\cal{C}}_0$, since $v\in C_p^{1,2}({\cal{C}}_0)$, by definition of viscosity solutions we see that $$
-\lambda v(x)+\partial_tv(x)+{\mathbf{H}}(x,\partial_xv(x),\partial_{xx}v(x))= 0.
$$
On the other hand, assume $v$ is a  classical solution. For every $({t}, {x})\in \Lambda$, let  $\varphi\in {\cal{A}}^+({t}, {x},v)$. For every $\alpha\in \mathbb{R}^d$ and $\gamma\in \mathbb{R}^{d\times n}$, let $\tau=t$, $\vartheta(\cdot)\equiv\alpha$ and $\varpi(\cdot)\equiv\gamma$  in (\ref{formular1}), applying functional formula (\ref{statesop0}) and noticing that $v(x)-\varphi(t,x)=0$, we have, for every $\delta>0$,
 \begin{eqnarray}\label{1224}
                            0\leq\mathbb{E}(\varphi-v)(X_{t+\delta})
                            &=&\mathbb{E}\int^{t+\delta}_{t}[\partial_t\varphi(l,X_l)-\partial_tv(X_l)
                 +(\partial_x\varphi(l,X_l)-\partial_xv(X_l),\alpha)_{\mathbb{R}^d}]dl\nonumber\\
                 &&+\mathbb{E}\int^{t+\delta}_{t}\frac{1}{2}\mbox{tr}((\partial_{xx}\varphi(l,X_l)-\partial_{xx}v(X_l))\gamma\gamma^\top)dl\nonumber\\
                 &=&
                           \mathbb{E}\int^{t+\delta}_{t}\widetilde{{\mathcal{H}}}(l,X_l)
                              dl,
\end{eqnarray}
where $$\widetilde{{\mathcal{H}}}(s,y)=\partial_t\varphi(s,y)-\partial_tv(y)
                 +(\partial_x\varphi(s,y)-\partial_xv(y),\alpha)_{\mathbb{R}^d}
                 +\frac{1}{2}\mbox{tr}((\partial_{xx}\varphi(s,y)-\partial_{xx}v(y))\gamma\gamma^\top), \ \ (s,y)\in  \Lambda. $$
Letting $\delta\rightarrow0$,
\begin{eqnarray}\label{0713a}
\widetilde{{\mathcal{H}}}(t,x)\geq0.
\end{eqnarray}
Let $\gamma=\mathbf{0}$, by the arbitrariness of  $\alpha$,
$$
\partial_t\varphi(t,x)\geq\partial_tv(x), \ \ \partial_x\varphi(t,x)=\partial_xv(x). 
$$
For every $u\in U$, let $\gamma=\sigma(x,u)$ in (\ref{0713a}),
$$
\partial_t\varphi(t,x)
                 +\frac{1}{2}\mbox{tr}(\partial_{xx}\varphi(t,x)\sigma(x,u)\sigma(x,u)^\top)
                 \geq \partial_tv(x)+\frac{1}{2}\mbox{tr}(\partial_{xx}v(x)\sigma(x,u)\sigma^\top(x,u)).
$$
Noting that $\varphi(t,x)=v(x)$ and $\partial_x\varphi(t,x)=\partial_xv(x)$, we show that
\begin{eqnarray*}
 &&-\lambda\varphi(t,x)+\partial_t\varphi(t,x)+(\partial_x\varphi(t,x),b(x,u))_{\mathbb{R}^d}
                 +\frac{1}{2}\mbox{tr}(\partial_{xx}\varphi(t,x)\sigma(x,u)\sigma^\top(x,u))+q(x,u)\\
                 &\geq& -\lambda v(x)+\partial_tv(x)+(\partial_xv(x),b(x,u))_{\mathbb{R}^d}
                 +\frac{1}{2}\mbox{tr}(\partial_{xx}v(x)\sigma(x,u)\sigma^\top(x,u))+q(x,u),
\end{eqnarray*}
Taking the infimum over $u\in U$, we see that
\begin{eqnarray*}
 -\lambda\varphi(t,x)+\partial_t\varphi(t,x)+{\mathbf{H}}(x,\partial_x\varphi(t,x),\partial_{xx}\varphi(t,x))
                 \geq -\lambda v(x)+\partial_tv(x)+{\mathbf{H}}(x,\partial_xv(x),\partial_{xx}v(x)).
\end{eqnarray*}
Note that $-\lambda v(x)+\partial_tv(x)
                           +{\mathbf{H}}(x,\partial_xv(x),\partial_{xx}v(x))=0$. Thus,
\begin{eqnarray*}\label{12241}
                            -\lambda\varphi(t,x)+\partial_t\varphi(t,x)+{\mathbf{H}}(x,\partial_x\varphi(t,x),\partial_{xx}\varphi(t,x))\geq0.
\end{eqnarray*}
   We have that $v$ is a viscosity subsolution of (\ref{hjb1}). In a symmetric way, we show that $v$ is also a viscosity supersolution to equation (\ref{hjb1}). \ \ $\Box$


\begin{thebibliography}{99}
\bibitem{bor1}
 J. M. Borwein and Q. J. Zhu, Techniques of variational analysis, volume 20 of CMS
Books in Mathematics/Ouvrages de Math$\acute{e}$matiques de la SMC. Springer-Verlag,
New York, 2005.
\bibitem{car1}  G. Carlier, R. Tahraoui, Hamilton-Jacobi-Bellman equations for
              the optimal control of a state equation with memory,  ESAIM Control Optim. Calc. Var.,   16 (2010), 744-763.
 \bibitem{cotn0}
              R.  Cont  and  D.-A. Fourni\'e,  Change of variable formulas for non-anticipative functionals
               on path space, J. Funct.
          Anal., 259 (2010), 1043-1072.
 \bibitem{cotn1}
          R.   Cont  and D.-A.  Fourni\'e,  Functional It\^o calculus and stochastic integral
                representation of martingales, Ann. Probab., 41 (2013), 109-133.
 \bibitem{cra1}
          M. G. Crandall, and P. L. Lions, Viscosity solutions of Hamilton-Jacobi
           equations, Trans. Amer. Math. Soc., 277 (1983), 1-42.
\bibitem{cran2}
          M. G. Crandall, H. Ishii,  and P. L. Lions,
          User's guide to
          viscosity solutions of second order partial differential equations, Bull.  Amer. Math. Soc., 27. (1992), 1-67.
\bibitem{dupire1}
          B. Dupire,  Functional It\^o calculus,  Preprint. Available at papers.ssrn.com, (2009).
\bibitem{ekren0}
                 I. Ekren,   Viscosity solutions of obstacle problems for fully
                             nonlinear path-dependent PDEs, Stochastic Process. Appl.,  127 (2017), 3966-3996.
\bibitem{ekren1}
                 I. Ekren, C. Keller, N. Touzi  and J. Zhang,   On viscosity solutions of path dependent PDEs, Ann. Probab., 42 (2014), 204-236.
\bibitem{ekren3}
                  I. Ekren, N. Touzi and J. Zhang, Viscosity solutions of fully nonlinear parabolic path dependent PDEs: Part I,
                   Ann. Probab., 44 (2016), 1212-1253.
\bibitem{ekren4}
                 I. Ekren, N. Touzi and J. Zhang,  Viscosity solutions of fully nonlinear parabolic path dependent PDEs: Part II,
                  Ann. Probab., 44 (2016), 2507-2553.
\bibitem{els}
              I. Elsanousi, B. {\O}ksendal, A. Sulem,
              Some solvable stochastic control problems with delay, Stochastics Stochastics
              Rep., 71 (2000), 69-89.
\bibitem{fab1} G. Fabbri, F. Gozzi, A. \'{S}wi\c{e}ch, Stochastic Optimal Control in Infinite Dimension. Dynamic Programming and HJB
                   Equations, in: Probability Theory and Stochastic Modelling, volume 82, Springer, 2017.
\bibitem{fed}
           S. Federico, B. Goldys and F. Gozzi, HJB equations for the optimal control of differential equations with infinite delay and state constraints,
                  I: regularity of viscosity solutions, SIAM J. Control Optim., 48 (2010),
           no. 8, 4910-4937.
\bibitem{fed1}
            S. Federico, A stochastic control problem with delay arising in a pension fund model, Finance and Stochastics, 15 (2011), 421-459.
\bibitem{fle1}
             W. H. Fleming and H. M. Soner, Controlled Markov Processes and Viscosity  Solutions, volume 25, Springer Verlag, 2006.
\bibitem{gab}
           D. Gabay, M. Grasselli, Fair Demographic Risk Sharing in Defined Contribution Pension
           Funds systems, J. Econ. Dyn. Control, 36 (2012),  657-669.
\bibitem{gozz1} F. Gozzi, C. Marinelli, Stochastic optimal control of delay equations arising in advertising
                  models, SPDE and Applications, G. Da Prato \& L. Tubaro eds., VII, 133-148, 2006.
\bibitem{gozz2}  F. Gozzi, C. Marinelli,  S. Savin, On controlled linear diffusions with delay in a model of
                 optimal advertising under uncertainty with memory effects,  J. Optim. Theory Appl.,  142 (2009), 291-321.
\bibitem{gozz3}
           F. Gozzi, E. Rouy, A. \'{S}wi\c{e}ch, Second order
           Hamilton-Jacobi equations in Hilbert spaces and
           stochastic boundary control, SIAM J. Control Optim.,
           38 (2000), no. 2, 400-430.
\bibitem{kol} V. B. Kolmanovskii, L. E. Shaikhet, Control of systems with aftereffect,  American Mathematical Society, Translations of mathematical monograph, 157, 1996.
\bibitem{liojia} P. L. Lions, On the Hamilton-Jacobi-Bellman equations, Acta Appl. Math., 1 (1983), 17-41.
\bibitem{liojia1} P. L. Lions, Optimal control of diffusion processes and Hamilton-Jacobi-Bellman equations I: the dynamic programming principle and applications, Comm. Partial Differential Equations, 8 (1983), 1101-1174.
\bibitem{lio}
             P. L. Lions, Optimal control of diffusion processes and Hamilton-Jacobi-Bellman equations II: viscosity solutions and uniqueness, Comm. in partial differential equations, 8 (1983), 1229-1276.
\bibitem{lio1}
          P. L. Lions, Viscosity solutions of fully nonlinear
          second-order equations and optimal stochastic control in
          infinite dimensions. I. The case of bounded stochastic
          evolutions, Acta Math., 161 (1988), no. 3-4, 243-278.
\bibitem{lio2}
          P. L. Lions, Viscosity solutions of fully nonlinear
          second-order equations and optimal stochastic control in
          infinite dimensions. II. Optimal control of Zakai's equation. in: Stochastic
          partial differential equations and applications, II, eds.
          G. Da Prato, L. Tubaro, 147-170, Lecture Notes in
          Mathematics 1390, Springer, 1989.
\bibitem{lio3}
          P. L. Lions, Viscosity solutions of fully nonlinear
          second-order equations and optimal stochastic control in
          infinite dimensions. III. Uniqueness of viscosity
          solutions for general second-order equations, J. Funct.
          Anal., 86 (1989), no. 1, 1-18.
\bibitem{oks}
             B. {\O}ksendal, A. Sulem, A maximum principle for optimal control of stochastic systems with
delay with applications to finance, Optimal Control and PDE, Essays in Honour of Alain
Bensoussan, eds J.L. Menaldi, E. Rofman and A. Sulem, IOS Press, Amsterdam, pp. 64-79,
  2001.
  \bibitem{oks1}
   B. {\O}ksendal, A. Sulem, T. Zhang, Optimal control of stochastic delay equations and timeadvanced backward stochastic differential equations,
    Adv. in Appl. Probab., 43 (2011), no.
  2, 572-596.
\bibitem{ren} Z. Ren, Viscosity solutions of fully nonlinear elliptic path dependent partial differential equations,
              Ann. Appl. Probab., 26 (2016), 3381-3414.
\bibitem{ren1} Z. Ren, N. Touzi, and J. Zhang,
              Comparison of viscosity solutions of fully nonlinear
degenerate parabolic Path-dependent PDEs, SIAM  J. Math. Anal., 49 (2017), 4093-4116.
\bibitem{ren2}  Z. Ren,  and  M. Rosestolato,  Viscosity solutions of path-dependent PDEs with randomized time,
              SIAM  J. Math. Anal., 52 (2020), 1943-1979.
\bibitem{swi}
           A. \'{S}wi\c{e}ch, ``Unbounded" second order partial
           differential equations in infinite-dimensional Hilbert
           spaces, Comm. Partial Differential Equations 19, (1994),
           no. 11-12, 1999-2036.
 \bibitem{swi1}          A. \'{S}wi\c{e}ch, Viscosity solutions to HJB equations for boundary-boise and boundary-control problems, SIAM J. Control Optim., 58 (2020),  303-326.
 \bibitem{wei}
 F. Wei, K. Wang, The existence and uniqueness of the solution for stochastic functional differential equations with infinite delay, J. Math. Anal. Appl. 331 (2007), 516-531.
\bibitem{yong}
           J. Yong,  X. Zhou,  Stochastic Controls: Hamiltonian Systems and
          HJB Equations, Springer-Verlag, New York, 1999.
  \bibitem{zhou3} J. Zhou,  A class of delay optimal control problems  and viscosity
           solutions to
           associated
               Hamilton-Jacobi-Bellman equations, ESAIM Control Optim. Calc. Var.,  24 (2018),  639-676.
  \bibitem{zhou5} J. Zhou,  Viscosity solutions to second order path-dependent Hamilton-Jacobi-Bellman equations and applications, 	arXiv:2005.05309v2, 2021.

\end{thebibliography}
\end{document}